\newcommand{\newrefformat}[2]{}
\theoremstyle{plain}   
\newtheorem{thm}{Theorem}[section] 
\newtheorem{cor}{Corollary}[section]
\let\c@cor\c@thm\makeatother
\newtheorem{lemma}{Lemma}[section]
\let\c@lemma\c@thm\makeatother
\newtheorem{prop}{Proposition}[section]
\let\c@prop\c@thm\makeatother
\let\c@claim\c@thm\makeatother
\newtheorem{thrm}{Theorem}[section]
\let\c@thrm\c@thm\makeatother
\theoremstyle{definition}
\newtheorem{defn}{Definition}[section]
\let\c@defn\c@thm\makeatother
\newtheorem{const}{Construction}[section]
\let\c@const\c@thm\makeatother
\newtheorem{notn}{Notation}[section]
\let\c@notn\c@thm\makeatother
\newtheorem{outline}{Proof Outline}[section]
\let\c@outline\c@thm\makeatother
\newtheorem{recoll}{Recollection}[section]
\let\c@recoll\c@thm\makeatother
\theoremstyle{remark}
\newtheorem{rem}{Remark}[section]
\let\c@rem\c@thm\makeatother
\newtheorem{ex}{Example}[section]
\let\c@ex\c@thm\makeatother
\newtheorem{observation}{Observation}[section]
\let\c@observation\c@thm\makeatother
\let\c@equation\c@thm
\numberwithin{equation}{section}
\crefname{lemma}{Lemma}{Lemmas}
\crefname{thm}{Theorem}{Theorems}
\crefname{defn}{Definition}{Definitions}
\crefname{notn}{Notation}{Notations}
\crefname{const}{Construction}{Constructions}
\crefname{prop}{Proposition}{Propositions}
\crefname{rem}{Remark}{Remarks}
\crefname{cor}{Corollary}{Corollaries}
\crefname{equation}{Diagram}{Diagrams}
\crefname{ex}{Example}{Examples}
\crefname{observation}{Observation}{Observations}
\newcommand{\cC}{\mathcal{C}}
\newcommand{\cD}{\mathcal{D}}
\newcommand{\V}{\mathcal{V}}
\newcommand{\A}{\mathcal{A}}
\newcommand{\B}{\mathcal{B}}
\newcommand{\F}{\mathcal{F}}
\newcommand{\Spec}{\mathbf{Spec}}
\newcommand{\E}{\mathcal{E}}
\newcommand{\D}{\mathcal{D}}
\newcommand{\cG}{\mathcal{G}}
\newcommand{\sma}{\wedge}
\newcommand{\fto}{\xrightarrow}
\newcommand{\xto}{\xrightarrow}
\newcommand{\inv}{{-1}}
\newcommand{\op}{\text{op}}
\newcommand{\cat}{\mathcal{C}\hspace{-.1em}\mathit{at}}
\newcommand{\id}{\textrm{id}}
\DeclareMathOperator{\map}{map}
\newcommand{\wed}{\vee}
\newcommand{\abs}[1]{\left|{#1}\right|}
\newcommand{\diag}{\textit{diag}}
\newcommand{\ul}{\underline}
\newcommand{\ol}{\overline}
\newcommand{\wreath}{\mathord{\int}}
\newcommand{\inc}{\iota}
\newcommand{\cO}{\mathcal{O}}
\newcommand{\bM}{\mathbf{M}}
\newcommand{\bN}{\mathbf{N}}
\newcommand{\sdot}{{S^{()}_{\bullet,\dots,\bullet}}}
\newcommand{\mb}{\mathbf}
\newcommand{\sset}{\mathit{s}\set}
\newcommand{\ssset}{\mathit{ss}\set}
\newcommand{\scat}{\mathit{s}\cat}
\newcommand{\set}{\mathcal{S}\hspace{-.1em}\mathit{et}}
\newcommand{\mcsmc}{\mathbf{SMC}}
\newcommand{\mcwald}{\mathbf{Wald}}
\newcommand{\mcdewald}{\mathbf{Wald}_{\vee}}
\newcommand{\EMk}{\mathbb{K}_{\text{SMC}}}
\newcommand{\waldk}{{\mathbb{K}_{\text{Wald}}}}
\DeclareMathOperator{\Ar}{Ar}
\DeclareMathOperator{\Ob}{Ob}
\DeclareMathOperator{\Sq}{Sq}
\newcommand{\Ecat}{\E_*\text{-}\cat}
\newcommand{\Inj}{\textrm{Inj}}
\newcommand{\Gcat}{\cG_*\text{-}\cat}
\newcommand{\wecat}{\cat_{we}}
\newcommand{\Ewecat}{\E_*\text{-}\wecat}
\newcommand{\mcsmcwe}{\mcsmc_{we}}
\newcommand{\fps}[1]{\underline{{#1}}_*}
\newcommand{\hrto}{\hookrightarrow}
\newcommand{\hlto}{\hookleftarrow}
\title{A multiplicative comparison of Segal and Waldhausen K-Theory}
\author{Anna Marie Bohmann}
\address{
Department of Mathematics, 
Vanderbilt University,
Nashville, TN 37235 USA
}
\email{am.bohmann@vanderbilt.edu }
\author{Ang\'elica M. Osorno}
\address{
Department of Mathematics,
Reed College,
Portland, OR 97202 USA
}
\email{aosorno@reed.edu}
\begin{document}

\begin{abstract}
In this paper, we establish a multiplicative equivalence between two multiplicative algebraic $K$-theory constructions, Elmendorf and Mandell's version of Segal's $K$-theory and Blumberg and Mandell's version of Waldhausen's $S_\bullet$ construction.  This equivalence implies that the ring spectra, algebra spectra, and module spectra constructed via these two classical algebraic $K$-theory functors are equivalent as ring, algebra or module spectra, respectively.  It also allows for comparisions of spectrally enriched categories constructed via these definitions of $K$-theory.  As both the Elmendorf--Mandell and Blumberg--Mandell multiplicative versions of $K$-theory encode their multiplicativity in the language of multicategories, our main theorem is that there is multinatural transformation relating these two symmetric multifunctors that lifts the classical functor from Segal's to Waldhausen's construction.  Along the way, we provide a slight generalization of the Elmendorf--Mandell construction to symmetric monoidal categories.
\end{abstract}

\maketitle

\section{Introduction}

Algebraic $K$-theory is a powerful invariant that connects number theory, algebraic geometry, geometric topology and homotopy theory.  It has many incarnations, reflecting these varied mathematical uses, but all share the basic underlying idea of ``splitting'' some kind of ``sum'' operation, whether that be direct sum of vector bundles or of modules, disjoint union of spaces, or a more exotic structure.  The first definitions of algebraic $K$-theory were the constructions of the zeroth and first algebraic $K$-groups by Grothendieck and Bass--Schanuel in the late 50s and early 60s.

In the late 60s, Quillen gave the first formulation of higher $K$-groups.  The fundamental philosophy behind his formulation is that higher algebraic $K$-theory should arise as the higher homotopy groups of a space, or more precisely, of an \emph{infinite loop space}: a space with extra structure making it equivalent to a \emph{spectrum}.   Thus algebraic $K$-theory creates interesting invariants by creating spectra.   Hence, from a homotopy theory perspective, algebraic $K$-theory provides a tool for constructing spectra.  From a more geometric or number theoretic point of view, spectra are a tool for constructing rich groups of invariants. These ideas put $K$-theory at the center of a fruitful mathematical symbiosis.

After Quillen's initial work, the  70s saw a flourishing of ``machines'' for building $K$-theory spectra out of algebraic or categorical data.  Segal made good on this idea in ``Categories and cohomology theories,'' \cite{segal} which builds a $K$-theory spectrum from any category with a symmetric monoidal product.  Contemporaneously, May's operadic technology provided another method for building $K$-theory spectra from suitable categorical input.  Slightly later, in the early 80s, Waldhausen produced another method for constructing $K$-theory spectra from a very flexible form of categorical input data, now known as \emph{Waldhausen categories.}  Additionally, Waldhausen provided a direct comparison of his construction with that of Segal.

The May--Thomason theorem \cite{maythomason} tells us that all of the May and Segal constructions of spectra from categorical data are equivalent:  May and Thomason provide a way to compare the input data for these constructions and prove that any possible way of building spectra from reasonable categorical data will produce equivalent output.  This is a fundamental result.  It gives homotopy theorists the flexibility to work with a wide variety of constructions depending on the situation at hand.

It has been clear from the origins of the subject that understanding algebraic structures on spectra is crucial to performing  research in homotopy theory.  The most basic such structure is some sort of multiplication ``up to homotopy,'' or better yet, ``up to coherent homotopy.''  There are now a number of ways to make sense of this idea.  For example, modern good categories of spectra have a well-defined and homotopically well-behaved smash product that allows one to perform algebraic operations on spectra as if they were classical rings in what is known as ``Brave New Algebra.''

At the time of the original algebraic $K$-theory constructions, researchers did not have access to these ``good'' categories of spectra, but in light of modern constructions of these categories, we can require more of algebraic $K$-theory. 
An obvious desideratum is to construct spectra from categorical data in such a way as to produce these kind of multiplicative structures from ``multiplicative'' types of categorical input.  Indeed, there is a large body of work on ``multiplicative infinite loop space machines'' of this sort \cite{maypairings,May82multiplicative,May09construction}.

In this paper, we focus on the Segal and Waldhausen constructions. Segal's algebraic $K$-theory construction is lifted to a multiplicative construction in \cite{EM2006}.  Waldhausen's construction is lifted to a multiplicative construction in \cite{BM2010}. Both lifts use the language of  symmetric multicategories and symmetric multifunctors to encode their multiplicativity.

While the May--Thomason theorem allows one to compare the underlying ``additive'' spectra of any such constructions, the techniques there do not extend to considering this new multiplicative structure.  Such a comparison is much to be desired: spectra may enjoy several multiplicative structures and it is important to be able to identify when we have the same one. In this paper, we show that two multiplicative $K$-theory constructions produce the same multiplicative spectra when given comparable input.  Our technique is to produce a multinatural transformation between the multifunctorial versions of the Segal and Waldhausen algebraic $K$-theory constructions which lifts Waldhausen's initial comparison of these constructions.  A multinatural transformation of this sort is automatically a comparison of symmetric multifunctors, meaning that our comparison is compatible with with the symmetry of the smash product in spectra.

The result here is not only important in developing brave new algebra structures on spectra.  Essentially the same structures arise in studying \emph{stable categories:} categories in which the morphisms form spectra.  In such a category, the composition pairing behaves like a multiplication.  Because we work with multicategories, our comparison applies directly to spectrally-enriched categories constructed from these different versions of $K$-theory.

This result is new but perhaps not unanticipated: it is expected that algebraic $K$-theory in all its different guises should be the same construction in the strongest possible sense, including the sum total of its structure. There are several recent results along this general line in an infinity-categorical context.  \begin{itemize}
\item Work of Blumberg--Gepner--Tabuada \cite{BGT2016} provides a uniqueness result for algebraic $K$-theory as a functor from the infinity category of idempotent-complete stable infinity categories that includes uniqueness of the multiplicative structure. 

\item Gepner--Groth--Nikolaus \cite{GGN2015} show that ``group completion'' in the sense of constructing a connective spectrum from an $E_\infty$-space is universal as a functor between symmetric monoidal infinity categories. This gives them universal multiplicative versions of $K$-theory as a functor from the infinity category of symmetric monoidal categories or the infinity category of symmetric monoidal infinity categories to the infinity category of spectra. 

\item Barwick \cite{BarwickMultiplicative} also provides a universal multiplicative structure on algebraic $K$-theory, in this context thought of as a functor from the infinity category of Waldhausen infinity categories to the infinity category of spectra.
\end{itemize}
Our result differs in two key ways from this work, and is not directly comparable.  First, by working with multicategories, we provide an on-the-nose comparison of multiplicative $K$-theory constructions.  This means our comparison has the ability to identify strictly commutative ring spectra as well as make other strict comparisons; this includes composition in spectral categories as mentioned above.  This ability to make strict comparisons is necessary for ``change of enrichment'' type results.

 Second, we focus on $K$-theory as a construction on Waldhausen categories, rather than any of the classes of infinity categories.  Since the classical Waldhausen and Segal constructions apply only to more specific types of input, their uniqueness is not addressed by any of the uniqueness or universality results above. The present paper provides this kind of multiplicative comparison between the classical versions of algebraic $K$-theory.

\subsection{Statement of results and structure of this paper}

The main result of this work is the construction of a multinatural transformation between the multifunctorial versions of Segal's and Waldhausen's algebraic $K$-theory functors. Since these two constructions traditionally start with different types of input categories---symmetric monoidal categories in the first instance and Waldhausen categories in the second---we must first establish a symmetric multifunctor comparing these inputs.   Essentially, our main result is to establish the following diagram:
\begin{thrm}[See Theorem \ref{thrm:nattransmultifunctors}]\label{intromainthrm} There is a natural transformation of symmetric multifunctors making the following diagram commute:
\[
\xymatrix{\mcdewald \ar[dr]_K\ar[rr] \drrtwocell<\omit>{<-1>}&&\mcsmc\ar[dl]^K\\
&\Spec&}
\]
\end{thrm}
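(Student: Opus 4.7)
The plan has three stages. For the top symmetric multifunctor $\mcdewald \to \mcsmc$, I would take the forgetful assignment sending a Waldhausen category with coproducts to its underlying symmetric monoidal category with monoidal product $\vee$. A multi-exact functor in $\mcdewald$ preserves coproducts in each variable, since coproducts are iterated pushouts along cofibrations from the zero object and multi-exact functors preserve such diagrams. Hence a $k$-ary morphism in $\mcdewald$ gives a $k$-ary bilinear functor in $\mcsmc$, so the forgetful assignment is a symmetric multifunctor with essentially no extra work beyond observing that the coherence isomorphisms for $\vee$ are preserved on the nose.

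For each Waldhausen category with coproducts $C$ I would define the component of the natural transformation at $C$ to be Waldhausen's classical comparison map $K^{Segal}(C) \to K^{Wald}(C)$. At the categorical input level this is the familiar functor $C^n \to S_n C$ sending a tuple $(c_1, \dots, c_n)$ to the filtered object
\[
0 \rightarrowtail c_1 \rightarrowtail c_1 \vee c_2 \rightarrowtail \cdots \rightarrowtail c_1 \vee \cdots \vee c_n
\]
with chosen subquotients the later coordinates, where the cofibrations are the canonical coproduct inclusions. Assembling over $n$ gives the required map of underlying $\Gamma$-space / simplicial data and hence a map of spectra, and naturality in $1$-ary morphisms (exact functors that preserve coproducts) is immediate from the functoriality of $\vee$.

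The main obstacle, and the bulk of the work, is verifying multinaturality. Given a $k$-ary morphism $F \colon (C_1, \dots, C_k) \to D$ in $\mcdewald$, encoded as a multi-exact functor, I must check that the square of spectra obtained by applying $K^{Segal}$ and $K^{Wald}$ to $F$ together with the components of the natural transformation at the $C_i$ and at $D$ commutes strictly, not merely up to homotopy. Unwinding the multicategory structure on spectra, this is a compatibility statement between the filtration-assembly functor $(c_1, \dots, c_n) \mapsto (c_1 \rightarrowtail c_1 \vee c_2 \rightarrowtail \cdots)$ and the smash-type pairings on $\Gamma$-spaces or on $S_\bullet$ constructions that Elmendorf--Mandell and Blumberg--Mandell use to encode multilinear maps. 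The central combinatorial identity is that applying $F$ to tuples of such assembled filtrations and then taking the total filtration agrees with the total filtration of the pointwise $F$-combined tuples, which holds because $F$ strictly preserves coproducts in each variable.

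I would reduce multinaturality to a levelwise statement about the underlying categorical data before spectrification: on each $\Gamma$-object, the diagram of categories obtained by combining $F$, the assembly functor, and the relevant smash-product structure commutes as a diagram of functors. The crucial observation that should make this go through is that both $K$-theories build their multilinear pairings by essentially the same categorical mechanism (a wedge-then-assemble construction), and the strict preservation of $\vee$ by multi-exact functors forces these two mechanisms to align under the comparison map. I expect the bulk of the technical detail to lie in bookkeeping the symmetric multicategory structures of $\Gamma$-simplicial sets and of bisimplicial diagrams indexed by $S_\bullet$-type constructions, but no genuinely new homotopical input is needed beyond Waldhausen's original comparison and the strictness of the assembly construction.
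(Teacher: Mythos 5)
Your overall architecture matches the paper's: a chosen-wedge symmetric monoidal structure for the horizontal arrow, Waldhausen's flag construction for the components, and a verification that multiexactness makes the multinaturality square commute. But there is a genuine gap in the strictness claims your argument leans on. A $k$-exact functor $F$ does \emph{not} preserve the chosen wedges on the nose: $F(X\vee Y)$ and $F(X)\vee F(Y)$ are two different pushouts of the same span, so they agree only up to a canonical isomorphism $\delta_i$. Consequently the horizontal multifunctor must land in a multicategory whose multimorphisms are \emph{strong} (iso-coherent) multilinear functors rather than strict ones, and---more seriously---your justification of multinaturality (``the total filtration of the pointwise $F$-combined tuples agrees \dots because $F$ strictly preserves coproducts'') fails: with your component maps, the two composites around the square differ by these $\delta_i$'s, so the square commutes only up to natural isomorphism. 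That is not enough: a multinatural transformation requires literal equality of $k$-ary morphisms, and that strictness is the entire point of the theorem.

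The fix is bound up with a second imprecision: the Segal-type construction at level $n$ is not $\cC^{\times n}$ but a category of systems $\{C_{\langle S\rangle},\rho_{\langle S\rangle,i,T,U}\}$ recording an object for \emph{every} tuple of subsets together with gluing isomorphisms. The comparison functor must send such a system to the functor $\Ar[m_1,\dots,m_n]\to\cC$ whose value at $(i_1j_1,\dots,i_nj_n)$ is the \emph{stored} object $C_{\langle(i_1,j_1],\dots,(i_n,j_n]\rangle}$, with structure maps built from the $\rho$'s and the wedge inclusions and projections---not re-assembled as an iterated wedge of singletons. With that definition both composites in the multinaturality square literally produce the functor whose value is $F$ applied to the stored objects of the input systems, so they agree on the nose with no strict preservation of $\vee$ needed anywhere. (You would also need to reconcile the $\Gamma$-style indexing of the Segal construction with the multisimplicial indexing of the iterated $S_\bullet$ construction; the paper does this by precomposing with the simplicial circle and factoring both $K$-theories through a common intermediate multicategory of $\E_*$-categories, which is where the naturality and basepoint checks actually take place.)
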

Here $\mcsmc$ is the symmetric multicategory of symmetric monoidal categories with strict unit, discussed in  \cref{sect:multifunctorsmcKtheory}, and $\mcdewald$ is the symmetric multicategory of Waldhausen categories ``with choice of wedges''; this is essentially a choice of pairwise wedge products necessary to define the horizontal functor.  The precise details are in \cref{sec:waldtosymm}.

Elmendorf--Mandell \cite{EM2006} shows that ring structures on spectra, modules over ring spectra and algebras over ring spectra can be encoded as symmetric multifunctors from certain small symmetric multicategories to the symmetric multicategory of spectra.  Hence  \cref{intromainthrm} provides a comparison between spectral algebraic objects arising from these two constructions.  This is detailed in Corollaries \ref{cor:operadalg} and \ref{cor:multicategalgebra}.

In \cref{thrm:comparisonisequivalence}, we show that this natural transformation is an equivalence on Waldhausen categories with split cofibrations.  This is the classical condition under which Waldhausen \cite{Wald} provides an equivalence between Segal and Waldhausen $K$-theory spectra.

The structure of the paper is as follows.  In Section \ref{sec:spectra}  we give a quick overview of symmetric spectra.   In Section \ref{sect:overviewktheory}, we  provide the basic definitions of Waldhausen and Segal-style $K$-theory.   This is an ahistorical treatment: we use the Blumberg--Mandell definition of iterated Waldhausen $K$-theory from \cite{BM2010} and a generalization of the Elmendorf--Mandell version of $K$-theory \cite{EM2006} that allows for the symmetric monoidal category inputs.  Although the point of these models is that they are multifunctorial, at this point, we develop only the basic definitions and defer the symmetric multifunctor structure until Sections \ref{sect:multifunctorWaldKtheory} and \ref{sect:multifunctorsmcKtheory}.  In Section \ref{sect:multicategories}, we provide a brief introduction to symmetric multicategories and symmetric multifunctors, including enriched versions. Section \ref{sect:Ecat} introduces $\Ecat$, an auxiliary category used in proving the main result, and establishes its relationship to symmetric spectra. In Section \ref{sec:waldtosymm} we build the symmetric multifunctor that takes Waldhausen categories to symmetric monoidal categories.  Section \ref{sect:comparison} is devoted to the proof of the main theorem and Section \ref{sect:equivalence} discusses the question of when the transformation is an equivalence and gives corollaries summarizing some of the key consequences of the main results.

%

\subsection*{Acknowledgments} 
Many thanks to Clark Barwick, Andrew Blumberg, Tony Elmendorf, Lars Hesselholt, Mona Merling and Inna Zakharevich for interesting and helpful conversations. The authors also thank the anonymous referee for their very careful reading of this paper and for several clarifying suggestions.

It is a pleasure to acknowledge the support of several institutions that helped make this research possible. The first author was partially supported by NSF DMS-1710534.  The second author was partially supported by the Simons Foundation Grant No.~359449, the Woodrow Wilson
Career Enhancement Fellowship, and NSF grant DMS-1709302.  The first author also thanks the Isaac Newton Institute  for Mathematical Sciences for support and hospitality during the program ``Homotopy Harnessing Higher Structures'' when some of the work on this paper was undertaken.  This work was supported by  EPSRC grant numbers EP/K032208/1 and EP/R014604/1.

\section{Preliminaries on symmetric spectra}\label{sec:spectra}

In this paper, we use  symmetric spectra in simplicial sets \cite{HSS00Symmetric} as our model of spectra because they are the most natural target category for the two $K$-theory constructions. We briefly review some of the key elements of the theory of symmetric spectra and establish some notation we use in the sequel.  For further details, we refer the reader to \cite{HSS00Symmetric, MMSS01Model, SchwedeSymSpecBook}.

\begin{defn}\label{defn:simplicial_circle}
 The standard model for the circle as a based simplicial set is  $S^1=\Delta[1]/\partial\Delta[1]$. Thus, $S^1_n=\fps{n}=\{0,1,\dots,n\}$,  where $0$ is the basepoint, and for a map $\beta\colon [n]\to [m]$ in $\Delta$, the corresponding simplicial map $\beta^* \colon \fps{m} \to \fps{n}$ in $S^1$ sends $i$ to $j$ if $\beta(j-1)< i \leq \beta(j)$, and it sends $i$ to $0$ if there is no such $j$. 
\end{defn} 

This description of the maps $\beta^*$ can be deduced from the following observation.
\begin{rem}\label{rem:f_simplicial_circle}
Let $\mathcal{F}$ be the category with objects the finite pointed sets $\fps{n}$ as above, and pointed maps. The simplicial circle can be identified with the composite
\[\Delta^\op \longrightarrow \mathcal{F} \longrightarrow \set_*,\]
where the first functor is as described above, and the second functor is the inclusion into based sets. Under the identification $\mathcal{F}\cong \Gamma^\op$, this first functor is the canonical functor $\Delta^\op\to\Gamma^\op$ (see \cite[Remark 9.1]{MMO}).
\end{rem}

\begin{defn}\label{defn:symspec}
 A \emph{symmetric spectrum} $X$ consists of
 \begin{enumerate}
  \item a based simplicial set $X_n$ with a left $\Sigma_n$-action for each $n\geq 0$,
  \item maps of based simplicial sets $\sigma_n \colon X_n \sma S^1 \longrightarrow X_{n+1}$ for all $n\geq 0 $,
 \end{enumerate} 
 satisfying certain $\Sigma_n$-equivariance axioms as spelled out in \cite[Definition 1.2.2]{HSS00Symmetric}. 
\end{defn}

\begin{defn}\label{defn:specmor}
 A \emph{morphism} $f\colon X \longrightarrow Y$ of symmetric spectra consists of maps $f_n\colon X_n \longrightarrow Y_n$ of based simplicial sets that are $\Sigma_n$-equivariant, and compatible with the structure maps in the sense that the diagram
 \[ \xymatrix{ X_n \sma S^1 \ar[r]^-{\sigma_n} \ar[d]_{f_n \sma \id} & X_{n+1}\ar[d]^{f_{n+1}}\\
Y_n \sma S^1 \ar[r]_-{\sigma_n} & Y_{n+1}
 }
 \]
 commutes for all $n\geq 0$.
\end{defn}

We denote the category of symmetric spectra by $\Spec$. 

The category of symmetric spectra is tensored and enriched over the category of pointed simplicial sets. For a symmetric spectrum $X$ and a pointed simplicial set $K$, the symmetric spectrum $K\sma X$ is defined by
\[(K\sma X)_n=K \sma X_n,\]
with structure maps inherited from $X$. For symmetric spectra $X$ and $Y$, the mapping simplicial set $\map(X,Y)$ is defined as
\[\map(X,Y)_n=\Spec(\Delta[n]_+\sma X,Y),\]
with face and degeneracy maps induced by precomposition of the corresponding maps in $\Delta[-]$. Note that the 0-simplices can be naturally identified with the set of spectrum maps $X\longrightarrow Y$.

The category $\Spec$ has a symmetric monoidal structure given by the smash product. As we will not need the details of the construction here, we will instead concentrate on its universal property.

\begin{defn}\label{defn:bilinear} A \emph{bilinear map} $f\colon (X , Y) \longrightarrow Z$ of symmetric spectra consists of $\Sigma_p\times \Sigma_q$-equivariant maps of based simplicial sets 
\[f_{p,q}\colon X_p \sma Y_q \longrightarrow Z_{p+q},\]
for all $p,q,\geq 0$. These maps must be compatible with the structure maps, i.e., the diagrams
 \[ \xymatrix{ X_p \sma Y_q \sma S^1 \ar[r]^-{\id \sma \sigma_q} \ar[dd]_{f_{p,q} \sma \id} & X_p\sma Y_{q+1}\ar[dd]^{f_{p,q+1}}\\
 \\
Z_{p+q} \sma S^1 \ar[r]_-{\sigma_{p+q}} & Z_{p+q+1}
 } \qquad \quad
  \xymatrix{ X_p \sma  S^1 \sma Y_q  \ar[r]^-{\sigma_p \sma \id} \ar[d] & X_{p+1}\sma Y_{q}\ar[d]^{f_{p+1,q}}\\
X_p  \sma Y_q \sma S^1 \ar[d]_{f_{p,q} \sma \id}& Z_{p+1+q} \ar[d]\\
Z_{p+q} \sma S^1 \ar[r]_-{\sigma_{p+q}} & Z_{p+q+1}
 }
 \]
must commute for all $p,q\geq 0$. The unlabeled maps in the right diagram are given by the twist and by the action of the block permutation in $\Sigma_{p+q+1}$ that leaves the first $p$ elements fixed and swaps the block of the last $q$ elements with the $p+1$st element. One can similarly define $k$-linear maps, where the input is given by $k$-tuples of symmetric spectra.
\end{defn}

The smash product $X\sma Y$ comes with a bilinear map $(X,Y) \longrightarrow X \sma Y$, satisfying that for all $Z$, composition with this map induces a bijection
\[\Spec(X \sma Y, Z) \xrightarrow{\cong} \mathrm{Bilin}((X,Y),Z).\]
There is a similar bijection for iterated smash products and $k$-linear maps.

We also require symmetric spectra in simplicial categories, or more precisely, in based simplicial categories. As explained in \cite[Definition 7.1]{EM2006}, one can define symmetric spectra in simplicial objects in $\V_*$, where $\V$ is any bicomplete cartesian closed category. In what follows we specialize to the case $\V=\cat$, the category of small categories. Let $\ast$ denote the trivial category with one object and one morphism, which is final in $\cat$.  

\begin{defn}\label{defn:cat*}
A \emph{based category} consists of a category $\cC$ together with a functor $\ast \to\cC$. Note that this amounts to choosing a base object $\ast \in \cC$.
A \emph{based functor} $F\colon \cC \to \cD$ is a functor that preserves base objects.  A \emph{based natural transformation} $\phi$ between based functors $F,G\colon \cC\to \cD$ must satisfy the condition that the component $\phi_{\ast}$ at the base object $\ast\in\cC$ is the identity map $\id_\ast$ on the base object $\ast\in\cD$.
\end{defn}

Let $\cat_*$ denote the category of based small categories and based functors. We will also denote by $\cat_*$ the 2-category that includes based natural transformations.

The category $\cat_*$ has a coproduct $\vee$ and smash product $\sma$ defined analogously to the wedge product and smash product of based spaces.  See \cite[Construction 4.19]{EM2} for the construction of smash products in the general setting of based objects in a symmetric monoidal category $\V$. 

 Let $\scat_*$ denote the category of simplicial objects in based categories; that is, of functors $\Delta^\op\to \cat_*$.  This category is tensored over based simplicial sets, with the tensor $\cC\sma K$ of $\cC\in \scat_*$ and a based simplicial set $K$ given by 
\[ (\cC\sma K)_n=\bigvee_{K_n\setminus{*}}\cC_n.\]
We can then adapt verbatim \cref{defn:symspec,defn:specmor,defn:bilinear} to define symmetric spectra in $\scat_*$ and their maps.  The category of symmetric spectra in $\scat_*$ is denoted $\Spec(\scat)$.

Since $\cat_*$ is a 2-category, and hence so is $\scat_*$, we can further define 2-cells between morphisms of symmetric spectra, and more generally, between multilinear maps. 

\begin{defn} Let $f,g\colon \cC \to \cD$ be morphisms of symmetric spectra in $\scat_*$. A 2-cell $\alpha$ between them consists of based natural transformations $\alpha_n \colon \cC_n \to \cD_n$ in $\scat_*$ such that 
 \[ \begin{matrix}\xymatrixrowsep{30pt}\xymatrix{ \cC_n \sma S^1 \ar[r]^-{\sigma_n} \dtwocell<7>^{\ \quad f_n\sma \id}_{g_n\sma \id\ \quad}{\alpha_n \sma \id} & \cC_{n+1}\ar[d]^{f_{n+1}}\\
\cD_n \sma S^1 \ar[r]_-{\sigma_n} & \cD_{n+1}
 }\end{matrix} \quad = \quad \begin{matrix}\xymatrix{ \cC_n \sma S^1 \ar[r]^-{\sigma_n} \ar[d]_{f_n \sma \id} & \cC_{n+1}\dtwocell<7>^{\quad f_{n+1}}_{g_{n+1}\quad}{\alpha_{n+1}}\\
\cD_n \sma S^1 \ar[r]_-{\sigma_n} & \cD_{n+1}
 }\end{matrix}
 \]
\end{defn}
We thus see that $\Spec(\scat)$ is enriched over $\cat$.

As a right adjoint, the nerve functor $N\colon \cat \to \sset$ preserves products, and it extends to a lax symmetric monoidal functor $N\colon (\cat_*, \sma) \to (\sset_*,\sma)$. We can further consider the composite
\[\scat_* \fto{N} \ssset_* \fto{diag} \sset_*\]
obtained by applying the nerve levelwise to a based simplicial category to obtain a bisimplicial set, and then taking the diagonal (or geometric realization) to obtain a simplicial set. By abuse of notation, we also denote this functor by $N$.
\begin{prop}
 The nerve functor $N\colon \scat_* \to \sset_*$ is lax symmetric monoidal when considering the monoidal structure given by applying $\sma$ levelwise. We thus obtain a lax symmetric monoidal functor (of categories enriched in $\set$)
\[N\colon \Spec(\scat) \rightarrow \Spec.\]
\end{prop}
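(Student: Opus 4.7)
The plan reduces everything to the classical fact that $N\colon \cat \to \sset$ preserves products (being a right adjoint), which the excerpt already records, and then builds the lax symmetric monoidal structure upward in three stages: first for pointed categories under $\sma$, then for simplicial objects in $\cat_*$, and finally for symmetric spectra by a formal extension.

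First I would handle $N\colon (\cat_*,\sma) \to (\sset_*,\sma)$: the smash product in $\cat_*$ is constructed from $\cC \times \cD$ by quotienting out $\cC \vee \cD$, and analogously in $\sset_*$, so the comparison map $N\cC \sma N\cD \to N(\cC \sma \cD)$ is assembled from the isomorphism $N\cC \times N\cD \cong N(\cC\times \cD)$, the canonical map $N\cC \vee N\cD \to N(\cC \vee \cD)$, and the natural map from a quotient of nerves to the nerve of the quotient. The unit map is the equality $N(S^0)=S^0$. Associativity, symmetry, and unitality of these maps reduce to the corresponding strict statements for products together with naturality of the quotient. To extend this to $\scat_*$, apply $N$ levelwise to obtain a lax symmetric monoidal functor $\scat_* \to \ssset_*$ with the pointwise smash product on the target, and then post-compose with $\mathrm{diag}\colon \ssset_* \to \sset_*$, which is strong symmetric monoidal because the pointed bisimplicial smash is defined levelwise and hence $(\mathrm{diag}(X\sma Y))_n = X_{n,n}\sma Y_{n,n} = (\mathrm{diag}X \sma \mathrm{diag}Y)_n$. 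Composing a lax with a strong monoidal functor yields the first half of the proposition.

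For the extension to symmetric spectra, define $N$ levelwise on objects, with structure maps $N(X)_n \sma S^1 \to N(X_n \sma S^1) \to N(X)_{n+1}$ obtained from the lax monoidal comparison and the original structure maps of $X$ (using the identification $S^1 \cong N(S^1)$ obtained by viewing a simplicial set as a discrete simplicial category). To define the lax monoidal comparison on spectra I would build a bilinear map $(N(X),N(Y)) \to N(X\sma Y)$ via the composites
\[
N(X)_p \sma N(Y)_q \longrightarrow N(X_p \sma Y_q) \longrightarrow N((X\sma Y)_{p+q}),
\]
where the first arrow is the lax comparison and the second is $N$ applied to the canonical bilinear structure map on the smash product of spectra; the universal property of $\sma$ in $\Spec$ then produces $N(X)\sma N(Y)\to N(X\sma Y)$. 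The main obstacle, and the bulk of the verification, is checking that these maps satisfy the coherence axioms of a lax symmetric monoidal functor at the spectrum level. These axioms unwind via the universal property of the spectrum smash product to coherences on bilinear maps, which in turn reduce levelwise to the analogous axioms for $N\colon \scat_* \to \sset_*$ already established; the $\Sigma_n$- and $\Sigma_p\times\Sigma_q$-equivariance and compatibility with structure maps are automatic from the naturality of the lax comparison in $\scat_*$.
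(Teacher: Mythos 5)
Your argument is correct and follows the same route the paper takes (the paper in fact states this proposition without proof, relying on the preceding observation that $N\colon\cat\to\sset$ preserves products and hence extends to a lax symmetric monoidal functor on $(\cat_*,\sma)$, followed by composition with $\diag$ and the formal levelwise extension to symmetric spectra via the universal property of $\sma$). Your write-up simply makes explicit the standard verifications---descending the product comparison to the smash quotient, strong monoidality of $\diag$, and the reduction of the spectrum-level coherences to bilinear maps---all of which are sound.
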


Lax monoidality of $N$ further allows us to use the nerve to change enrichments.  By applying $N$ at the level of  morphisms, we obtain a simplicially enriched category from any category enriched in categories.  Specifically, we can view $\Spec(\scat)$ as enriched in simplicial sets by applying $N$ to its categorical enrichment.  One then checks that the following proposition holds.

\begin{prop}\label{prop:changetosimpenrich}
The functor $N\colon \Spec(\scat)\to \Spec$ is a lax symmetric monoidal functor of categories enriched in simplicial sets when $\Spec(\scat)$ is given a simplicial enrichment via the nerve as above.
\end{prop}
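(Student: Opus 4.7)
The plan is to unpack what simplicial enrichment means on each side and then check that the structure already established by the previous proposition, together with the functoriality of $N$ on 2-cells, gives the required coherence.

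First I would make explicit the simplicial enrichment on $\Spec(\scat)$. The category $\Spec(\scat)$ is enriched over $\cat$, with the hom-category $\Spec(\scat)(\cC,\cD)$ having morphisms of symmetric spectra as objects and 2-cells as morphisms. Applying $N$ to each hom-category produces a simplicial set $N\Spec(\scat)(\cC,\cD)$; since $N\colon\cat\to\sset$ is lax symmetric monoidal (in fact, strong monoidal for the cartesian products), the induced composition maps $N\Spec(\scat)(\cD,\E)\times N\Spec(\scat)(\cC,\cD)\to N\Spec(\scat)(\cC,\E)$ are unital and associative, so $\Spec(\scat)$ inherits a bona fide $\sset$-enrichment.

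Next, I would construct the enriched functor structure. On $0$-simplices, $N$ sends a morphism of symmetric spectra in $\scat_*$ to its nerve, which by the previous proposition is a morphism of symmetric spectra; on $1$-simplices, a $2$-cell $\alpha\colon f\Rightarrow g$ is taken to a $1$-simplex in $\map(N\cC,N\cD)$, i.e.\ a morphism $\Delta[1]_+\sma N\cC\to N\cD$ of symmetric spectra. This uses the standard correspondence $N(\cat(\cC_n,\cD_n))\cong \sset(N\cC_n,N\cD_n)$ followed by the adjunction defining $\map$, together with the compatibility of $\alpha$ with the structure maps as in the defining diagram of a $2$-cell, which translates directly into compatibility of the resulting maps $\Delta[1]_+\sma N\cC_n\sma S^1\to N\cD_{n+1}$. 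Extending this construction to higher simplices of $N\Spec(\scat)(\cC,\cD)$ (which are strings of $2$-cells) is routine by the same adjunction, and functoriality with respect to composition follows from the monoidality of $N$.

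Third, I would promote the lax symmetric monoidal structure maps from the previous proposition to simplicial natural transformations. The previous proposition supplies maps $N\cC\sma N\cD\to N(\cC\sma \cD)$ of symmetric spectra, natural in $(\cC,\cD)$ as ordinary functors. To upgrade naturality to the enriched setting I would show that for a pair of $2$-cells $(\alpha,\beta)$ in $\Spec(\scat)$, the two resulting $1$-simplices of $\map(N\cC\sma N\cD, N(\cC'\sma\cD'))$ agree. This is a levelwise statement for each spectrum degree, where it reduces to the fact that the lax monoidal structure maps of $N\colon(\cat_*,\sma)\to(\sset_*,\sma)$ are $2$-natural in the categorical inputs; this in turn is because the smash product of based categories and based simplicial sets is defined from finite products and quotients, and $N$ is strong monoidal for products. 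The coherence axioms (associativity, unit, symmetry) for the lax monoidal structure are already known from the previous proposition on objects, and the unique simplicial lift is then forced.

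The main obstacle, and the only place that requires genuine care, is the third step: verifying that the structure maps $N\cC\sma N\cD\to N(\cC\sma\cD)$ constructed in the previous proposition are natural with respect to $2$-cells and not merely $1$-morphisms. All other steps are formal consequences of the $\cat$-enrichment of $\Spec(\scat)$, the definition of $\map$, and the strong monoidality of $N$ on cartesian products.
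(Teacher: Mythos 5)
Your proposal is correct and takes essentially the same route as the paper, which omits the verification entirely (``One then checks that the following proposition holds''): your three steps—defining the enrichment by applying $N$ to hom-categories, sending $2$-cells to $1$-simplices of $\map(N\cC,N\cD)$ via the product-preservation of the nerve, and checking $2$-naturality of the lax structure maps—are precisely that check, and you correctly isolate the $2$-naturality as the only nontrivial point. As a small confirmation, the identification $N(\cat_*(\cC_n,\cD_n))\cong\sset_*(N\cC_n,N\cD_n)$ you use is indeed an isomorphism, since the nerve is fully faithful.
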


This proposition allows us to work with categorical enrichment throughout this paper. Indeed, while the $K$-theory constructions of \cref{sect:overviewktheory} typically land in $\Spec$, we define them as categorically-enriched constructions landing in $\Spec(\scat)$. By first changing to simplicial enrichment by applying $N$ at the level of morphisms and then applying \cref{prop:changetosimpenrich}, one can obtain the more familiar simplicially-enriched constructions landing in $\Spec$.

\section{The $K$-theory constructions}\label{sect:overviewktheory}

In this section, we give the basic definitions of the two $K$-theory constructions this paper compares.  The details of the ``multiplicative'' or ``multifunctorial'' aspects of the constructions are deferred to  \cref{sect:multifunctorWaldKtheory,sect:multifunctorsmcKtheory}; at this point, we simply define the spaces of the $K$-theory spectra produced by the two constructions.  This presentation of the material is certainly ahistorical: the reason for both of these particular versions of $K$-theory constructions is that they extend beautifully to multifunctorial constructions.  However, the separation of the basic construction from the multifunctoriality allows us to highlight the multiplicative framework used to compare these constructions and (hopefully) clarifies some of the inevitable technical points.

\subsection{Iterated Waldhausen $K$-theory}

The version of Waldhausen's $S_\bullet$ construction that we use is an ``iterated $S_\bullet$ construction'' introduced in  \cite{GH2006}, with more details found in \cite{BM2010} and \cite{Zakh}. It is equivalent to iterating Waldhausen's original construction \cite{Wald}, but produces a multisimplicial structure rather than a simplicial structure and thus can be viewed as a multifunctor, as detailed in the references above and in \cref{sect:multifunctorWaldKtheory}.

We first recall the definition of a Waldhausen category \cite{Wald}. 

Recall that a category is \emph{pointed} if it has a distinguished object $*$ which is both initial and final.
\begin{defn}\label{defn:Wald_cat}
 A \emph{Waldhausen category} $\cC$ is a pointed category together with subcategories $w\cC$ (with morphisms called \emph{weak equivalences} and denoted by~$\xto{\sim}$) and $c\cC$ (with morphisms called \emph{cofibrations}, denoted by~$\hrto$), satisfying the following axioms.
 \begin{enumerate}
 \item All isomorphisms are contained in $w\cC$ and $c\cC$.
 \item For any object $X$ in $\cC$, the unique map $* \to X$ is a cofibration.
 \item If $Y \hlto X \to Z$ is a diagram in $\cC$, the pushout $Y \amalg _ X Z$ exists, and the map $Z \to Y \amalg _ X Z$ is a cofibration.
 \item Given a diagram
 \[
 \xymatrix{
 Y\ar[d]_{\sim} & X\ar[r] \ar[d]^{\sim} \ar@{_{(}->}[l]& Z\ar[d]^{\sim}\\
 Y' & X'\ar[r] \ar@{_{(}->}[l] & Z'
 }
 \]
 in $\cC$, the induced map $Y \amalg _ X Z \fto{\sim} Y' \amalg _ {X'} Z'$ is a weak equivalence.
 \end{enumerate}
\end{defn}

 We now establish some notation and terminology.

\begin{notn}
Let $[m]$ be the poset $0\leq 1 \leq \dotsb \leq m$ thought of as a category.  Let $\Ar[m]$ denote the category of arrows in $[m]$. The objects of $\Ar[m]$ are given by pairs of numbers $ij$ where $0\leq i\leq j\leq m$.   Given a natural number $n$ and  an $n$-tuple of natural numbers $(m_1,\dotsc,m_n)$, we let
\[\Ar[m_1,\dotsc,m_n]=\Ar[m_1]\times \dotsb \times \Ar[m_n].\]
\end{notn}

\begin{observation}\label{obs:comultisimp}
Since the category of arrows is functorial, as $m$ varies, the categories $\Ar[m]$ form a cosimplicial category.  This implies that $\Ar[-,\dots,-]$ is a comultisimplicial category with as many cosimplicial directions as inputs.
\end{observation}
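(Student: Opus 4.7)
The plan is to check functoriality in two stages. First I would observe that the arrow-category construction $\Ar(-)\colon \cC \mapsto \cC^{[1]}$ is itself a functor $\cat \to \cat$: a functor $F\colon \cC \to \cD$ induces a functor $\Ar(F)\colon \Ar(\cC) \to \Ar(\cD)$ by postcomposition, and this assignment preserves composition and identities on the nose. Second, the standard embedding $[-]\colon \Delta \to \cat$ displays the posets $[m]$ as a cosimplicial category. Composing these two functors produces the cosimplicial category $m \mapsto \Ar[m]$, establishing the first sentence of the observation.

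For the second sentence, I would use that the cartesian product $\cat^{n} \to \cat$ is a functor in each variable, and that a comultisimplicial category with $n$ cosimplicial directions is by definition a functor $\Delta^n \to \cat$. The assignment
\[
(m_1,\dots,m_n) \mapsto \Ar[m_1]\times \cdots \times \Ar[m_n]
\]
is precisely the external product of the cosimplicial categories $\Ar[-]$ in each factor, and therefore defines a functor $\Delta^n \to \cat$, as required.

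There is no genuine obstacle here: the entire observation is a formal consequence of the functoriality of $\Ar$ and of cartesian products, and its role is primarily to fix the notation and perspective used in the iterated $S_\bullet$-construction that follows. If one wanted to be fully explicit about coface and codegeneracy operators, the only thing to verify is that for an order-preserving map $\beta\colon [m]\to [m']$ the induced functor $\Ar[m]\to \Ar[m']$ sends the object $ij$ to $\beta(i)\beta(j)$, and that this rule is strictly compatible with composition of such $\beta$; this is immediate.
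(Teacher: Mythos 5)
Your argument is correct and matches the paper's (implicit) reasoning exactly: the paper offers no separate proof of this observation, relying on precisely the functoriality of $\Ar(-)$ and of cartesian products that you spell out. Your explicit description of the induced functor $\Ar[m]\to\Ar[m']$ sending $ij$ to $\beta(i)\beta(j)$ is the only nontrivial check, and it is right.
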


\begin{defn}[{\cite[Definition 2.1]{BM2010}}]\label{defn:cubicallycofibrant} Let $\cC$ be a Waldhausen category. A functor $F\colon [m_1]\times\dots\times[m_n]\to \cC$ is \emph{cubically cofibrant} if
\begin{enumerate}
\item Each map $F(i_1,\dotsc,i_n)\to F(j_1,\dotsc,j_n)$ is a cofibration;
\item in every square given by $1\leq r < s\leq n$, 
\[\xymatrix{ F(i_1,\dotsc, i_n) \ar[r] \ar[d]&F(i_1,\dotsc,i_r+1,\dotsc, i_n)\ar[d]\\
F(i_1, \dotsc, i_s+1, \dotsc, i_n)\ar[r] & F(i_1,\dotsc,i_r+1,\dotsc, i_s+1,\dotsc, i_n),
}
\]
the induced map from the pushout to the lower right entry is a cofibration;
\item the higher dimensional analog of (2) holds: for a $k$-dimensional subcube given by increasing $k$ of the indices by $1$, the map from the pushout of the subcube without the terminal vertex to that terminal vertex is a cofibration.
\end{enumerate}
\end{defn}

We can now define the $n$th level of the $K$-theory spectrum of the Waldhausen category $\cC$. For each $n$, we define a $n$-fold simplicial category $S^{(n)}_{\bullet,\dotsc,\bullet}\cC$.
\begin{defn}\label{BM:iteratedSdotdefn} 
Let $m_1,\dotsc,m_n$ be natural numbers. Let $S^{(n)}_{m_1,\dotsc,m_n}\cC$ be the category whose objects are functors $A\colon\Ar[m_1,\dotsc,m_n]\to \cC$ satisfying the following conditions:
\begin{enumerate}
\item $A_{i_1j_1,\dotsc,i_nj_n}=\ast$ whenever $i_k=j_k$ for some $1\leq k\leq n$;
\item the functor $C\colon [m_1]\times \dots \times [m_n] \to \cC$ given by $C(j_1,\dots,j_m)=A_{0,j_1;\dots;0,j_n}$ is cubically cofibrant as in \cref{defn:cubicallycofibrant};
\item for every object $(i_1j_1,\dotsc,i_nj_n)$ in $\Ar[m_1,\dotsc,m_n]$, every $1\leq k\leq n$ and every $j_k\leq r\leq m_k$ the diagram
\[\xymatrix{ 
A_{i_1j_1,\dotsc,i_kj_k,\dotsc i_nj_n}\ar[r]\ar[d] &A_{i_1j_1,\dotsc,i_kr,\dotsc,i_nj_n}\ar[d]\\
A_{i_1j_1,\dotsc,j_kj_k,\dotsc i_nj_n} \ar[r] &A_{i_1j_1,\dotsc,j_kr,\dotsc,i_nj_n}
}
\]
is a pushout square.
\end{enumerate}
Morphisms in $S^{(n)}_{m_1,\dotsc,m_n}\cC$ are natural transformations. 
\end{defn}

Waldhausen's $S_\bullet$ construction is the special case of this construction where $n=1$. The cubical cofibrancy condition implies the required cofibrancy conditions in this case. Note that the definition makes sense even when $n=0$. In this case we are working with empty tuples; thus $\Ar[\,\,]$ is the terminal category, all of the conditions on the functors are vacuous, and we get that $S^{(0)}\cC \cong \cC$.   The $n$-fold simplicial structure on $S^{(n)}_{\bullet,\dotsc,\bullet}\cC$ is induced by the comultisimplicial structure on $\Ar[-,\dots,-]$ of \cref{obs:comultisimp}.

\begin{prop}\label{prop:BMisiteratedSdot}
The $n$-fold simplicial category $S^{(n)}_{\bullet,\dotsc,\bullet}\cC$ is isomorphic to the $n$-fold iteration of Waldhausen's $S_\bullet$ construction on $\cC$.
\end{prop}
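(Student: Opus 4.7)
The natural approach is induction on $n$. The base case $n=1$ should be essentially tautological: $\Ar[\,\,] \times \Ar[m_1] = \Ar[m_1]$, condition (1) forces $A_{ii} = *$, condition (3) is the pushout condition in Waldhausen's original $S_\bullet$, and for $n=1$ the cubical cofibrancy condition of \cref{defn:cubicallycofibrant} reduces to part (1) — each $C(j) \hookrightarrow C(j')$ is a cofibration — since there are no higher dimensional cubes to impose (2) or (3). This recovers precisely Waldhausen's $S_{m_1}\cC$.

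For the inductive step, I would establish a natural isomorphism
\[
S^{(n)}_{m_1,\dotsc,m_n}\cC \;\cong\; S_{m_n}\bigl(S^{(n-1)}_{m_1,\dotsc,m_{n-1}}\cC\bigr),
\]
where the right-hand side is formed using the Waldhausen structure on $S^{(n-1)}_{\bullet,\dots,\bullet}\cC$ at each multisimplicial level (where weak equivalences and cofibrations are defined levelwise in the first $n-1$ directions, with cofibrations being the natural transformations whose components form objects of the appropriate $S^{(n)}$-construction). The functorial bijection on objects is given by currying: a functor $\Ar[m_1,\dots,m_n]\to \cC$ corresponds to a functor $\Ar[m_n] \to \Fun(\Ar[m_1,\dots,m_{n-1}],\cC)$, and morphisms (natural transformations) match up automatically.

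The main work is verifying that, under currying, the three conditions of \cref{BM:iteratedSdotdefn} for $n$ correspond exactly to the conditions that (i) each $A_{0,\dots,0;i_nj_n}$ satisfies the conditions of \cref{BM:iteratedSdotdefn} for $n-1$, (ii) the basepoint and pushout axioms hold in the $n$th direction, and (iii) the resulting natural transformations along the $n$th direction are cofibrations in $S^{(n-1)}\cC$. The basepoint condition (1) and the pushout condition (3) decompose cleanly: condition (1) in direction $n$ gives the basepoint condition for $S_\bullet$, while condition (1) in the remaining directions gives the basepoint condition of $S^{(n-1)}$ applied fiberwise; condition (3) splits analogously. The delicate point is cubical cofibrancy: I would argue that a cube of dimension $k$ in $[m_1]\times\dots\times[m_n]$ either lies entirely in the first $n-1$ directions (giving cubical cofibrancy for the fibers, so each $A_{0,\dots,0;i_nj_n}$ lands in $S^{(n-1)}$), or involves the $n$th direction (giving exactly the condition that an arrow in $S^{(n-1)}$ is a cofibration there, inductively characterized by a lower-dimensional cubical cofibrancy condition).

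The hard part will be this last bookkeeping: unpacking the definition of cofibration in the iterated $S_\bullet$ construction and matching it term-by-term with the $k$-dimensional subcube conditions. I expect this to reduce to the standard observation that pushouts and cofibrations in functor categories into a Waldhausen category are computed pointwise, together with the fact that the inductive definition of ``cofibration in $S^{(n-1)}\cC$'' unfolds to exactly the pushout-cube conditions enumerated in \cref{defn:cubicallycofibrant}. Once the bijection of objects is checked to restrict to a bijection of the full subcategories cut out by these conditions, the isomorphism of $n$-fold simplicial categories follows because the multisimplicial structure on both sides is induced by functoriality of $\Ar[-]$ in each slot (\cref{obs:comultisimp}), which commutes with currying.
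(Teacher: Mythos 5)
Your proposal is correct and takes essentially the same route as the paper: the paper's proof is exactly the iterated product--hom adjunction (currying) followed by a translation of the conditions of \cref{BM:iteratedSdotdefn} into the levelwise/latching conditions defining Waldhausen's $S_\bullet$ and its cofibrations, which is what your induction spells out in detail (the paper peels off the first index rather than the last, an immaterial difference). The paper leaves the condition-matching as a sketch just as you do, so your more explicit organization of the cubical-cofibrancy bookkeeping is, if anything, a refinement of the published argument rather than a departure from it.
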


To see this, note that the objects in $S^{(n)}_{m_1,\dotsc,m_n}\cC$ are functors $\Ar[m_1]\times \dots \times \Ar[m_n] \to \cC$, while the objects in $S_{m_1}(\dotsb(S_{m_n}(\cC))\dotsi)$ are functors $\Ar[m_1]\to S_{m_2}(\dotsb(S_{m_n}(\cC))\dotsi)$. Iterated use of the product-hom adjunction in $\cat$ allows one to view the latter as the former. Via this identification, one translates the conditions on $S_\bullet$ to those of \cref{BM:iteratedSdotdefn} to get the isomorphism.

Consider the $n$-fold simplicial category $wS^{(n)}_{\bullet,\dots,\bullet}\cC$ given by restricting the morphisms to those natural transformations whose components are weak equivalences. We can obtain a simplicial category by taking the diagonal.

\begin{defn}[\cite{BM2010}]\label{defn:BMktheory}
 The $K$-theory spectrum of a Waldhausen category $\cC$ is defined to be the symmetric spectrum $\waldk\cC$ in $\Spec(\scat)$ with $n$-th level given by the simplicial category
\[\waldk\cC(n) = diag(wS^{(n)}_{\bullet,\dotsc,\bullet}\cC).\]
 The $\Sigma_n$ action on $\waldk\cC(n)$ is given by permuting the sequences $m_1,\dotsc,m_n$. 

Of course, to give a complete definition of the $K$-theory spectrum $\waldk\cC$, we should discuss the structure maps. These can be found in \cite[\S6]{Zakh}, but they will also follow from our discussion of the multicategorical version in \cref{sect:multifunctorWaldKtheory}. See, in particular,  Theorems \ref{thrm:multifunctorEcattoSpec} and \ref{thrm:waldkismultifunctor}.
\end{defn}

\subsection{The $K$-theory of a symmetric monoidal category}

We next turn to the $K$-theory of a symmetric monoidal category.  Our definition is a minor refinement of the $K$-theory functor defined by Elmendorf and Mandell \cite{EM2006}, which is itself an adaptation of Segal's classical construction of the $K$-theory of symmetric monoidal category \cite{segal}. Elmendorf and Mandell produce a multifunctorial construction of the $K$-theory of a permutative category, that is, of a symmetric monoidal category that is strictly associative and has a strict unit.  In what follows, we consider strictly unital symmetric monoidal categories: ones with strict unit but whose associativity natural transformation need not be strict.  Since any symmetric monoidal category can be rigidified to either a strictly unital symmetric monoidal category or to a permutative category, we do not change the effective generality of the construction, but it is more convenient for our comparison.   Our adaptation of the Elmendorf--Mandell construction simply requires keeping track of associativity isomorphisms.

The context of strictly unital symmetric monoidal categories may seem somewhat ad hoc at first, but these categories are a natural source for $K$-theory constructions.  The unit functions as a basepoint and asking for structure that preserves the unit strictly is akin to requiring based maps.

\begin{defn} Let $(\cC,\oplus)$ be a small symmetric monoidal category, with unit $e$, symmetry $\gamma$ and associator $\alpha$.  We say that $\cC$ has a \emph{strict unit} if the functors $e\oplus -\colon \cC \to \cC$ and $-\oplus e\colon \cC \to \cC$ are equal to the identity functor, and the left and right unitors are the identity natural transformations---i.e., the natural maps $e\oplus c\to c$ and $c\oplus e\to c$ are the identity maps.   Note that this implies that instances of the symmetry and associativity natural transformations involving the unit must be the identity.
\end{defn}

\begin{defn}\label{strictlyunitalstrongsymmmonoidalfunctor}A \emph{strictly unital strong symmetric monoidal functor} between strictly unital symmetric monoidal categories is a symmetric monoidal functor $F\colon \cC\to \cD$ that is \begin{itemize}
\item \emph{strong} in that the coherence natural transformation $\delta\colon F\circ\oplus_\cD\Rightarrow\oplus_\cC\circ F$ is a natural isomorphism, and
\item \emph{strictly unital} in that $F(e)=e$ and the components of $\delta$ involving the units are the identity.
\end{itemize}
\end{defn}

\begin{defn}\label{EMdefn} Let $\cC$ be a small strictly unital symmetric monoidal category. Let $M_1,\dotsc, M_n$ be finite based sets.  In what follows, we will denote by $\langle S \rangle$ the $n$-tuple $(S_1,\dots,S_n)$, where $S_i$ is a subset of $M_i$ not containing the basepoint. For $1\leq i \leq n$ and $T$ a basepoint-free subset of $M_i$, we let $\langle S\lceil_i T\rangle$ denote the $n$-tuple $(S_1,\dots,S_{i-1},T,S_{i+1},\dots,S_n)$. 

Define $\bar{\cC}(M_1,\dotsc, M_n)$ to be the category whose objects are systems \[\{ C,\rho\}=\{C_{\langle S\rangle},\rho_{\langle S\rangle,i, T, U}\}\] where
\begin{itemize}
\item $\langle S\rangle=(S_1,\dotsc,S_n)$ runs through all $n$-tuples of subsets $S_i\subset M_i$ such that $S_i$ does not contain the basepoint,
\item in $\rho_{\langle S\rangle, i, T, U}$, $i$ runs through $1,\dotsc, n$ and $T$ and $U$ run through subsets of $S_i$ with $T\cap U=\emptyset$ and $T\cup U= S_i$,
\item $C_{\langle S\rangle}$ is an object of $\cC$,
\item $\rho_{\langle S\rangle,i,T,U}$ is an isomorphism\footnote{This definition varies slightly from that of Elmendorf and Mandell in that they do not require $\rho$ to be invertible here.  However, there is a natural adjunction between the category where the $\rho$'s are isomorphisms and the category where the $\rho$'s are merely morphisms, and so upon geometric realization we obtain weakly equivalent $K$-theory spaces.}
\[ C_{\langle S\lceil_i T\rangle} \oplus C_{\langle S\lceil_i U\rangle}\longrightarrow C_{\langle S\rangle}\]
in $\cC$,
\end{itemize} 
such that certain axioms hold.  These axioms are those of \cite[Construction 4.4]{EM2006}, except that we adapt them to include the associativity isomorphisms, any of which is denoted by $\alpha$ below.  That is, we require:
\begin{enumerate}
\item (Pointedness 1) $C_{\langle S\rangle}=e$ if any of the sets $S_k$ is empty,
\item (Pointedness 2) $\rho_{\langle S\rangle,i,T, U}$ is the identity if any of the sets $S_k, T$ or $U$ is empty,
\item (Symmetry) for all $\rho_{\langle S\rangle,i,T, U}$, the following diagram commutes:
\[
\xymatrixcolsep{2cm}\xymatrix{ C_{\langle S\lceil_i T \rangle}\oplus C_{\langle S\lceil_i U\rangle} \ar[r]^-{\rho_{\langle S\rangle,i,T,U}}\ar[d]_\gamma & C_{\langle S\rangle}\ar@{=}[d]\\
C_{\langle S\lceil_i U \rangle}\oplus C_{\langle S\lceil_i T\rangle} \ar[r]_-{\rho_{\langle S\rangle,i, U,T}} & C_{\langle S\rangle}
}
\]
\item (Associativity)  for all $\langle S\rangle$, $i$, and $T, U, V\subset M_i$ with $T\cup U\cup V=S_i$ and $T$, $U$, $V$ pairwise disjoint, the following diagram commutes:
\[\xymatrixcolsep{2.8cm}\xymatrix{ \left(C_{\langle S\lceil_i T\rangle} \oplus C_{\langle S\lceil_i U\rangle} \right)\oplus C_{\langle S\lceil_i V\rangle} \ar[r]^-{\rho_{\langle S\lceil_i T\cup U \rangle,i,T,U}\oplus \id}\ar[d]_{\alpha}  & C_{\langle S\lceil_i T\cup U\rangle} \oplus C_{\langle S\lceil_i V\rangle}
\ar[dd]^{\rho_{\langle S \rangle, i, T\cup U, V}}\\
C_{\langle S\lceil_i T\rangle}\oplus \left( C_{\langle S\lceil_i U\rangle}\oplus C_{\langle S\lceil_i V\rangle}\right)\ar[d]_{\id \oplus \rho_{\langle S \lceil_i U\cup V\rangle, i, U, V}} \\
C_{\langle S\lceil_i T\rangle} \oplus C_{\langle S\lceil_i U\cup V\rangle}\ar[r]_-{\rho_{\langle S\rangle, i, T, U\cup V}}
 & C_{\langle S\rangle} }
\]
\item (Coherence of the $\rho$'s) for all $\langle S \rangle$, all $i\neq j$, and all $T$, $U$, $V$, $W$ with $T\cup U=S_i$ and $V \cup W=S_j$, the following diagram commutes:
\[
\def\objectstyle{\scriptstyle}
\def\labelstyle{\scriptstyle}
\xy
(0,0)*+{\left(C_{\langle S \lceil_i T \lceil_jV \rangle}\oplus (C_{\langle S \lceil_i U \lceil_j V \rangle}\oplus C_{\langle S \lceil_i T \lceil_j W\rangle})\!\right)\oplus C_{\langle S \lceil_i U\lceil_j W\rangle}}="A";
(0,20)*+{(C_{\langle S \lceil_i T \lceil_j V \rangle}\oplus C_{\langle S \lceil_i U \lceil_j V \rangle})\oplus (C_{\langle S \lceil_i T \lceil_j W \rangle}\oplus C_{\langle S \lceil_i U \lceil_j W \rangle})}="B";
(0,-20)*+{\left(C_{\langle S \lceil_i T \lceil_j V \rangle}\oplus (C_{\langle S \lceil_i T \lceil_j W \rangle}\oplus C_{\langle S \lceil_i U \lceil_j V \rangle})\!\right)\oplus C_{\langle S \lceil_i U \lceil_j W \rangle}}="C";
(0,-40)*+{(C_{\langle S \lceil_i T \lceil_j V \rangle}\oplus C_{\langle S \lceil_i T \lceil_j W\rangle})\oplus (C_{\langle S \lceil_i U \lceil_j V \rangle}\oplus C_{\langle S \lceil_i U \lceil_j W \rangle})}="D";
(80,20)*+{C_{\langle S \lceil_j V \rangle}\oplus C_{\langle S \lceil_j W \rangle}}="E";
(80,-40)*+{C_{\langle S \lceil_i T \rangle}\oplus C_{\langle S \lceil_i U \rangle}}="F";
(80,-10)*+{C_{\langle S\rangle}}="G";
{\ar^{\alpha}_{\cong} "A";"B"};
{\ar^-{\rho_{\lceil_j V, i, T, U}\oplus \rho_{ \lceil_j W,i, T, U} }"B";"E"};
{\ar^-{\rho_{\langle S\rangle,j,V, W}} "E";"G"};
{\ar_-{(\id\oplus \gamma) \oplus \id}^{\cong} "A";"C"};
{\ar_{\alpha}^{\cong} "C";"D"};
{\ar_-{\rho_{\lceil_i T, j, V, W}\oplus \rho_{\lceil_i U,j,V,W}} "D";"F"};
{\ar_-{\rho_{\langle S\rangle, i, T, U}} "F";"G"};
\endxy
\]
In the horizontal maps, $\rho_{\lceil_j V,i,T, U}$ is shorthand for $\rho_{\langle S\lceil_jV\rangle,i, T, U}$ and so forth.
\end{enumerate}

A morphism $f\colon \{ C, \rho\} \to \{C', \rho'\}$ is a system of morphisms $f_{\langle S\rangle}\colon C_{\langle S\rangle}\to C'_{\langle S\rangle}$ for all $\langle S\rangle$ such that $f_{\langle S\rangle}$ is the identity when any $S_k$ is empty and the morphisms $f_{\langle S\rangle}$ commute with the morphisms  $\rho_{\langle S\rangle, i, T,U}$ in the natural sense.
\end{defn}

The categories $\overline{\cC}(M_1,\dotsc,M_n)$ fit together to form a functor from $n$-tuples of finite based sets to categories.  Permuting the sets $M_1,\dotsc, M_n$ yields an isomorphic category, and this, together with the other functorial structures enjoyed by $\overline{\cC}$, allows Elmendorf and Mandell to make the following definition of a $K$-theory spectrum of $\cC$. Recall the simplicial model of the circle from \cref{defn:simplicial_circle}. Since $S^1$ is a functor from $\Delta^\op$ to $\mathcal{F}$ (see \cref{rem:f_simplicial_circle}), we can precompose $\overline{\cC}$ with $(S^1)^{\times n}$ to produce an $n$-fold based simplicial category $\overline{\cC}(\underbrace{S^1,\dotsc, S^1}_n)$.

\begin{rem}
The construction $\overline{C}(-,\dotsc,-)$ of \cref{EMdefn} is functorial with respect to strictly unital strong symmetric monoidal functors.  The requirement that the maps $\rho$ be isomorphisms means we must require that the coherences $\delta$ of a monoidal functor be natural isomorphisms to obtain this functoriality.
\end{rem}

\begin{defn}[Elmendorf--Mandell] \label{EMKtheorydefn} For a small symmetric monoidal category $\cC$, the symmetric spectrum $\EMk(\cC)$ is defined at level $0$ by $K(\cC)(0)=\overline{\cC}(S^0)$ and at level $n>0$ by the simplicial category
\[\EMk(\cC)(n)=diag(\overline{\cC}(\underbrace{S^1,\dotsc, S^1}_n))\] 
\end{defn}
  
Again, this definition of the $K$ theory spectrum is not complete without the structure maps, but we defer the definition of these to \cref{sect:multifunctorsmcKtheory}.

\begin{rem}\label{remark:EMktheoryusuallyforisos}
This definition of the $K$-theory spectrum $\EMk(\cC)$ requires only that $\cC$ be a symmetric monoidal category.  However, the construction is typically applied in the case where $\cC$ is a groupoid.  The value of passing to groupoids is illustrated on $\pi_0$:  In the general case, $\pi_0\EMk(\cC)$  is the group completion of the monoid $\pi_0 B(\cC)$, which is the set of objects in $\cC$ modulo the relation that identifies two objects that are connected by a string of morphisms. In the case where $\cC$ is a groupoid, we obtain the group completion of the monoid of isomorphism classes of its objects.
\end{rem}

\section{Multicategories}\label{sect:multicategories}

As mentioned in the introduction, the multiplicative structure on the $K$-theory constructions of \cref{sect:overviewktheory} is encoded by describing them as multifunctors, that is, as functors between multicategories.  In this section, we recall the definitions necessary to make these concepts precise.

 The multicategories of interest in this paper are symmetric multicategories enriched in $\cat$ and $\sset$. We thus give the general definition of a symmetric multicategory enriched over an arbitrary symmetric monoidal category $(\V,\otimes,I)$.
For a full account, see \cite[Chapter 11]{yau}.

\begin{defn}A \emph{symmetric multicategory enriched in $\V$}, denoted $\bM$, consists of a collection of objects, denoted $\Ob\bM$, and for each $k\geq 0$ and objects $a_1,\cdots a_k, b$, an object $\bM(a_1,\dots, a_k; b)$ of $\V$.   The morphism objects are related by composition maps (in $\V$)
\[
\xymatrix{
\makebox[.2\textwidth]{$\bM(b_1,\dots b_n;c)\otimes \bM(a_1^1,\dots , a_1^{k_1};b_1)\otimes \cdots \otimes \bM(a_n^1,\dots , a_n^{k_n};b_n)$}\ar[d]^-{\Gamma}\\
 \bM(a_1^1,\dots , a_1^{k_1}, \dots ,a_n^1,\dots , a_n^{k_n};c).
}
\]
For each object $a$, there is an identity map $I \to \bM(a;a)$. Given $\sigma \in \Sigma_k$, there is a map in $\V$
\[\sigma^\ast \colon \bM(a_1,\dots,a_k;b)\to \bM(a_{\sigma(1)},\dots,a_{\sigma(k)};b).\]
All of this data is subject to associativity, identity and equivariance conditions, which can be found in \cite[Def 2.1]{EM2006} or \cite[Def 11.2.1]{yau}
\end{defn}

\begin{rem}\label{rem:composekarymorphisms}
As is the case with enriched categories, a symmetric multicategory $\bM$ enriched in $\V$ has an underlying symmetric multicategory (enriched in sets). We call the morphisms in the underlying multicategory the $k$-ary morphisms of $\bM$.  The identity map $I \to \bM(a;a)$ corresponds to the identity 1-ary morphism. In the cases where $\V$ is $\cat$ and $\sset$, the $k$-ary morphisms in the multicategory are given by the objects in the morphism category and the 0-simplices in the morphism simplicial set, respectively. 

Composing with $k$-ary morphisms induces maps in $\V$ between the different morphism objects. For example, a 1-ary morphism $h$ from $b$ to $c$ induces a map 
\[h\circ - \colon \bM(a_1,\dots,a_k;b) \to \bM(a_1,\dots,a_k;c).\]
\end{rem}

\begin{notn}
When $\V=\cat$, the morphisms of  $\bM(a_1,\dots, a_k; b)$ are called  \emph{$k$-ary cells}.
 Following the usual 2-categorical convention, $k$-ary morphisms $f,g\colon (a_1,\dots,a_k)\to b$ and $k$-ary cells $\beta \colon f \Rightarrow g$ are depicted as
 \[
 \xymatrix{
**[l](a_1,\dots,a_k) \rtwocell<4>^{f}_{g}{\beta} & b.
}\]
Composition of morphisms will be denoted by $(g; f_1,\dots f_n)\mapsto g\circ (f_1,\dots, f_n)$; similarly for compositions of cells. 
Note that the composition map in \cref{rem:composekarymorphisms} corresponds to the 2-categorical notion of whiskering. For example, for the $k$-ary cell $\beta$ as above and a 1-ary morphism $h\colon b \to c$, we denote by $h\circ \beta$ the composite $\id_h \circ \beta \colon h\circ f \Rightarrow h\circ g$.
\end{notn}

\begin{defn}
Given multicategories $\bM$ and $\bN$, a \emph{symmetric multifunctor} $F\colon \bM \to \bN$ consists of an assignment on objects $F\colon \Ob\bM \to \Ob\bN$, and for each tuple of objects $a_1,\dots,a_k,b$ of $\bM$, a map in $\V$ between morphism objects
\[ F\colon \bM(a_1,\dots,a_k;b)\to \bN(F(a_1),\dots,F(a_k);F(b)),\]
compatible with identity, composition and the $\Sigma_k$-action.
\end{defn}

\begin{defn}\label{defn:multinaturaltrans}
 Given symmetric multifunctors $F,G \colon \bM \to \bN$, a \emph{multinatural transformation} $\varepsilon\colon F \Rightarrow G$ consists of, for each $a\in \Ob \bM$, a 1-ary morphism $\varepsilon_a\colon F(a) \to G(a)$ in $\bN$, such that for all tuples $a_1,\dots,a_k,b$ of objects in $\bM$, the diagram in $\V$
 \[
 \xymatrix{
 \bM(a_1,\dots,a_k;b) \ar[r]^-{G} \ar[d]_F & \bN(G(a_1),\dots,G(a_k);G(b))\ar[d]^{-\circ (\varepsilon_{a_1},\dots,\varepsilon_{a_k})}\\
 \bN(F(a_1),\dots,F(a_k);F(b))\ar[r]_-{\varepsilon_b\circ -} & \bN(F(a_1),\dots,F(a_k);G(b))
 }
 \]
 commutes.
 \end{defn}
\begin{rem}\label{symmetricmultnattrans}
Observe that there is no additional condition required for multinatural transformation between symmetric multifunctors to respect the symmetry.  The same phenomenon occurs for symmetric monoidal categories: a ``symmetric monoidal natural transformation'' is just a monoidal natural transformation between symmetric monoidal functors.
\end{rem}

\begin{ex}\label{monoidalismulti} A symmetric monoidal category $(\mathcal{C},\oplus)$ gives rise to a symmetric multicategory (enriched in sets).  The objects of the multicategory are the objects of $\mathcal{C}$, and the $k$-morphisms are given by morphisms 
\[\left(\dotsi(a_1\oplus a_2)\dotsi \right)\oplus a_k\to b\]
 in $\mathcal{C}.$
If moreover $\mathcal{C}$ is a symmetric monoidal category enriched in $\V$, we can similarly construct an associated symmetric multicategory enriched in $\V$. Several of the enriched symmetric multicategories used in this paper arise in this way.  In this case, we use the same notation for the symmetric monoidal category and the symmetric multicategory.

Note that if $\cC$ and $\cD$ are symmetric monoidal categories, a  lax symmetric monoidal functor $F\colon \cC \to \cD$ canonically gives rise to a symmetric multifunctor of the corresponding symmetric multicategories. 
\end{ex} 

The most important examples of this type are given by the categories $\Spec$ of symmetric spectra in simplicial sets  and $\Spec(\scat)$ of symmetric spectra in simplicial categories (see \cref{sec:spectra}). The former is a symmetric monoidal category enriched in simplicial sets, and as such, it gives rise to a symmetric multicategory enriched in simplicial sets as well.  The latter is a symmetric monoidal category enriched in $\cat$.\footnote{Using the same idea as in the case of simplicial sets, one can also enrich $\Spec(\scat)$ in $\scat$. We will not need this second enrichment.} In both cases, the $k$-ary morphisms are precisely the $k$-linear maps of \cref{defn:bilinear}.  

\begin{rem}In the case of a categorically enriched multicategory, there is an unfortunate clash of nomenclature: a ``2-morphism'' might refer either to a 2-ary 1-morphism, that is, an object of a hom category of the form $\bM(a_1,a_2;b)$, or to a 2-cell, that is, a morphism in a hom category. To mitigate this possibility of confusion, we will consistently refer to the former as ``2-ary morphisms'' and the later as ``cells'' or more generally ``$k$-ary cells.''
\end{rem}

\begin{rem}\label{itsalwayssymmetric}
In what follows, we  use the term ``multicategory'' to refer to symmetric multicategories with whatever enrichment is specified.  Similarly, the terms ``multifunctor'' and ``multinatural transformation'' are used for symmetric enriched versions.
\end{rem}

\section{$\E_\ast$ categories and their relation to spectra}\label{sect:Ecat}

We now construct a multicategory $\Ecat$ that serves as an intermediate multicategory for the $K$-theory constructions of \cref{BM:iteratedSdotdefn} and \cref{EMdefn}.  This category is similar to the category $\Gcat$ constructed in \S5 of \cite{EM2006}, which the reader should consult to find details we omit. In brief, the objects of $\Ecat$ and $\Gcat$ consist of ``based'' functors from categories $\E$ and $\cG$ into the category of small categories, and the multicategorical structure on both categories is defined in the same way.  The difference is that $\cG$ is obtained via the Grothendieck construction on a functor given by taking powers of $\F$, whereas $\E$ is obtained via the Grothendieck construction on a related functor given by taking powers of $\Delta^\op$.

\begin{notn}
If $\beta\colon [m]\to [n]$ is a map in $\Delta$, let $\hat{\beta}$ denote the corresponding map $\hat{\beta}\colon [n]\to [m]$ in $\Delta^\op$.
\end{notn}

Let $\Inj$ denote the skeletal version of the category of finite sets and injections. The objects are given by the sets $\ul{r}=\{1,\dots,r\}$, where $r$ is a nonnegative integer. The category $\Inj$ is permutative, with monoidal product given by disjoint union. Consider the functor $(\Delta^{\op})^\ast \colon \Inj \to \cat$ that sends $\ul{r}$ to the category $(\Delta^{\op})^r$, and sends the injection $q\colon \ul{r} \to \ul{s}$ to the functor $q_* \colon (\Delta^{\op})^r \to (\Delta^{\op})^s $ given by
\[q_* ([m_1],\dots,[m_r]) = ([m_{q^{-1}(1)}],\dots,[m_{q^{-1}(s)}])\]
on objects. Here, by convention, if $q^\inv(j)=\emptyset$ for some $j$, we set $[m_{q^{\inv}(j)}]=[1]$. On morphisms, $q_*$ takes an $r$-tuple of morphisms $(\hat{\beta}_1,\dots,\hat{\beta}_r)$ to the tuple $(\hat{\beta}_{q^\inv(1)},\dots,\hat{\beta}_{q^\inv(s)})$ where by convention we insert the identity on $[1]$ whenever $q^\inv(j)=\emptyset$.

Let $\E=\Inj\wreath(\Delta^\op)^\ast$ be the category obtained by applying the Grothendieck construction to the functor $(\Delta^\op)^\ast$.  Concretely, this means that objects of $\E$ are given by (possibly empty) tuples of objects of $\Delta^\op$. Following the convention of \cite{EM2006}, we use bold and angle brackets to denote such a tuple, say  $\langle\mb{m}\rangle=([m_1],\dots,[m_r])$.  A morphism from an $r$-tuple $\langle \mb{m}\rangle=([m_1],\dots,[m_r])$ to an $s$-tuple $\langle \mb{m'}\rangle=([m'_1],\dots,[m'_s])$ consists of a pair $(q,\langle\hat{\beta}\rangle)$ where $q\colon\ul{r}\to \ul{s}$ is a morphism in  $\Inj$ and $\langle\hat{\beta}\rangle$ is a morphism 
\[\langle\hat{\beta}\rangle\colon q_*\langle\mb{m}\rangle\to \langle\mb{m'}\rangle\]
in $(\Delta^\op)^s$.
The composition of morphisms $(q',\langle\hat{\beta}'\rangle)\circ(q,\langle\hat{\beta}\rangle)$ is the morphism $(q'\circ q, \langle\hat{\beta}'\rangle\circ \langle q'_*\hat{\beta}\rangle)$.

\begin{rem}\label{factorE}
 Let $\inc_r\colon \ul{r} \hookrightarrow \ul{r+1}$ be the standard inclusion that misses $r+1$. A morphism in $\Inj$ can be factored as a composite of repeated inclusions of this type and permutations $\sigma \colon \ul{s} \to \ul{s}$. A morphism $(q,\langle\hat{\beta}\rangle)$ in $\E$ can be factored as $(\id,\langle\hat{\beta}\rangle)\circ (q,\id)$. Thus, morphisms in $\E$ are generated by morphisms of the form $(\inc_r,\id)$, $(\sigma,\id)$ and $(\id,\langle\hat{\beta}\rangle)$.
\end{rem}

The category $\E$ has a permutative structure given by concatenation of tuples. Following \cite{EM2006}, we denote this monoidal structure by $\odot$. 
\begin{rem} The existence of the permutative structure follows from the following general categorical fact. If $\D$ is a symmetric monoidal category, and $F\colon \D \to \cat$ is a lax symmetric monoidal functor (with respect to cartesian product), then the category $\D \wreath F$ obtained by applying the Grothendieck construction to $F$ has a symmetric monoidal structure compatible with that of $\D$, and moreover, if the monoidal structure on $\D$ is strict (i.e., $\D$ is a permutative category), then so is the one for $\D \wreath F$.
\end{rem}

\begin{defn}[cf.\,{\cite[Definition 5.2, Proposition 5.3]{EM2006}}]\label{defn:Ecat}
 An $\E_\ast$-category consists of a functor $X\colon \E \to \cat_\ast$ such that   $X([m_1],\dots,[m_r])=\ast$ if $m_i=0$ for some $i=1,\dots,r$.
 
 A map $F\colon X \to Y$ of $\E_\ast$-categories is a natural transformation of functors $\E \to \cat_\ast$.

\end{defn}

We now describe the categorically-enriched multicategory $\Ecat$ of $\E_\ast$-cat\-egor\-ies. A $k$-ary morphism $F\colon (X_1,\dots,X_k) \to Y$ between $\E_\ast$-categories $X_1,\dots,X_k$ and $Y$ 
 is given by a natural transformation
\[
\xymatrixcolsep{10ex}\xymatrix{
\E^k \ar[r]^-{X_1\times \dots \times X_k} \ar[d]_{\odot} \drtwocell<\omit>{F} & \cat_\ast^k\ar[d]^{\times}\\
\E \ar[r]_-{Y} & \cat_\ast
}
\]
compatible with basepoints, in the sense that for any object $(\langle \mb{m}_1\rangle,\dots,\langle \mb{m}_k\rangle)$ in $\E^k$, the functor
\[F\colon X_1(\langle \mb{m}_1\rangle)\times\dots\times X_k(\langle \mb{m}_k\rangle) \to Y(\langle \mb{m}_1\rangle\odot\dots\odot\langle \mb{m}_k\rangle)\]
satisfies
\begin{itemize}
\item $F(x_1,\dots,x_k)=\ast$ if $x_i=\ast$ is the basepoint in the category $X(\langle\mb{m}_i\rangle)$ for some $i=1,\dots,k$, and
\item $F(f_1,\dots,f_k)=\id_\ast$ if $f_i=\id_\ast$ for some $i=1,\dots,k$.
\end{itemize}
This is equivalent to requiring that the map factor through the smash product
\[X_1(\langle \mb{m}_1\rangle)\sma\dots\sma X_k(\langle \mb{m}_k\rangle)\]
of based categories.

A $k$-ary cell between $k$-ary morphisms $F$ and $G$ is given by a modification $\phi$ (remembering that $\cat_\ast$ is really a 2-category) that is compatible with the basepoints. In practice, what this means is that for every object $(\langle\mb{m}_1\rangle,\dots\langle\mb{m}_k\rangle )$ in $\E^k$, there is a natural transformation of functors \[\xymatrixcolsep{-5pc}
\xymatrix{
**[l]X_1(\langle\mb{m}_1\rangle)\times \dots\times  X_k(\langle\mb{m}_k\rangle) \rtwocell^{F}_{G}{\phi} &**[r]Y(\langle\mb{m}_1\rangle\odot \dots \odot \langle\mb{m}_k\rangle),
}
\]
satisfying compatibility conditions with respect to maps in $\E^k$ and basepoints. The condition on basepoints states that $\phi_{x_1,\dots,x_k}=\id_\ast$ if $x_i=\ast$ for some $i$. Note that this is equivalent to saying that $\phi$ is a based natural transformation of based functors 
\[\xymatrixcolsep{-5pc}\xymatrix{
**[l]X_1(\langle\mb{m}_1\rangle)\sma \dots\sma  X_k(\langle\mb{m}_k\rangle) \rtwocell^{F}_{G}{\phi} &**[r]Y(\langle\mb{m}_1\rangle\odot \dots \odot \langle\mb{m}_k\rangle).
}
\]

\begin{prop}[cf. {\cite[Proposition 5.4]{EM2006}}]
$\E_\ast$-categories, together with the $k$-ary morphisms and $k$-ary cells described above, form a categorically enriched multicategory $\Ecat$.
\end{prop}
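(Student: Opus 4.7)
The plan is to verify the axioms of a categorically enriched symmetric multicategory by following the argument for $\Gcat$ in \cite[Proposition 5.4]{EM2006}. The two structural inputs that make everything work are the strict permutative structure $(\E, \odot)$ on the Grothendieck construction and the symmetric monoidal 2-category structure on $(\cat_*, \sma)$; basepoint compatibility is automatic once one records, as in the construction, that every $k$-ary morphism factors through the appropriate smash product of $\E_\ast$-categories.

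First one defines the composition of multimorphisms. Given $G\colon (Y_1, \dots, Y_n) \to Z$ and $F_j\colon (X_j^1, \dots, X_j^{k_j}) \to Y_j$ for $j = 1, \dots, n$, the composite $G \circ (F_1, \dots, F_n)$ at an object $(\langle\mb{m}_j^i\rangle)_{j,i}$ is the functor
\[
\bigwedge_{j,i} X_j^i(\langle\mb{m}_j^i\rangle) \xrightarrow{\bigwedge_j F_j} \bigwedge_j Y_j\Bigl(\bigodot_i \langle\mb{m}_j^i\rangle\Bigr) \xrightarrow{G} Z\Bigl(\bigodot_j \bigodot_i \langle\mb{m}_j^i\rangle\Bigr),
\]
where strict associativity of $\odot$ makes the iterated concatenation unambiguous. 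The identity 1-ary morphism on $X$ is the identity natural transformation. For $\sigma \in \Sigma_k$, the action on $F\colon (X_1, \dots, X_k) \to Y$ is obtained by precomposing with the symmetry $\E^k \to \E^k$ permuting the factors and identifying $\langle\mb{m}_{\sigma(1)}\rangle \odot \cdots \odot \langle\mb{m}_{\sigma(k)}\rangle$ with $\langle\mb{m}_1\rangle \odot \cdots \odot \langle\mb{m}_k\rangle$ via the morphism of $\E$ whose first component is $\sigma \in \Inj(\ul{k}, \ul{k})$. The enrichment in $\cat$ is handled by componentwise vertical composition of $k$-ary cells in $\cat_*$, and whiskering by componentwise horizontal composition.

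The real work lies in checking associativity, identity, and equivariance of the resulting multicategory structure, along with functoriality of composition in its cell arguments. Each axiom unpacks into either a statement about strict associativity or symmetry of $\odot$ on $\E$, or a standard coherence in $(\cat_*, \sma)$. The main obstacle is the sheer amount of index bookkeeping, especially for the equivariance axiom, where one must track how the block-permutation $\Sigma_n$-action at the outer level combines with the $\Sigma_{k_j}$-actions at each inner level to produce the full $\Sigma_{k_1 + \cdots + k_n}$-action on the composite. Since these are precisely the manipulations carried out for $\Gcat$ in \cite[Proposition 5.4]{EM2006}, no new ideas are required beyond transcribing the argument from the $\F$-based setting to the $\Delta^\op$-based one.
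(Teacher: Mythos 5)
Your proposal is correct and follows exactly the route the paper intends: the paper itself gives no proof, deferring to \cite[Proposition 5.4]{EM2006} after noting that the multicategory structure on $\Ecat$ is defined in the same way as on $\Gcat$, and your outline (composition via $\odot$-concatenation and pasting, identities, the $\Sigma_k$-action through $(\sigma,\id)$ in $\E$, enrichment via vertical composition of cells, with the axioms reducing to the permutativity of $(\E,\odot)$ and coherence in $(\cat_*,\sma)$) is precisely that transcription from the $\F$-based to the $\Delta^{\op}$-based setting.
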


Following \cite[Construction 7.3]{EM2006}, we construct a multifunctor from $\Ecat$ to the multicategory of symmetric spectra in $\scat$. Given an $\E_\ast$-category $X$ and a natural number $p\geq 0$, consider the simplicial pointed category $IX(p)$ given by the composite
\[\xymatrix{
\Delta^\op \ar[r]^-{\diag} &( \Delta ^\op)^p \ar[r] & \E \ar[r]^-{X} & \cat_\ast,  
}
\]
where the unlabeled map is the inclusion of $(\Delta^\op)^p$ into $\E$ as the fiber over $\ul{p}\in\Inj$. Note that $IX(p)$ has a $\Sigma_p$-action induced by the action on $(\Delta^\op)^p$. There are structure functors
\[IX(p) \sma S^1 \rightarrow IX(p+1),\]
which at simplicial level $q$ are given by the map
\[\bigvee_{j\in S^1_q\setminus \ast} X(\underbrace{[q],\dots,[q]}_{p})\rightarrow X(\underbrace{[q],\dots,[q]}_{p+1})\]
that on the wedge summand labeled by $j\in S^1_q\setminus *=\{1,2, \dots q\}$ is induced by the map $(\inc_p,(\id,\dots,\id,\hat{\beta}^j))$ in $\E$, where $\beta^j\colon [q]\to [1]$ is the map in $\Delta$ that sends $0,\dots,j-1$ to $0$ and $j,\dots, q$ to $1$.

\begin{thm}[{cf.\,\cite[Theorem 7.4]{EM2006}}]\label{thrm:multifunctorEcattoSpec} 
The assignment $I$ extends to a categor\-ically-enrich\-ed multifunctor
\[I \colon \Ecat \rightarrow \Spec (\scat).\]
\end{thm}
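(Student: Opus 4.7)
The plan is to mimic the proof of \cite[Theorem 7.4]{EM2006}, using that the Grothendieck construction of $(\Delta^\op)^\ast$ carries a permutative structure $\odot$ by concatenation of tuples, exactly analogous to the situation for $\cG$. The object-level assignment $X\mapsto IX$ is already given, so the work is to define $I$ on $k$-ary morphisms and $k$-ary cells and to verify the symmetric multifunctor axioms (identity, composition, $\Sigma_k$-equivariance, enrichment).

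First, I would define $I$ on $k$-ary morphisms. Given a $k$-ary morphism $F\colon (X_1,\dots,X_k)\to Y$, I need to produce a $k$-linear map $IF$ of symmetric spectra in $\scat$, that is, $\Sigma_{p_1}\times\dots\times\Sigma_{p_k}$-equivariant based functors
\[
IF_{p_1,\dots,p_k}\colon IX_1(p_1)\sma\dots\sma IX_k(p_k)\to IY(p_1+\dots+p_k).
\]
At simplicial level $q$, the source is $X_1([q]^{p_1})\sma\dots\sma X_k([q]^{p_k})$ and the target is $Y([q]^{p_1+\dots+p_k})$. Since $\odot$ is concatenation, we have $([q]^{p_1})\odot\dots\odot([q]^{p_k})=([q]^{p_1+\dots+p_k})$, and I define $IF_{p_1,\dots,p_k}$ at level $q$ to be the component of $F$ at $(([q]^{p_1}),\dots,([q]^{p_k}))$. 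This is based by the basepoint compatibility built into $k$-ary morphisms of $\E_*$-categories, and simplicial naturality follows because $F$ is natural on $\E^k$ and the diagonal inclusion $\Delta^\op\to(\Delta^\op)^{p_i}\hookrightarrow\E$ is a functor. Equivariance under $\Sigma_{p_1}\times\dots\times\Sigma_{p_k}$ comes from the fact that permutations act through morphisms $(\sigma,\id)$ in $\E$, which act on each factor separately, together with naturality of $F$.

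Second, I would verify compatibility with the structure maps in both the first and second diagrams of \cref{defn:bilinear}. For the first diagram (compatibility with $\sigma_q$ on the last factor), the structure map on $IX_k$ is induced by the morphism $(\iota_{p_k},(\id,\dots,\id,\hat{\beta}^j))$ in $\E$; smashing with $\id$ on the remaining factors and then applying $F$ gives the same morphism in $\E$ as first applying $F$ and then the structure map on $IY$, because $\odot$ simply concatenates and the block $\iota$ on the right matches up with $\iota_{p_1+\dots+p_k}$ under concatenation. For the second diagram, the twist and block permutation appearing in the right-hand square correspond to a permutation in $\Inj$ that moves a singleton past the last $q$ entries, and the equality of the two composites follows from the equivariance of $F$ together with the factorization of morphisms in $\E$ from \cref{factorE}. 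This verification is where the main bookkeeping lies and is the step I expect to be the principal obstacle: tracking the block permutations in $\Sigma_{p+q+1}$ on the target against the permutation action in $\Inj$ on $\E$.

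Third, I would define $I$ on $k$-ary cells exactly as for $k$-ary morphisms: a modification $\phi$ as in the definition of $\Ecat$ evaluates at the diagonal tuples $(([q]^{p_1}),\dots,([q]^{p_k}))$ to give, levelwise in $q$, a based natural transformation between the functors $IF_{p_1,\dots,p_k}$ and $IG_{p_1,\dots,p_k}$. Compatibility of $\phi$ with morphisms in $\E^k$ and with basepoints translates directly into the 2-cell condition in $\Spec(\scat)$. This defines a functor on each morphism category $\Ecat(X_1,\dots,X_k;Y)\to\Spec(\scat)(IX_1,\dots,IX_k;IY)$.

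Finally, the multifunctor axioms reduce to checking that $I$ preserves identities, preserves composition, and is $\Sigma_k$-equivariant. Each of these follows from the analogous property of the natural transformations and modifications in $\Ecat$, combined with the strict associativity, unit, and symmetry of $\odot$. Identity preservation is immediate; composition preservation uses that both $\odot$ and the smash product in $\scat_*$ are strictly associative (and that concatenation on diagonal tuples matches addition on the index $p$); $\Sigma_k$-equivariance on the multifunctor side is implemented by the symmetry of $\odot$ in $\E$, which in turn is induced by the symmetry in $\Inj$. Together these steps upgrade $I$ to a categorically enriched multifunctor $\Ecat\to\Spec(\scat)$.
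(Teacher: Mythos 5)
Your proposal is correct and follows essentially the same route as the paper, which itself defers to Elmendorf--Mandell's Construction 7.3 and Theorem 7.4: evaluate the $k$-ary morphism (or cell) at diagonal tuples, use that $\odot$ is concatenation so that $([q]^{p_1})\odot\dots\odot([q]^{p_k})=([q]^{p_1+\dots+p_k})$, and derive equivariance and compatibility with the structure maps from naturality of $F$ over $\E^k$ together with the factorization of morphisms of $\E$ into $(\inc_r,\id)$, $(\sigma,\id)$, and $(\id,\langle\hat\beta\rangle)$. The one step you flag as the principal obstacle---matching the block permutation in $\Sigma_{p+q+1}$ from \cref{defn:bilinear} against the injection obtained by concatenating $\inc_{p_i}$ in a non-final slot---is exactly the bookkeeping the cited proof carries out, and your identification of that injection as $\inc_{p_1+\dots+p_k}$ followed by a block permutation in $\Inj$ is the right resolution.
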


\section{Waldhausen $K$-theory as a multifunctor}\label{sect:multifunctorWaldKtheory}

In this section, we provide the fully multifunctorial definition of the Waldhausen $K$-theory construction of  \cref{BM:iteratedSdotdefn}.  As mentioned there, this definition is essentially due to Geisser and Hesselholt \cite{GH2006} and Blumberg and Mandell \cite{BM2010}.  However, rather than directly defining a multifunctor from a multicategory of Waldhausen categories to the multicategory of spectra as in \cite{BM2010}, we factor their construction as a multifunctor from Waldhausen categories to $\Ecat$ followed by the multifunctor from $\Ecat$ to $\Spec(\scat)$ of  \cref{thrm:multifunctorEcattoSpec}.

We begin by describing the multicategory of Waldhausen categories, which was first described in \cite{BM2010}.  For more details, see \cite{Zakh}.  Since we have already defined Waldhausen categories in \cref{sect:overviewktheory}, we need only to define the multimorphisms.

\begin{defn} 
 Let $\cC$ and $\D$ be Waldhausen categories. A functor $F\colon \cC \to \D$ is \emph{exact} if it sends $*$ to $*$, and preserves weak equivalences, cofibrations and pushouts along cofibrations.
\end{defn}

\begin{defn}
Let $\A_1, \dots, \A_k$ and $\B$ be Waldhausen categories. A functor
\[ F \colon \A_1 \times \cdots \times \A_k \to \B \]
is said to be \emph{$k$-exact} (or \emph{multiexact} if we wish to omit reference to $k$) if the following conditions hold:
\begin{enumerate}
\item the functor $F$ is exact in each variable; 
\item given cofibrations $f_i\colon X_{i,0} \hrto X_{i,1}$ in $\A_i$ for all $i=1,\dots k$, the cube $F(f_1, \dots, f_k)$ is cubically cofibrant.  That is, the functor 
\[F(f_1,\dots,f_k)\colon [1]^{\times k}\to \B\]
 is cubically cofibrant in the sense of \cref{defn:cubicallycofibrant}.
\end{enumerate}

Note that a 1-exact morphism is just an exact functor. A 0-exact morphism from the empty sequence into $\B$ is the choice of an object in $\B$. We can think of this as a functor from the empty product satisfying  no extra conditions, since both conditions for $k$-exactness are vacuous when $k=0$.
\end{defn}

Given multiexact functors $F_i \colon \A_i^1 \times \cdots \times \A_i ^{k_i} \to \B_i$ for $i=1, \dots, n$, and $G\colon \B_1 \times \cdots \times \B_n \to \cC$, we define their multicomposition as
\[ G\circ (F_1 \times \cdots \times F_n) \colon \A_1^1 \times \cdots \A_1 ^{k_1} \times \cdots \times  \A_n^1 \times \cdots \times \A_n^{k_n} \to \cC.\]
By \cite[Proposition 4.7]{Zakh}, this composition is again multiexact.

\begin{notn} \label{multicatofWaldcats}
 Given Waldhausen categories $\A_1,\dots,\A_k,\B$, we denote by $\mcwald(\A_1,\dots,\A_k;\B)$ the category of $k$-exact functors and all natural transformations. These assemble together to form the categorically-enriched multicategory $\mcwald$ whose objects are small Waldhausen categories (see \cite[Proposition 4.7]{Zakh}).
\end{notn}

\begin{rem}
Observe that the multimorphisms in  $\mcwald$ are simply functors satisfying some extra conditions, as opposed to having extra structure.  This makes constructing such multimorphisms straightforward, although one then needs to check the conditions are satisfied.
\end{rem}

\begin{rem}
As proved in \cite{Zakh}, the category $\mcwald(\A_1,\dots,\A_k;\B)$ is itself a Waldhausen category and composition is a 2-exact functor, giving $\mcwald$ the structure of a \emph{closed} multicategory. We will not use this extra structure in the present paper.
\end{rem}

We now show that Waldhausen $K$-theory provides a multifunctor from the multicategory $\mcwald$ to the multicategory $\Ecat$.

\begin{prop}
 The iterated Waldhausen construction can be assembled to give a $\E_*$-category $S^{()}_{\bullet,\dots,\bullet}\cC$, sending an object $([m_1],\dots,[m_r])$ to $S^{(r)}_{m_1,\dots,m_r}\cC$.
\end{prop}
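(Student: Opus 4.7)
The plan is to exploit the factorization of morphisms in $\E$ from \cref{factorE} as composites of three generator types --- simplicial maps $(\id,\langle\hat{\beta}\rangle)$, permutations $(\sigma,\id)$, and inclusions $(\inc_r,\id)$ --- and to define the action of $S^{()}_{\bullet,\dots,\bullet}\cC$ separately on each. The action on objects is given in the statement, so what remains is to define the action on morphisms, verify functoriality, and check the basepoint condition.

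For a simplicial morphism $(\id,\langle\hat{\beta}\rangle)$, I would use precisely the multisimplicial structure noted after \cref{BM:iteratedSdotdefn}: by \cref{obs:comultisimp}, the collection $\Ar[-,\dots,-]$ is comultisimplicial, and precomposing an $A\colon\Ar[m_1,\dots,m_r]\to\cC$ with the induced functor between arrow categories yields a functor into $\cC$ over the new multi-index. For a permutation $(\sigma,\id)$, I would use the canonical isomorphism between $\Ar[m_{\sigma^\inv(1)},\dots,m_{\sigma^\inv(r)}]$ and $\Ar[m_1,\dots,m_r]$ that permutes factors, and precompose with it. In both cases, verifying that the conditions of \cref{BM:iteratedSdotdefn} are preserved is routine, since pushouts, cofibrations, and the basepoint conditions can all be checked coordinatewise.

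The main obstacle is the inclusion $(\inc_r,\id)\colon ([m_1],\dots,[m_r]) \to ([m_1],\dots,[m_r],[1])$. To an object $A\colon\Ar[m_1,\dots,m_r]\to\cC$ I would associate the functor $B\colon\Ar[m_1,\dots,m_r,1]\to\cC$ determined by $B_{i_1j_1;\dots;i_rj_r;01}=A_{i_1j_1;\dots;i_rj_r}$, $B_{i_1j_1;\dots;i_rj_r;00}=B_{i_1j_1;\dots;i_rj_r;11}=\ast$, with the morphisms in the new $\Ar[1]$-direction given by the unique maps into and out of the zero object. Verifying that $B$ lies in $S^{(r+1)}_{m_1,\dots,m_r,1}\cC$ is the heart of the argument: condition (1) of \cref{BM:iteratedSdotdefn} is built in; the pushout condition (3) in the new direction is trivial since the relevant squares involve only copies of $A$ and $\ast$; and the cubical cofibrancy condition for subcubes meeting the new direction reduces to the Waldhausen axioms, as the relevant maps are either of the form $\ast\hookrightarrow X$ or pushouts of cofibrations already controlled by the cubical cofibrancy of the $[m_1]\times\dots\times[m_r]$-cube underlying $A$.

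The basepoint condition that $S^{(r)}_{m_1,\dots,m_r}\cC=\ast$ when some $m_i=0$ follows at once from condition (1) of \cref{BM:iteratedSdotdefn}, since $\Ar[0]$ has a single object $00$, so any $A$ must be constantly $\ast$. To conclude, I need to check that my three generator actions respect the compatibility relations implicit in \cref{factorE}: permutations conjugate inclusions in the natural way, inclusions commute coordinatewise with simplicial maps in the first $r$ coordinates, and the action on the newly inserted coordinate is compatible with the convention that $q^\inv(j)=\emptyset$ defaults to $[1]$. These relations are mechanical consequences of the coordinate-by-coordinate definitions above and pose no further difficulty beyond careful bookkeeping.
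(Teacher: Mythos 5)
Your proposal is correct and follows essentially the same route as the paper: both use the factorization of morphisms in $\E$ into the generators $(\id,\langle\hat\beta\rangle)$, $(\sigma,\id)$, and $(\inc_r,\id)$, handle the first two via the multisimplicial structure and permutation of inputs, and handle the inclusion via exactly the extension functor $e$ of \cref{extension} (your $B$ is the paper's $e(A)$). Your verification that $e(A)$ satisfies the conditions of \cref{BM:iteratedSdotdefn} and that $S^{(r)}_{m_1,\dots,m_r}\cC=\ast$ when some $m_i=0$ matches the paper's, which simply declares these checks routine.
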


\begin{proof}
  The basepoint of $S^{(r)}_{m_1,\dots,m_r}\cC$ is given by the constant functor at the zero object $*$.   
  Recall from \cref{factorE} that any morphism in $\E$ can be factored as a composite of morphisms of the form $(\inc_r,\id)$, $(\sigma,\id)$, and $(\id, \hat{\beta})$, where $\inc_r\colon \ul{r} \to \ul{r+1}$ is the ordered inclusion, $\sigma$ is a permutation, and $\hat{\beta}$ is a morphism in $(\Delta^\op)^r$. The image of 
 \[(\inc_r,\id)\colon ([m_1],\dots,[m_r]) \to ([m_1],\dots,[m_r],[1])\]
  is given by the extension isomorphism $e$ defined below in \cref{extension}. The image of 
  \[(\sigma,\id)\colon ([m_1],\dots,[m_n]) \to ([m_{\sigma^{-1}(1)}],\dots,[m_{\sigma^{-1}(r)}])\] 
  is the map $S^{(r)}_{m_1,\dots,m_r}\cC\to S^{(r)}_{m_{\sigma^\inv(1)},\dots,m_{\sigma^\inv(r)}}$ induced by permuting the inputs according to $\sigma$. For maps of the form $(\id,\hat{\beta})$ we use the multisimplicial structure. It is routine to check that these assignments preserve basepoints and are compatible with composition in $\E$, thus we get a functor $\E \to \cat_\ast$.
  
Note that if $m_i=0$ for some $i$, then then only functor $A\colon \Ar[m_1,\dots,m_r] \to \cC$ satisfying the conditions of \cref{BM:iteratedSdotdefn} is the constant functor at the zero object; thus in this case, $S^{(r)}_{m_1,\dots,m_r}\cC=\ast$ as required. 
\end{proof}

\begin{lemma}\label{extension}
 Given an $n$-tuple $(m_1,\dots,m_n)$, there is an extension isomorphism
 \[e\colon S^{(n)}_{m_1,\dots,m_n}\cC \longrightarrow S^{(n+1)}_{m_1,\dots,m_n,1}\cC\]
 compatible with the multisimplicial structure.
\end{lemma}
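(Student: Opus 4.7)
The idea is that $\Ar[1]$ has only three objects, namely $00$, $01$, and $11$, and the pointedness condition of \cref{BM:iteratedSdotdefn} forces any object of $S^{(n+1)}_{m_1,\dots,m_n,1}\cC$ to take the value $*$ whenever the last index is $00$ or $11$. Thus such an object is determined by its restriction to the $01$-slice in the last coordinate, which is precisely a functor out of $\Ar[m_1,\dots,m_n]$.

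Concretely, my plan is to define $e$ on objects by sending $A\colon\Ar[m_1,\dots,m_n]\to\cC$ to the functor $e(A)\colon \Ar[m_1,\dots,m_n,1]\to \cC$ given by
\[
e(A)_{i_1j_1,\dots,i_nj_n,01}=A_{i_1j_1,\dots,i_nj_n}, \qquad e(A)_{i_1j_1,\dots,i_nj_n,ij}=\ast \text{ if } ij\in\{00,11\},
\]
with morphisms inherited from $A$ on the $01$-slice and uniquely determined elsewhere (either identities on $\ast$ or the canonical maps out of or into $\ast$). On morphisms, $e$ acts by applying the corresponding natural transformation on the $01$-slice and identities on the $\ast$ entries.

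Next I would verify that $e(A)$ lies in $S^{(n+1)}_{m_1,\dots,m_n,1}\cC$ by checking the three conditions of \cref{BM:iteratedSdotdefn}. Pointedness is immediate. For cubical cofibrancy, the associated cube $C\colon [m_1]\times\dots\times[m_n]\times[1]\to\cC$ is constantly $\ast$ on the face $j_{n+1}=0$ and agrees with the cube from $A$ on the face $j_{n+1}=1$; cofibrancy of the new edges and subcubes follows from the fact that $\ast\hookrightarrow X$ is always a cofibration and that pushouts of $\ast\leftarrow\ast\to X$ are just $X$. For the pushout condition, squares extending in one of the first $n$ directions inherit pushout-hood from $A$ (either directly or by being a square of $\ast$'s), while squares extending in the $(n+1)$st direction are automatically pushouts since one or more corners are $\ast$.

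The inverse is given by restriction along $\Ar[m_1,\dots,m_n]\hookrightarrow \Ar[m_1,\dots,m_n,1]$, $(i_1j_1,\dots,i_nj_n)\mapsto (i_1j_1,\dots,i_nj_n,01)$; one checks this lands in $S^{(n)}_{m_1,\dots,m_n}\cC$ and is inverse to $e$ because the pointedness condition forces the $00$ and $11$ entries and the conditions of the construction force the corresponding morphisms. Compatibility with the multisimplicial structure in the first $n$ coordinates is immediate, since both $e$ and its inverse act as the identity in the last coordinate and the multisimplicial operators in the first $n$ coordinates commute with restriction to the $01$-slice. The main (minor) obstacle is the bookkeeping for condition (2) of cubical cofibrancy, where the pushout–subcube conditions need to be checked for the new subcubes involving the $(n+1)$st direction; this is routine once one notes that pushouts involving $\ast$ collapse to the non-$\ast$ vertex.
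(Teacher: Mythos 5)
Your proposal is correct and matches the paper's proof: the paper defines $e$ by exactly the same formulas (value $\ast$ when the last coordinate is $00$ or $11$, value $A$ on the $01$-slice) with inverse given by restriction to the $01$-slice, and declares the verification of the conditions of \cref{BM:iteratedSdotdefn} routine. Your additional details on cubical cofibrancy and the pushout squares are just a fleshing-out of that routine check.
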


\begin{proof}
 The map $e$ sends a functor $A\colon \Ar[m_1,\dots,m_n]\to \cC$ to the functor $e(A)$ given by
 \begin{align*}
 e(A)_{i_1j_1,\dots,i_nj_n,00}&=\ast\\
 e(A)_{i_1j_1,\dots,i_nj_n,01}&=A_{i_1j_1,\dots,i_nj_n}\\
 e(A)_{i_1j_1,\dots,i_nj_n,11}&=\ast,
 \end{align*}
 with maps given by those of $A$ and by the unique maps from and to the zero object. It is routine to check that if $A$ satisfies the conditions of \cref{BM:iteratedSdotdefn}, so does $e(A)$. 
 
 The inverse is given by restricting a functor $B\colon \Ar[m_1,\dots,m_n,1]\to\cC$ to the subfunctor given by fixing $01$ in the last coordinates.
\end{proof}

\begin{thm}\label{thrm:waldkismultifunctor}
The assignment above gives a categorically-enriched multifunctor
\[\sdot\colon \mcwald \longrightarrow \Ecat.\]
Restriction to subcategories of weak equivalences then yields a categorically-enriched multifunctor 
\[w\sdot \colon \mcwald \longrightarrow \Ecat.\] 
\end{thm}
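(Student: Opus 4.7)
On objects, $\sdot$ is already defined. The plan is to extend to $k$-ary morphisms and $k$-ary cells and then verify the multifunctor axioms. Given a $k$-exact functor $F\colon \A_1\times\cdots\times\A_k\to\B$ and an object $(\langle\mb{m}_1\rangle,\dots,\langle\mb{m}_k\rangle)$ of $\E^k$ with $\langle\mb{m}_i\rangle=([m_i^1],\dots,[m_i^{r_i}])$, I would define the component of $\sdot F$ to be the functor
\[
S^{(r_1)}_{\mb{m}_1}\A_1\times\cdots\times S^{(r_k)}_{\mb{m}_k}\A_k \longrightarrow S^{(r_1+\cdots+r_k)}_{\mb{m}_1,\dots,\mb{m}_k}\B
\]
sending $(A_1,\dots,A_k)$ to the composite $F\circ(A_1\times\cdots\times A_k)\colon \Ar[\mb{m}_1,\dots,\mb{m}_k]\to \B$, and acting on morphisms by componentwise application of $F$. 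For cells, a natural transformation $\alpha\colon F\Rightarrow G$ is sent to the modification whose component at $(A_1,\dots,A_k)$ is the natural transformation with entries $\alpha_{(A_1(-),\dots,A_k(-))}$.

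The main content is verifying that $F\circ(A_1\times\cdots\times A_k)$ actually lies in $S^{(\sum r_i)}_{\mb{m}_1,\dots,\mb{m}_k}\B$, i.e. satisfies the three conditions of \cref{BM:iteratedSdotdefn}. Condition (1) (vanishing on diagonal indices) follows because each $A_i$ vanishes there and $F$ is exact, hence sends tuples containing $*$ to $*$. Condition (3) (pushout squares in each direction) follows because $F$ is exact in each variable individually, so it preserves the relevant pushouts along cofibrations. The main obstacle, and where condition (2) of $k$-exactness is indispensable, is verifying cubical cofibrancy of the cube $C\colon [m_1^1]\times\cdots\times[m_k^{r_k}]\to\B$ obtained from the composite: here one checks each elementary sub-cube by restricting each $A_i$ to a one-step inclusion in each of its chosen directions, reducing to the case of a cube of the form $F(f_1,\dots,f_k)$ for cofibrations $f_i$ in $\A_i$ (built by composing cofibrations in the cubically cofibrant $A_i$), which is cubically cofibrant by hypothesis on $F$.

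Next I would check naturality in $(\langle\mb{m}_1\rangle,\dots,\langle\mb{m}_k\rangle)\in\E^k$. By \cref{factorE}, it suffices to check the generators: on morphisms of the form $(\id,\langle\hat\beta\rangle)$, naturality is immediate because applying $F$ pointwise commutes with reindexing functors $\Ar[\mb{m}_i]\to\Ar[\mb{m}'_i]$; on the ordered inclusions $(\inc_r,\id)$, it reduces to the fact that the extension isomorphism $e$ of \cref{extension} is defined by inserting zero objects, and $F$ preserves zero objects and the induced pushouts; and on permutations $(\sigma,\id)$ it is tautological from the product structure. Basepoint compatibility holds since $F$ exact means $F(\dots,*,\dots)=*$, so the functor factors through the smash product, as required in $\Ecat$. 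Multifunctoriality axioms (identity, composition, $\Sigma_k$-equivariance) then follow directly from the fact that $\sdot F$ is defined by pointwise composition with $F$: multicomposition $G\circ(F_1\times\cdots\times F_n)$ clearly gives $\sdot G\circ(\sdot F_1\times\cdots\times\sdot F_n)$, the identity functor corresponds to the identity natural transformation, and the symmetric group action matches the block permutation of tuples on $\E^k$.

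For the action on cells, given $\alpha\colon F\Rightarrow G$ and a tuple $(A_1,\dots,A_k)$, the assignment $(\vec i_1,\dots,\vec i_k)\mapsto \alpha_{A_1(\vec i_1),\dots,A_k(\vec i_k)}$ is natural in the $\vec i_j$'s and in $(A_1,\dots,A_k)$ by naturality of $\alpha$, and it is based since $\alpha_{\ast,\dots,\ast}=\id_\ast$. Compatibility with horizontal and vertical composition of cells follows from the corresponding compatibility for natural transformations, and compatibility with the $\E^k$-structure follows exactly as in the morphism case. Finally, to produce $w\sdot$, observe that a $k$-exact functor preserves weak equivalences in each variable and that any natural transformation between $k$-exact functors restricts to a natural transformation on the subcategories of weak equivalences; hence the construction descends, and the resulting $w\sdot$ is a multifunctor by restriction of the previous checks.
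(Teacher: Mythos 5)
Your proposal is correct and follows essentially the same route as the paper: the paper likewise defines $\sdot(F)$ on a tuple $(A^1,\dots,A^k)$ as the reindexing of $F\circ(A^1\times\cdots\times A^k)$, defines $\sdot(\phi)$ on cells by whiskering, and then leaves the verification of the conditions of \cref{BM:iteratedSdotdefn}, naturality in $\E^k$, the basepoint conditions, and the multifunctor axioms to the reader---all checks that you spell out. The one place your sketch is too quick is the cubical-cofibrancy verification: an elementary subcube of the composite may have two or more of its directions landing in the \emph{same} input factor $\A_i$, in which case it is not of the form $F(f_1,\dots,f_k)$ for single cofibrations $f_i$; there one must combine condition (2) of $k$-exactness (for the cross-factor directions) with exactness in each variable, i.e.\ preservation of cofibrations and of pushouts along cofibrations (for the within-factor directions), as in the argument of \cite{Zakh} to which the paper defers.
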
 

\begin{proof}
 Let $F\colon \A_1 \times\dots\times \A_k \to \B$ be a $k$-exact functor of Waldhausen categories. For an object $(\langle {\bf m}_1\rangle,\dots,\langle {\bf m}_k\rangle)$ in $\E^k$, with tuples of length $(r_1,\dots,r_k)$, let $\langle \mb{m}\rangle = \langle\mb{m}_1\rangle \odot \dots \odot \langle \mb{m}_k \rangle$ and $r=r_1+\dots +r_k$. We must construct a functor
 \[\sdot(F)\colon S^{(r_1)}_{\langle \mb{m}_1\rangle} \A_1 \times \dots \times S^{(r_k)}_{\langle \mb{m}_k\rangle} \A_k \to S^{(r)}_{\langle \mb{m} \rangle} \B.\]
 An object of the source is given by a $k$-tuple $(A^1,\dots,A^k)$, where $A^i$ is a functor
 \[\Ar[m^i_1,\dots,m^i_{r_i}] \rightarrow \A_i\]
 satisfying the conditions in \cref{BM:iteratedSdotdefn}. The map $\sdot(F)$ sends the tuple $(A^1,\dots,A^k)$ to the composite of the isomorphism
 \[\Ar[m^1_1,\dots, m^1_{r_1},\dots,m^k_1,\dots, m^k_{r_k}]\cong \Ar[m^1_1,\dots, m^1_{r_1}]\times\dots\times\Ar[m^k_1,\dots, m^k_{r_k}]
 \]
 with $F\circ (A^1\times \dots \times A^k)$. We leave to the reader to check that this composite satisfies the conditions of \cref{BM:iteratedSdotdefn}, giving an object of the category $S^{(r)}_{\langle \mathbf{m} \rangle} \B$. The assignment of $\sdot(F)$ on morphisms  is defined similarly. It is standard to check that this assignment is natural with respect to maps in $\E^k$ and that it satisfies the basepoint conditions for $k$-ary cells described in \cref{sect:Ecat}. By construction, $\sdot$ respects composition of multimorphisms and the symmetric group action, thus giving a symmetric multifunctor as wanted.

The definition of $\sdot$ also extends to 2-cells between $k$-ary morphisms.  Given a natural transformation $\phi\colon F\Rightarrow G$ between $k$-exact functors, the 2-cell $\sdot(\phi)$ is essentially defined by whiskering.  It is straightforward to check that $\sdot$ is thus a categorically-enriched multifunctor.
 \end{proof}
 
Inspection of the definitions then shows that we indeed have our desired factorization of the Waldhausen $K$-theory multifunctor.

 \begin{prop}[{cf.\ \cite[Section 2]{BM2010}}]\label{prop:waldkcompositetospectra}
 The $K$-theory of \cref{defn:BMktheory} factors as
 \[
 \xymatrixcolsep{1.4cm}\xymatrix{
 \waldk\colon \mcwald \ar[r]^-{w\sdot} & \Ecat \ar[r]^-{I} & \Spec(\scat).
 }
 \]
This is a multifunctor of categorically enriched multicategories.
\end{prop}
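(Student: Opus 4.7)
The plan is to verify the claimed factorization by unwinding definitions, since multifunctoriality of the composite is automatic from \cref{thrm:waldkismultifunctor} and \cref{thrm:multifunctorEcattoSpec} together with the fact that composition of categorically enriched multifunctors is again a categorically enriched multifunctor. The real content of the proposition is therefore the identification of $I(w\sdot\cC)$ with $\waldk\cC$ as symmetric spectra in $\scat_*$, together with the corresponding identifications on $k$-ary morphisms and cells.

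First I would match the underlying simplicial based categories level by level. By the construction of $I$ preceding \cref{thrm:multifunctorEcattoSpec}, the simplicial category $I(w\sdot\cC)(p)$ has $q$-simplices obtained by evaluating $w\sdot\cC$ at $([q],\dots,[q])\in(\Delta^\op)^p$, viewed as an object of $\E$ in the fibre over $\ul{p}\in\Inj$. By the definition of $w\sdot\cC$ as an $\E_*$-category, this evaluation is exactly $wS^{(p)}_{q,\dots,q}\cC$, which is precisely the $q$-simplices of $\diag(wS^{(p)}_{\bullet,\dots,\bullet}\cC)=\waldk\cC(p)$.

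Next I would check that the $\Sigma_p$-actions and structure maps agree. The $\Sigma_p$-action on $I(w\sdot\cC)(p)$ comes from the permutation action on $(\Delta^\op)^p$ by permuting the $p$ factors; under the identification above this becomes the action on $wS^{(p)}_{\bullet,\dots,\bullet}\cC$ that permutes the multisimplicial directions, matching \cref{defn:BMktheory}. For the structure maps, the map $I(w\sdot\cC)(p)\sma S^1\to I(w\sdot\cC)(p+1)$ is defined at simplicial level $q$ on the wedge summand indexed by $j\in S^1_q\setminus\ast$ via the morphism $(\inc_p,(\id,\dots,\id,\hat{\beta}^j))$ in $\E$. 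Using the factorization of \cref{factorE}, this decomposes as $(\id,(\id,\dots,\id,\hat{\beta}^j))\circ(\inc_p,\id)$, whose images under $w\sdot\cC$ are, respectively, the extension isomorphism of \cref{extension} inserting a new coordinate and the multisimplicial operator in the last coordinate induced by $\hat{\beta}^j$.

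The main bookkeeping step is confirming that this combination of the extension isomorphism and $\hat{\beta}^j$ reproduces the structure maps of the iterated $S_\bullet$ construction as written down in \cite[\S6]{Zakh}; everything else is either definitional or a routine check of compatibility with faces, degeneracies, and the $\Sigma_p$-action. Once the spectrum-level identification is established, the same reasoning applied to $k$-exact functors and natural transformations between them identifies the multimorphisms and cells produced by $I\circ w\sdot$ with those produced directly by $\waldk$, yielding the desired factorization as categorically enriched multifunctors.
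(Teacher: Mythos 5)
Your proposal is correct and matches the paper's approach: the paper offers no written proof beyond ``inspection of the definitions,'' and your level-by-level identification of $I(w\sdot\cC)(p)$ with $\diag(wS^{(p)}_{\bullet,\dots,\bullet}\cC)$, together with the matching of $\Sigma_p$-actions and the factorization $(\inc_p,(\id,\dots,\id,\hat{\beta}^j))=(\id,(\id,\dots,\id,\hat{\beta}^j))\circ(\inc_p,\id)$ for the structure maps, is precisely that inspection. Note that the paper in \cref{defn:BMktheory} essentially takes the structure maps of $\waldk\cC$ to be those produced by this factorization, so your final comparison with \cite[\S6]{Zakh} is a consistency check rather than a required step.
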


\section{$K$-theory of symmetric monoidal categories as a multifunctor}\label{sect:multifunctorsmcKtheory}

In this section, we explain how the $K$-theory of symmetric monoidal categories in \cref{EMKtheorydefn} forms a multifunctor.  Again, we emphasize that the material in this section is not really new: this definition is a minor variation on the $K$-theory multifunctor from \cite{EM2006}, and our exposition below draws heavily from this work.  Although the reader familiar with \cite{EM2006} will find much of this material familiar, we nevertheless give details in order to make the comparison in \cref{sect:comparison} explicit.

The two main differences between what follows and the construction in \cite{EM2006} are these: First, we are working with strictly unital symmetric monoidal categories, rather than permutative categories.  This is purely a matter of convenience to allow comparison with Waldhausen's $K$-theory without the need to strictify.  Second, the $K$-theory multifunctor in \cite{EM2006} from the multicategory of permutative categories to the multicategory of spectra is defined by passing through an intermediate multicategory $\Gcat$.  In order to make the comparison between Waldhausen $K$-theory and symmetric monoidal $K$-theory in \cref{sect:comparison}, we choose instead to use the intermediate multicategory $\Ecat$ of \cref{sect:Ecat}.  In fact, while this category is not named and described in \cite{EM2006}, this factorization is simply a change of perspective on theirs.

As in the previous section, we begin by defining the multicategory of strictly unital symmetric monoidal categories that is the source of the $K$-theory functor.   This definition is a minor extension of the definition of the multicategory of permutative categories from \cite{EM2006}---by passing from permutative categories to strictly unital symmetric monoidal categories, we are simply relaxing the associativity requirements.  The material in this section boils down to keeping track of the associativity isomorphisms and checking that the constructions of \cite{EM2006} go through in this more general case.

We first define $k$-linear functors.  These will be the $k$-ary maps in the multicategory of strictly unital symmetric monoidal categories. Observe that a $1$-linear functor is just a strictly unital strong symmetric monoidal functor, as in \cref{strictlyunitalstrongsymmmonoidalfunctor}.

\begin{defn}\label{defn:klinear}
Let $\cC_1,\dotsc, \cC_k$ and $\cD$ be small strictly unital symmetric monoidal categories.  A \emph{$k$-linear} functor $(\cC_1,\dotsc,\cC_k)\to \cD$ is a functor
\[F\colon \cC_1\times \dotsb \times \cC_k\to \cD \]
together with distributivity natural isomorphisms 
\[ \delta_i\colon F(c_1,\dotsc, c_i,\dotsc,c_k)\oplus F(c_1,\dotsc,c_i',\dotsc,c_k)\to F(c_1,\dotsc,c_i\oplus c_i',\dotsc,c_k)\]
for $1\leq i\leq k$.  The requirement that these natural transformations be natural isomorphisms ensures that $k$-linear maps are compatible with the requirement that the maps $\rho$ be isomorphisms in \cref{EMdefn}. 
 For convenience, we will suppress the variables that are fixed, hence writing
\[\delta_i\colon F(c_i)\oplus F(c'_i) \to F(c_i \oplus c'_i).\]
The functor $F$ and the transformations $\delta_i$ must satisfy unitality conditions:
\begin{itemize}
\item $F(c_1,\dotsc,c_k)=e$ if $c_i=e$ for some $i$,
\item $F(f_1,\dots,f_k)=\id_e$ if $f_i=\id_e$ for some $i$, and
\item $\delta_i=\id$ if either $c_i,c'_i$ or any of the other $c_j$'s is $e$.
\end{itemize}
The transformations $\delta_i$  must make the following diagrams commute. 
\begin{enumerate}
\item (Compatibility of $\delta_i$ and $\alpha$.)\[\xymatrixcolsep{2cm}\xymatrix{ \left(F(c_i)\oplus F(c_i')\right)\oplus F(c_i'') \ar[d]_-{\delta_i\oplus \id}\ar[r]^{\alpha} & F(c_i)\oplus \left( F(c_i')\oplus F(c_i'')\right)\ar[d]^{\id \oplus \delta_i}\\
F(c_i\oplus c_i')\oplus F(c_i'') \ar[d]_{\delta_i} & F(c_i)\oplus F(c_i'\oplus c_i'')\ar[d]^-{\delta_i}\\
F( (c_i\oplus c_i') \oplus c_i'') \ar[r]_{F(\alpha)} & F(c_i\oplus (c_i'\oplus c_i'')) }
\]
\item (Compatibility of $\delta_i$ and $\gamma$.) \[\xymatrix{ F(c_i)\oplus F(c_i') \ar[d]_{\delta_i} \ar[r]^\gamma  & F(c'_i)\oplus F(c_i) \ar[d]^{\delta_i} \\
F(c_i\oplus c_i') \ar[r]_{F(\gamma)} & F(c_i'\oplus c_i)} \]

\item (Compatibility of $\delta_i$ and $\delta_j$.)  For all $i< j$
\[
\def\objectstyle{\scriptstyle}
\def\labelstyle{\scriptstyle}
\xy
(0,0)*+{\left(F(c_i,c_j)\oplus (F(c_i',c_j)\oplus F(c_i,c_j'))\right)\oplus F(c_i',c_j')}="A";
(10,20)*+{(F(c_i,c_j)\oplus F(c_i',c_j))\oplus (F(c_i,c_j')\oplus F(c_i',c_j'))}="B";
(0,-20)*+{\left(F(c_i,c_j)\oplus (F(c_i,c_j')\oplus F(c_i',c_j))\right)\oplus F(c_i',c_j')}="C";
(10,-40)*+{(F(c_i,c_j)\oplus F(c_i,c'_j))\oplus (F(c'_i,c_j)\oplus F(c_i',c_j'))}="D";
(70,20)*+{F(c_i\oplus c_i',c_j)\oplus F(c_i\oplus c_i',c_j')}="E";
(70,-40)*+{F(c_i,c_j\oplus c_j')\oplus F(c_i',c_j\oplus c_j')}="F";
(80,-10)*+{F(c_i\oplus c_i',c_j\oplus c_j')}="G";
{\ar^{\alpha} "A";"B"};
{\ar^-{\delta_i\oplus \delta_i} "B";"E"};
{\ar^-{\delta_j} "E";"G"};
{\ar_-{(\id\oplus \gamma) \oplus \id} "A";"C"};
{\ar_{\alpha} "C";"D"};
{\ar_-{\delta_j\oplus \delta_j} "D";"F"};
{\ar_-{\delta_i} "F";"G"};
\endxy
\]
\end{enumerate}
Here $\alpha$ denotes a composite of instances of the associator, which by the coherence of symmetric monoidal categories is uniquely determined by the two parenthesizations.

A $k$-linear natural transformation between $k$-linear functors $F$ and $G$ is a natural transformation $\phi\colon F\to G$ commuting with all the $\delta_i$'s in the sense that 
\begin{equation}\label{eqn:map_of_klinear}
\xymatrix{ F(c_i)\oplus F(c_i')\ar[r]^{\delta_i^F}\ar[d]_{\phi\oplus\phi} & F(c_i\oplus c_i')\ar[d]^{\phi}\\
G(c_i)\oplus G(c_i') \ar[r]_{\delta_i^G} &G(c_i\oplus c_i')
}
\end{equation}
commutes for every $i$.  Additionally we require that $\phi(c_1,\dotsc, c_k)=\id_e$ whenever any of the $c_i$'s is $e$.
\end{defn}

We describe a multicategory whose objects are symmetric monoidal categories with strict unit. 
\begin{defn}
Let $\mcsmc$ be the categorically-enriched multicategory whose objects are strictly unital symmetric monoidal categories and whose categories of $k$-morphisms $(\cC_1,\dotsc,\cC_k)\to \cD$ are given by the categories of $k$-linear functors and $k$-linear natural transformations.  The $\Sigma_k$-action on $k$-linear functors is given by acting on the indices $1,\dotsc, k$.  If we have multilinear functors $F_i\colon (\B_{i1}, \dots,  \B_{ik_i})\to \cC_i$ for $1\leq i\leq n$ and $G\colon(\cC_1,\dots, \cC_n)\to \D$, their composite is the $k_1+\dots+k_n$-linear functor 
\[\Gamma(G;F_1,\dots,F_n)=G\circ (F_1\times \dots\times F_n)\]
with distributivity transformations $\delta_s$ given as follows.  
The tuple $(k_1,\dots,k_n)$ is a partition of the $k_1+\dots+k_n$ ``inputs'' of $G\circ(F_1\times\dots\times F_n)$ into $n$ sets, and we fix $i$ so that $s$ is in the $i$th set of this partition; that is, $k_1+\dots+k_{i-1}<s\leq k_1+\dots+k_i$.  Then let $j=s-(k_1+\dots+k_{i-1})$, so that $s$ is the $j$th element in this set of the partition.
Then $\delta_s$ is 
\[ G(F_i(b_{ij}))\oplus G(F_i(b'_{ij})) \xto{\delta_i^G} G(F_i(b_{ij})\oplus F_i(b'_{ij})) \xto{G(\delta^{F_i}_j)} G(F_i(b_{ij}\oplus b'_{ij})).\]
It is relatively straightforward to check that these distributivity transformations satisfy the required diagrams.  Notice that the unit conditions for both $\Gamma(G; F_1,\dotsc,F_n)$ and the $\delta_i$'s automatically hold.
\end{defn}

The raison d'\^etre for Elmendorf and Mandell's version of $K$-theory is to handle multiplicative structures by showing that their construction of $K$-theory is a multifunctor from the multicategory of permutative categories to the multicategory of symmetric spectra.  We need the analogous result about our slightly adapted construction.  

\begin{thrm}[cf. {\cite[Theorem 6.1]{EM2006}}] \label{thrm:EMmultifunctortoEcat}
The construction of \cref{EMdefn} extends to an enriched multifunctor 
\[ \mcsmc \to \Ecat\]
and thus we obtain an enriched multifunctor
\[ \EMk \colon \mcsmc \to \Ecat \to \Spec(\scat),\] 
which at the level of objects is given by the construction in \cref{EMKtheorydefn}.
\end{thrm}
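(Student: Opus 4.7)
The plan is to follow the blueprint of the proof of \cite[Theorem~6.1]{EM2006}, inserting the associator isomorphisms $\alpha$ wherever the permutative-category argument uses strict associativity.  The key new ingredient is the compatibility of the distributivity transformations $\delta_i$ with $\alpha$ from \cref{defn:klinear}, which precisely supplies the missing coherence once the construction passes from permutative categories to strictly unital symmetric monoidal categories.

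First, for each strictly unital symmetric monoidal category $\cC$, I would promote $\overline{\cC}$ to an $\E_*$-category by precomposing the $n$-variable construction of \cref{EMdefn} with the standard embedding $\Delta^{\op}\to \F$ of \cref{rem:f_simplicial_circle} in each variable, so that at $\langle \mathbf{m}\rangle = ([m_1],\dots,[m_r])\in\E$ the value is $\overline{\cC}(\fps{m_1},\dots,\fps{m_r})$.  Functoriality in $\E$ is checked on the three generating types of morphisms from \cref{factorE}: tuples $\langle \hat\beta\rangle$ act by precomposition with simplicial operators, permutations $\sigma$ act by permuting variables of $\overline{\cC}$, and inclusions $\inc_r$ act by inserting an extra variable indexed by the one-point pointed set, trivially by Pointedness~(1).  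The basepoint axiom for $\E_*$-categories (\cref{defn:Ecat}) holds because if $[m_i]=[0]$ then $\fps{0}=\ast$, so every indexing subset in that variable is empty and the whole system is forced to the basepoint.

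Second, I extend this assignment to $k$-ary morphisms.  Given a $k$-linear $F\colon(\cC_1,\dots,\cC_k)\to\cD$ and tuples $\langle\mathbf{m}_i\rangle$ of lengths $r_i$, the induced functor
\[F_*\colon \overline{\cC_1}(\langle\mathbf{m}_1\rangle)\times\cdots\times\overline{\cC_k}(\langle\mathbf{m}_k\rangle)\to\overline{\cD}(\langle\mathbf{m}_1\rangle\odot\cdots\odot\langle\mathbf{m}_k\rangle)\]
sends $(\{C^i,\rho^i\})_{i=1}^k$ to the system with underlying objects $F(C^1_{\langle S^1\rangle},\dots,C^k_{\langle S^k\rangle})$ and whose structural isomorphism at an index $s$ lying in the $i$-th block at local position $j$ is the composite $F(\rho^i_{\langle S^i\rangle,j,T,U})\circ \delta_i$.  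The heart of the argument is verifying axioms (1)--(5) of \cref{EMdefn} for this induced system.  Pointedness is immediate from the unit conditions on $F$ and $\delta_i$.  Symmetry follows by combining the symmetry of $\rho^i$ with the compatibility of $\delta_i$ and $\gamma$.  The associativity axiom is the main obstacle: the required diagram picks up additional instances of $\alpha$ compared with the permutative setting, and these are absorbed exactly by the compatibility of $\delta_i$ with $\alpha$ in \cref{defn:klinear}.  The coherence axiom splits into two cases according to whether the two outer indices lie in the same block of the concatenation (in which case it reduces to coherence for a single $\rho^i$) or in different blocks (in which case it reduces to the compatibility of $\delta_a$ with $\delta_b$).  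Naturality in $\E^k$ and the basepoint conditions required by \cref{sect:Ecat} are then direct from the formulas.

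Third, a $k$-linear natural transformation $\phi\colon F\Rightarrow G$ induces a $k$-ary cell in $\Ecat$ whose component at $(\{C^i,\rho^i\})$ is $\phi_{C^1_{\langle S^1\rangle},\dots,C^k_{\langle S^k\rangle}}$; that these components form a morphism of systems in $\overline{\cD}$ is exactly the content of diagram \eqref{eqn:map_of_klinear}.  Compatibility of the assignment $F\mapsto F_*$ with composition, identities, and the $\Sigma_k$-action is preserved on the nose by the formulas, yielding a categorically enriched multifunctor $\mcsmc\to\Ecat$.  Composing with the multifunctor $I$ of \cref{thrm:multifunctorEcattoSpec} produces $\EMk$, whose values on objects agree with \cref{EMKtheorydefn} by construction.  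Thus the only genuinely new work beyond \cite{EM2006} is the associativity chase, and this is forced to commute by the single new axiom built into \cref{defn:klinear}.
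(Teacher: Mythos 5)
Your proposal is correct and takes essentially the same route as the paper, which likewise obtains the $\E_*$-structure by precomposing with $S^1\colon \Delta^{\op}\to\F$ in each variable (equivalently, restricting the Elmendorf--Mandell multifunctor into $\Gcat$ along $\E\to\cG$) and observes that the argument of \cite[Theorem 6.1]{EM2006} uses only the strict unit, with the new compatibility of $\delta_i$ and $\alpha$ from \cref{defn:klinear} absorbing the associators exactly where you say it does. One small imprecision: the inclusion $\inc_r$ inserts the simplicial degree $[1]$, i.e.\ the variable $\fps{1}\cong S^0$ with one non-basepoint element, so its image is the extension isomorphism of \cref{lemma:smcKextensionfunctor} rather than an insertion of the terminal pointed set; Pointedness~(1) only forces the entries indexed by $\emptyset$ in the new slot.
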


\begin{proof}
This essentially follows from Theorem 6.1 of \cite{EM2006}.  There the authors prove that the construction in \cref{EMdefn} gives an enriched multifunctor from permutative categories to $\Gcat$, where $\cG$ is the category obtained by applying the Grothendieck construction to the functor $\F^*\colon \Inj\to \cat$ that sends $\ul{r}\in\Inj$ to $\F^r$.  Recall from \cref{rem:f_simplicial_circle} that the standard simplicial circle can be considered as a functor $S^1\colon \Delta^\op \to \F$, and thus induces a natural transformation ${S^1}^*\colon {\Delta^\op}^*\Rightarrow \F^*$ between the functors defining $\E$ and $\cG$.  This induces a basepoint-preserving functor $\E_*\to \cG_*$ and precomposition with this functor yields multifunctor $\Gcat\to\Ecat$.  The proof of \cite[Theorem 6.1]{EM2006} involves constructing an enriched multifunctor from $\Gcat$ into $\Spec(\scat)$; this multifunctor factors through $\Ecat$, although the authors don't make that explicit.

Key to the work in this paper is that making \cref{EMdefn} into a functor from permutative categories to $\Gcat$ requires only that the unit in the permutative category be strict and not that the associator also be strict.  Hence the proof of \cite[Theorem 6.1]{EM2006} extends to strictly unital symmetric monoidal categories.
\end{proof}

In order to make the proof of \cref{sect:comparison} explicit, we unpack the construction of the functor in \cref{thrm:EMmultifunctortoEcat}.  Let $\cC$ be a strictly unital symmetric monoidal category.  Then the functor $\ol{\cC}\colon \E\to \cat_\ast$ is defined as follows.  As we note in \cref{defn:simplicial_circle}, the simplicial circle $S^1$ sends $[m]\in\Delta^\op$ to the finite pointed set $\fps{m}=\{0,\dots,m\}$ with 0 as the basepoint. Thus  the functor $\ol{\cC}$ sends an object $\langle \mb{m}\rangle=([m_1],\dots,[m_n])$ in $\E$ to the category $\ol{\cC}(\fps{m_1},\dots,\fps{m_n})$.   Hence if any $m_i=0$, $\ol{\cC}(\langle \mb{m}\rangle)$ is the terminal category $\ast$.

The image of a map of the form $(\inc_r,\id)$ in $\E$ is given by an extension functor as in \cite[page 184]{EM2006}.  
\begin{lemma}\label{lemma:smcKextensionfunctor}
For finite sets $M_1,\dots, M_n$, there is an isomorphism of categories $e\colon \ol{\cC}(M_1,\dots,M_n)\to \ol{\cC}(M_1,\dots,M_n,\fps{1})$ defined by sending a system $\{C,\rho\}\in \ol{C}(M_1,\dots,M_n)$ to the system $\{ C^e,\rho^e\}$ where 
\begin{align*}
C^e_{\langle S_1,\dots, S_n,\{1\}\rangle}&=C_{\langle S_1,\dots,S_n\rangle},&
 \rho^e_{\langle S_1,\dots,S_n,\{1\}\rangle,i,T,U}&=\begin{cases}\rho_{\langle S_1,\dots,S_n\rangle,i,T,U} &i<n+1\\ \mathrm{id} & i=n+1\end{cases}\\
C^e_{\langle S_1,\dots, S_n,\emptyset\rangle}&=e, &\rho_{\langle S_1,\dots,S_n,\emptyset\rangle,i,T,U}&=\mathrm{id}.
\end{align*}
The inverse isomorphism sends $\{C,\rho\}\in \ol{\cC}(M_1,\dots,M_n,\fps{1})$ to the system given by dropping the $\{1\}$ from list $\langle S_1,\dots,S_n,\{1\}\rangle$.
\end{lemma}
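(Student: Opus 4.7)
The proof is a direct, essentially bookkeeping verification that the extension assignment $e$ is well-defined and that the restriction assignment is its two-sided inverse. The essential observation driving the plan is that $\fps{1}$ has a unique non-basepoint element, namely $1$, so the only non-empty basepoint-free subset of $\fps{1}$ is $\{1\}$ itself, and in particular there is no way to write $\{1\}$ as a disjoint union of two non-empty subsets. This means that adding a final coordinate of the form $\fps{1}$ carries no genuinely new combinatorial data.

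The plan is to first verify, given a system $\{C,\rho\}$ in $\ol{\cC}(M_1,\dots,M_n)$, that the prescribed $\{C^e,\rho^e\}$ satisfies all five axioms of \cref{EMdefn}. Pointedness~1 holds because any tuple $\langle S_1,\dots,S_n,\emptyset\rangle$ gives $e$ by definition, and any tuple $\langle S_1,\dots,S_n,\{1\}\rangle$ with some $S_k=\emptyset$ gives $C_{\langle S_1,\dots,S_n\rangle}=e$ by Pointedness~1 for $\{C,\rho\}$. Pointedness~2 is immediate from the case split in the definition. The Symmetry and Associativity axioms for $i<n+1$ reduce instantly to the corresponding axioms for $\rho$, while for $i=n+1$ the only possible decomposition of $S_{n+1}\subseteq\fps{1}\setminus\{*\}$ has one of $T,U,V$ empty, so every $\rho^e$ appearing in those diagrams is forced to be an identity by Pointedness~2, and the diagrams commute trivially. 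For the coherence hexagon relating $\rho^e_{i,T,U}$ and $\rho^e_{j,V,W}$, the case $i,j<n+1$ reduces to coherence for $\rho$; the case where one index equals $n+1$ again forces one of $V,W$ (or $T,U$) to be empty, collapsing all the relevant $\rho^e$'s to identities and reducing the hexagon to a trivial commutation or to an instance of the original axioms.

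Next I would check that $e$ is functorial by setting $f^e_{\langle S_1,\dots,S_n,\{1\}\rangle}=f_{\langle S_1,\dots,S_n\rangle}$ and $f^e_{\langle S_1,\dots,S_n,\emptyset\rangle}=\id_e$; compatibility with the $\rho^e$'s follows coordinate-by-coordinate from compatibility of $f$ with the $\rho$'s. Finally, the restriction assignment, sending $\{C,\rho\}\in\ol{\cC}(M_1,\dots,M_n,\fps{1})$ to its restriction along $S_{n+1}=\{1\}$, is trivially functorial, and the two assignments are mutually inverse: in one direction this is true by construction on the piece indexed by $\{1\}$, and the other piece (indexed by $\emptyset$) is forced to take the values prescribed by $e$ by Pointedness~1 and~2 applied to $\{C,\rho\}$.

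The only step requiring genuine attention is the coherence axiom in the presence of the new index $n+1$; but as outlined above, the fact that $\{1\}$ admits no non-trivial partition means every occurrence of $\rho^e_{-,n+1,T,U}$ in a potentially problematic diagram is forced to be the identity, so no actual calculation is needed, only careful case analysis.
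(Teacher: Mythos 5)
Your proposal is correct and is exactly the routine verification that the paper leaves implicit (the paper embeds the construction in the lemma statement and defers to the analogous construction in Elmendorf--Mandell, giving no separate proof). You correctly identify the one point of substance---that $\{1\}$ admits no partition into two nonempty sets, so every $\rho^e$ in the new coordinate is forced to be the identity by Pointedness~2 and the strict unit, making all new axioms collapse and the two assignments mutually inverse.
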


The image of a map in $\E$ of the form $(\sigma,\id)$ for $\sigma\in \Sigma_r$ is simply given by permuting the sets in the tuple $(\fps{m_1},\dots,\fps{m_r})$.   The image of a map in $\E$ of the form $(\id, \langle\hat{\beta}\rangle)$ is given by the multisimplicial structure referenced just before \cref{EMKtheorydefn}. That is, if $\langle\hat{\beta}\rangle=(\hat{\beta}_1,\dots,\hat{\beta}_n)\colon \langle \mb{m}\rangle\to\langle\mb{m}'\rangle$ is a map in $(\Delta^\op)^n$ and $\beta^*_i$ are the images of the $\beta_i$ under $S^1$, then $\ol{\cC}(\id,\langle\hat{\beta}\rangle)\colon \ol{\cC}(\langle \mb{m}\rangle)\to \ol{\cC}(\langle \mb{m}'\rangle)$ is the map induced by the tuple $(\beta^*_1,\dots,\beta^*_n)$ of maps of finite pointed sets.

\begin{rem}
An alternate way of proving that \cref{EMKtheorydefn} actually gives a multifunctor  is to observe that the construction in \cref{EMKtheorydefn} is a special case of the the $K$-theory of a small pointed  multicategories as defined by \cite{EM2}.  Specifically, \cite{EM2} shows that one can embed symmetric monoidal categories with strict unit in the category of pointed multicategories, and then apply the $K$-theory functor of a pointed multicategory as defined in \cite{EM2}.  This coincides with the definition we give above.
\end{rem}

\section{From Waldhausen to symmetric monoidal categories}\label{sec:waldtosymm}

In order to compare $K$-theory constructions, we first have to arrange for the two constructions to have comparable input.  A Waldhausen structure induces a symmetric monoidal structure, but to make this precise, one needs a predetermined choice of wedges. Let $\Sq\cC$ denote the set of commutative squares in $\cC$. 

\begin{defn}\label{defn:adjective}
 A \emph{Waldhausen category with choice of wedges} $(\cC,\omega)$ consists of a Waldhausen category $\cC$ together with a function 
 \[\omega\colon \Ob\cC \times \Ob\cC \rightarrow \Sq\cC\]
 such that 
 \begin{enumerate}
 \item for all $X,Y\in \Ob\cC$, $\omega(X,Y)$ is a pushout diagram 
 \[\xymatrix{
 \ast \ar[r] \ar[d] & X\ar[d]\\
 Y \ar[r]& Z;
 }\]
 \item for all $X$, 
 \[\omega(X,\ast)=\begin{matrix}\xymatrix{
 \ast \ar[r] \ar[d] & X\ar[d]^{\id}\\
 \ast \ar[r] & X
 }
\end{matrix}
 \qquad \text{and} \qquad\   \omega(\ast,X)=\begin{matrix}\xymatrix{
 \ast \ar[r] \ar[d] & \ast\ar[d]\\
 X \ar[r]_{\id}& X.
 }\end{matrix}\]
 \end{enumerate}
 Given such an $\omega$, we will use the following notation for the data of $\omega(X,Y)$:
 \[\xymatrix{
 \ast \ar[r] \ar[d] & X\ar[d]^{\iota_1}\\
 Y \ar[r]_-{\iota_2}& X\wed Y.
 }\]
 Note that the maps $\iota_1$ and $\iota_2$ are cofibrations by axioms (2) and (3) of \cref{defn:Wald_cat}.
 
 Given a function $\omega$ we have unique maps $\pi_1\colon X\wed Y \to X$ and $\pi_2\colon X \wed Y \to Y$ given by the universal property of the pushout, as follows.
 \[
 \xymatrix{
 \ast \ar[r] \ar[d] & X \ar[d]^{\iota_1}  \ar@/^1.5pc/[ddr]^{\id}\\
 Y \ar[r]_-{\iota_2} \ar@/_1.5pc/[drr] & X\wed Y \ar@{-->}[dr]^{\pi_1}\\
 & & X
 }
 \qquad
  \xymatrix{
 \ast \ar[r] \ar[d] & X \ar[d]^{\iota_1}  \ar@/^1.5pc/[ddr]\\
 Y \ar[r]_-{\iota_2} \ar@/_1.5pc/[drr]_{\id} & X\wed Y \ar@{-->}[dr]^{\pi_2}\\
 & & Y
 }
 \]
 The unlabeled maps are the unique maps that factor through $\ast$.
 
 A $k$-exact functor between  Waldhausen categories with choices of wedges is a $k$-exact functor of the underlying Waldhausen categories.
\end{defn}

 \begin{notn} The categorically-enriched multicategory $\mcdewald$ has as objects pairs $(\cC,\omega)$, of a small Waldhausen category and a choice of wedges on it, and categories of multimorphisms given by $k$-exact functors and natural transformations of such. No extra compatibility is imposed on the choices of wedges.
 \end{notn}

The following observation follows directly from the definition.

\begin{lemma}
 The underlying Waldhausen category of a Waldhausen category with choice of wedges gives the function on objects of a full and faithful multifunctor
 \[U \colon \mcdewald \rightarrow \mcwald.\]
\end{lemma}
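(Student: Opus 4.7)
The plan is to observe that by the very definition of $\mcdewald$, the choice of wedges $\omega$ contributes only to the data on objects and imposes no additional structure or conditions on morphisms. Hence the forgetful assignment $U$ that sends $(\cC,\omega)$ to $\cC$ induces an identity on morphism categories, from which fullness, faithfulness, and multifunctoriality all follow at once.

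More explicitly, I would proceed in three short steps. First, I would check that $U$ is well-defined on multimorphism categories: for any tuple $(\cC_1,\omega_1),\dots,(\cC_k,\omega_k),(\D,\omega)$ of objects in $\mcdewald$, the category $\mcdewald\bigl((\cC_1,\omega_1),\dots,(\cC_k,\omega_k);(\D,\omega)\bigr)$ is by definition equal to $\mcwald(\cC_1,\dots,\cC_k;\D)$, since a $k$-exact functor between Waldhausen categories with choice of wedges is defined to be a $k$-exact functor of underlying Waldhausen categories (with all natural transformations). Thus $U$ is the identity functor on each hom-category, which makes it trivially fully faithful.

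Second, I would verify that the multicategory structure on $\mcdewald$ is inherited from $\mcwald$. The identity $k$-exact functor at $(\cC,\omega)$ is just the identity on $\cC$; the multicomposition of $k$-exact functors, as defined just before \cref{multicatofWaldcats}, does not involve $\omega$; and likewise the $\Sigma_k$-action on multilinear functors is defined only in terms of the underlying functors and is independent of the choice of wedges. Hence $U$ strictly preserves identities, multicomposition, and symmetric group actions, so it is a symmetric multifunctor enriched in $\cat$.

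Finally, combining these two observations gives the lemma: $U$ is a categorically-enriched multifunctor, and it is the identity on each multimorphism category, so it is full and faithful. I do not anticipate any obstacle; the main point of the lemma is really to fix notation and record that forgetting $\omega$ costs nothing at the level of multimorphisms, so that subsequent constructions can freely move between $\mcdewald$ and $\mcwald$.
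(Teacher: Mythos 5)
Your proposal is correct and matches the paper, which simply notes that the lemma ``follows directly from the definition'': since $\mcdewald\bigl((\cC_1,\omega_1),\dots,(\cC_k,\omega_k);(\D,\omega)\bigr)$ is by definition equal to $\mcwald(\cC_1,\dots,\cC_k;\D)$, the forgetful assignment is the identity on multimorphism categories and hence a fully faithful multifunctor. Your write-up just makes explicit the routine checks the paper leaves implicit.
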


\begin{rem} Note that given a Waldhausen category $\cC$, axioms (3) and (4) of \cref{defn:Wald_cat} imply that there exists at least one function $\omega$ making $(\cC,\omega)$ into a Waldhausen category with choice of wedges. In particular, this shows that $U$ is surjective on objects, and hence gives an equivalence of multicategories. Moreover, if $(\cC,\omega), (\cC,\omega')\in\mcdewald$ have the same underlying Waldhausen category $\cC$, then the identity functor $\cC \to \cC$ gives an isomorphism $(\cC,\omega)\to (\cC,\omega')$ in $\mcdewald$. Because all choices of $\omega$ are equivalent, we often omit $\omega$ from the notation.
\end{rem}

We now construct a multifunctor $\Lambda \colon \mcdewald \to \mcsmc$ which at the level of objects will send a pair $(\cC,\omega)$ to a strict symmetric monoidal structure on $\cC$.

\begin{const}\label{const:wald_to_smc}
Let $(\cC,\omega)$ be a Waldhausen category with a choice of wedges, as defined in \cref{defn:adjective}. Recall that for objects $X,Y$ in $\cC$, we denote by $X\wed Y$ the bottom right corner of the pushout diagram $\omega(X, Y)$. 

The universal property of the pushout implies this assignment extends to a functor $\wed \colon \cC \times \cC \to \cC$. Indeed, if $f\colon X \to X'$ and $g\colon Y \to Y'$ are morphisms in $\cC$, $f\wed g$ is defined to be the unique map $X\wed Y \to X'\wed Y'$ that makes the diagrams
\begin{equation}\label{iotanatural}
\begin{matrix}\xymatrix{
X \ar[r]^-{f} \ar[d]_{\iota_1} & X' \ar[d]^{\iota_1}\\
X \wed Y \ar[r]_-{f\wed g} & X'\wed Y'
}\end{matrix}
\quad \text{and} \quad
\begin{matrix}
\xymatrix{
Y \ar[r]^-{g} \ar[d]_{\iota_2} & Y' \ar[d]^{\iota_2}\\
X \wed Y \ar[r]_-{f\wed g} & X'\wed Y'.
}
\end{matrix}
\end{equation} 
commute. The uniqueness implies that this assignment respects composition and identities. Note that \cref{iotanatural} means that $\iota_1$ and $\iota_2$ are natural transformations. The universal property of the pushout can be used to prove that the projections $\pi_1$ and $\pi_2$ are natural as well, in the sense that the diagrams
\begin{equation}\label{pinatural}
\begin{matrix}\xymatrix{
X \wed Y \ar[r]^-{f\wed g} \ar[d]_{\pi_1}& X'\wed Y' \ar[d]^{\pi_1}\\
X \ar[r]_-{f}  & X' \\
}
\end{matrix}
\quad \text{and} \quad
\begin{matrix}\xymatrix{
X \wed Y \ar[r]^-{f\wed g} \ar[d]_{\pi_2}& X'\wed Y' \ar[d]^{\pi_2}\\
Y \ar[r]_-{g}  & Y' \\
}
\end{matrix}
\end{equation}
commute.

Given objects $X,Y$ in $\cC$, the universal property of pushouts implies the existence of an isomorphism $\gamma_{X,Y}\colon X \wed Y \to Y\wed X$, determined by the property of being the unique map such that $\gamma_{X,Y}\circ i_i=i_2$ and $\gamma_{X,Y}\circ i_2=i_1$. Universality implies that $\gamma$ is a natural transformation and that $\gamma_{Y,X}\circ \gamma_{X,Y}=\id_{X\wed Y}$.

Finally, for objects $X,Y,Z$ in $\cC$, universality implies the existence of an isomorphism $\alpha_{X,Y,Z} \colon (X \wed Y) \wed Z \to X \wed (Y \wed Z)$, which can be determined uniquely in terms of its interaction with the maps from $X$, $Y$ and $Z$ to the two pushouts. Uniqueness is used to prove that $\alpha$ is a natural transformation.
\end{const}

\begin{prop}
 Given a Waldhausen category with choice of wedges $(\cC, \omega)$, the functor $\wed$ and the natural transformations $\gamma$ and $\alpha$ of \cref{const:wald_to_smc} make $\Lambda(\cC,\omega)=(\cC,\wed,\gamma,\alpha,\ast)$ into a strictly unital symmetric monoidal category. This assignment on objects extends to give a categorically-enriched multifunctor
 \[\Lambda \colon \mcdewald \rightarrow \mcsmc.\]
\end{prop}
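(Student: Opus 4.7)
The plan is to verify the proposition in three phases, in each relying heavily on the universal property of pushouts to deduce uniqueness of comparison maps.

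First, I would confirm that $(\cC, \wed, \gamma, \alpha, \ast)$ is a strictly unital symmetric monoidal category. The strict unit condition is built directly into axiom (2) of \cref{defn:adjective}: $X \wed \ast = X = \ast \wed X$ with unitors equal to identities, and the universal property forces the instances of $\gamma$ and $\alpha$ involving $\ast$ to also be identities. For the symmetry, $\gamma_{Y,X} \circ \gamma_{X,Y} = \id$ is exactly the uniqueness claim for maps out of the pushout $X \wed Y$ that restrict to $\iota_1$ and $\iota_2$. The pentagon and hexagon axioms reduce analogously: both sides of each coherence equation are maps out of an iterated pushout and are easily seen to be induced by identical cocones, hence equal by uniqueness.

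Second, I would construct the distributivity transformations $\delta_i$ for a $k$-exact functor $F\colon \A_1 \times \cdots \times \A_k \to \B$. Since $F$ is exact in the $i$th variable and the map $\ast \hrto c_i$ is a cofibration, $F$ (in the $i$th slot, with the other slots fixed) preserves the defining pushout $\omega_{\A_i}(c_i, c_i')$. This produces a pushout square in $\B$ with apex $F(\ldots, c_i \wed c_i', \ldots)$; on the other hand, $\omega_\B(F(c_i), F(c_i'))$ is also a pushout of the same span, so there is a unique comparison isomorphism $\delta_i$. The unit conditions on $\delta_i$ are forced by the strict unit axioms for $\omega$. The three coherence diagrams in \cref{defn:klinear} (compatibility of $\delta_i$ with $\alpha$, with $\gamma$, and of $\delta_i$ with $\delta_j$) again reduce to the uniqueness of maps out of iterated pushouts: in each case, both routes through the diagram are determined by the same combination of instances of $\iota_1$, $\iota_2$, and $F$ applied to cofibrations of the form $\iota_1$, $\iota_2$ in the various $\A_\ell$. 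The $(\delta_i, \delta_j)$-diagram requires a double pushout and uses that $F$ is exact in \emph{each} variable separately, so that iteratively applying $F$ produces the expected double pushout in $\B$.

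Third, I would verify the multifunctorial structure. A natural transformation $\phi\colon F \Rightarrow G$ between $k$-exact functors yields a $k$-linear transformation $\Lambda(\phi)$ with the same components; compatibility with $\delta_i$ (\cref{eqn:map_of_klinear}) follows from naturality of $\phi$ together with the universal property that defines $\delta_i^F$ and $\delta_i^G$. The basepoint condition $\phi = \id_\ast$ when any input is $\ast$ follows because $F(\ldots, \ast, \ldots) = \ast = G(\ldots, \ast, \ldots)$. The identity functor maps to the identity $1$-linear functor with $\delta = \id$. For multicomposition $\Gamma(G; F_1, \ldots, F_n)$, one checks that the $\delta_s$ prescribed by the formula in $\mcsmc$ agrees with the $\delta_s$ produced directly from the $k$-exactness of the composite $G \circ (F_1 \times \cdots \times F_n)$; both are the unique comparison into a pushout in $\B$, hence equal. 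The $\Sigma_k$-equivariance is immediate from the symmetry of the cartesian product.

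The main obstacle is the bookkeeping in the second phase, especially the $(\delta_i, \delta_j)$-compatibility for $i \neq j$, because one must track an iterated pushout obtained by applying $F$ in two slots in either order. The conceptually cleanest way to handle all the coherence verifications uniformly is to observe that every morphism appearing in these diagrams is a canonical comparison map between two presentations of the same iterated pushout, and then invoke the universal property once in each case rather than chasing elements.
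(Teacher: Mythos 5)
Your proposal is correct and follows essentially the same route as the paper's proof: the strict unit conditions are read off from axiom (2) of the definition of $\omega$, every coherence diagram (pentagon, hexagon, and the compatibilities of the $\delta_i$) is settled by the uniqueness clause of the pushout's universal property, $\delta_i$ is the unique comparison isomorphism between the chosen pushout and the pushout produced by exactness of $F$ in the $i$th variable, and $\Lambda(\phi)=\phi$. The paper leaves the coherence and multicomposition checks to ``careful inspection,'' so your more explicit treatment of the $(\delta_i,\delta_j)$ diagram and of $\Gamma(G;F_1,\dots,F_n)$ is a welcome elaboration rather than a departure.
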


\begin{proof}
 By definition, $X\wed \ast = X$ and $i_1=\id_X$, and similarly, $\ast \wed X = X$ with $i_2=\id_X$. It follows that $\gamma_{X,\ast}=\gamma_{\ast,X}=\id_X$, and that $\alpha_{X,\ast,Y}=\id_{X\wed Y}$, showing that the associator and the symmetry interact appropriately with the strict unit. The pentagon axiom and the hexagon axiom follow from the uniqueness of the universal property of the pushout.
 
 Let $F\colon ((\A_1,\omega_1),\dots,(\A_k,\omega_k)) \to (\B,\omega)$ be a $k$-exact functor between Waldhausen categories with choices of wedges. This means in particular that $F$ is a functor $\A_1 \times \cdots \times \A_k \to \B$ that is exact in each variable. We will construct $\Lambda(F)=(F,\delta_i)$. Exactness in each variable implies that $F$ satisfies the first two unitality conditions of \cref{defn:klinear}, the second one following from the fact that $\ast$ is a zero object.
 
For $i=1,\dots,k$, since $F$ preserves pushouts along cofibrations on each variable, we have that $F(X_1, \dots , X_i\wed X'_i, \dots, X_k)$ is a pushout for the diagram
\[
\xymatrix{
\ast \ar[d] \ar[r] & F(X_1,\dots,X_i,\dots,X_k)\\
F(X_1,\dots,X'_i,\dots,X_k)
}
\]
Hence, there exists an isomorphism 
\[ F(X_1,\dots,X_i,\dots,X_k) \wed F(X_1,\dots,X'_i,\dots,X_k) \rightarrow F(X_1,\dots,X_i\wed X'_i,\dots, X_k)\]
with source our chosen pushout given by $\omega$: this is the map $\delta_i$. This map is the unique such map that is compatible with the chosen inclusions into the wedge. Careful inspection of the universal properties of the pushout will show that $\delta_i$ is natural, that the collection $\{\delta_i\}_{i=1,\dots,k}$ satisfies the axioms of \cref{defn:klinear}, and moreover, that this assignment is compatible with composition in the multicategories. 

Finally, given $k$-exact functors $F,G \colon ((\A_1,\omega_1),\dots,(\A_k,\omega_k)) \to (\B,\omega)$ and a natural transformation $\phi\colon F \Rightarrow G$, one can easily check that $\Lambda(\phi)=\phi$ satisfies \cref{eqn:map_of_klinear}, thus giving a map between $\Lambda(F)$ and $\Lambda(G)$.
\end{proof}

\begin{rem}
Note that the construction of $\Lambda(F)$ only made use of exactness in each variable. The conditions of cubical cofibrancy, which are necessary for the construction of $S^{()}_{\bullet,\dots,\bullet}$, are not needed here. Compare Barwick's infinity-categorical construction of the $K$-theory of a Waldhausen category in \cite{BarwickKthryhighercats}. 
\end{rem}

\begin{observation} Let $(\cC, \omega)$ be a Waldhausen category with choice of wedges. Then $w\Lambda(\cC,\omega)$, the category given by restricting morphisms in $\Lambda(\cC,\omega)$ to only the weak equivalences in $\cC$, is again a symmetric monoidal category.  This follows from the gluing lemma for weak equivalences in a Waldhausen category.
\end{observation}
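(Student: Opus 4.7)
The symmetric monoidal structure on $\Lambda(\cC,\omega)=(\cC,\wed,\gamma,\alpha,*)$ is already constructed, so the content of the observation is that when we restrict the morphisms to $w\cC$, the data $(\wed,\gamma,\alpha,*)$ still defines a symmetric monoidal structure. Three things must be checked: (a) that $\wed$ restricts to a functor $w\cC\times w\cC\to w\cC$; (b) that the components of $\gamma$ and $\alpha$ are morphisms in $w\cC$; and (c) that the coherence axioms and strict unit conditions continue to hold on $w\cC$. Points (b) and (c) are immediate: the components of $\gamma$ and $\alpha$ are isomorphisms, hence lie in $w\cC$ by axiom (1) of \cref{defn:Wald_cat}, and the pentagon, hexagon, and strict unit diagrams are the same equations already verified in $\cC$.

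The only nontrivial content is (a): given weak equivalences $f\colon X\xto{\sim} X'$ and $g\colon Y\xto{\sim} Y'$ in $\cC$, we must show that the induced map $f\wed g\colon X\wed Y\to X'\wed Y'$ is a weak equivalence. By construction, $X\wed Y$ is the pushout of $X\hlto * \hrto Y$ (the horizontal maps are cofibrations by axiom (2) of \cref{defn:Wald_cat}), and analogously for $X'\wed Y'$; the map $f\wed g$ is precisely the map induced on pushouts by the diagram
\[
\xymatrix{
X \ar[d]_{f}^{\sim} & * \ar@{_{(}->}[l] \ar@{^{(}->}[r] \ar[d]^{=} & Y \ar[d]^{g}_{\sim}\\
X' & * \ar@{_{(}->}[l] \ar@{^{(}->}[r] & Y'.
}
\]
This is exactly the configuration required by the gluing axiom (axiom (4) of \cref{defn:Wald_cat}): the horizontal maps on the left are cofibrations, the three vertical maps are weak equivalences, and we conclude that the induced map on pushouts $X\wed Y\to X'\wed Y'$ is a weak equivalence.

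With (a), (b), and (c) in hand, $(\cC, \wed, \gamma, \alpha, *)$ restricts to a symmetric monoidal structure on $w\cC$, proving the claim. No step is really an obstacle; the gluing lemma is used in its cleanest form, applied to a pushout whose ``spine'' is constant at $*$.
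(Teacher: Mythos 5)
Your proposal is correct and matches the paper's (one-line) justification: the paper simply cites the gluing lemma, and your argument correctly identifies $f\wed g$ as the map induced on pushouts over the constant $*$ spine, to which axiom (4) of the Waldhausen structure applies directly. The remaining points about $\gamma$, $\alpha$, and the coherence diagrams are handled exactly as one would expect.
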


\section{A multiplicative comparison of  Waldhausen and Segal $K$-theory}\label{sect:comparison}

We are now finally prepared to compare the multiplicative $K$-theory functors for Waldhausen and symmetric monoidal categories.

Specifically, we prove
\begin{thrm}\label{thrm:nattransmultifunctors} 
There is a multinatural transformation of categorically enriched multifunctors $\phi\colon \overline{(-)}\circ \Lambda \Rightarrow \sdot$  fitting into the following diagram:
\[\xymatrix{\mcdewald\ar[rr]^{\Lambda}\ar[dr]_-{S^{()}_{\bullet,\dotsc,\bullet}} \drrtwocell<\omit>{<-1>\phi}&& \mcsmc\ar[dl]^{\overline{(-)}}\\
&\Ecat&
}
\]
\end{thrm}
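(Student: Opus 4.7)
The plan is to construct, for each $(\cC,\omega)\in\mcdewald$, a $1$-ary morphism $\phi_{(\cC,\omega)}\colon \overline{\Lambda(\cC,\omega)}\to\sdot\cC$ in $\Ecat$, and then verify these assemble into a multinatural transformation. The conceptual bridge between the two sides is the identification of an arrow $ij\in\Ar[m]$ with the half-open interval $\{i+1,\dotsc,j\}\subset\{1,\dotsc,m\}$---that is, with a basepoint-free subset of $\fps{m}$. Under this identification, composable arrows in $\Ar[m]$ correspond to nested intervals, which decompose into three pieces: exactly the data the $\rho$'s in a system $\{C,\rho\}$ are set up to process.

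At the level of a fixed object $\langle\mb{m}\rangle=([m_1],\dotsc,[m_n])\in\E$, I would define
\[\phi_{(\cC,\omega),\langle\mb{m}\rangle}\colon \overline{\Lambda(\cC,\omega)}(\fps{m_1},\dotsc,\fps{m_n})\longrightarrow S^{(n)}_{m_1,\dotsc,m_n}\cC\]
on a system $\{C,\rho\}$ by producing the functor $A\colon\Ar[m_1,\dotsc,m_n]\to\cC$ with $A_{i_1j_1,\dotsc,i_nj_n}=C_{\langle\{i_1+1,\dotsc,j_1\},\dotsc,\{i_n+1,\dotsc,j_n\}\rangle}$. The action of $A$ on a morphism in the $k$th factor $(i_kj_k)\to(i'_kj'_k)$ uses the decomposition
\[\{i'_k+1,\dotsc,j'_k\}=\{i'_k+1,\dotsc,i_k\}\sqcup\{i_k+1,\dotsc,j_k\}\sqcup\{j_k+1,\dotsc,j'_k\}\]
as the inclusion of the middle wedge summand followed by the iterated $\rho$-isomorphism identifying the wedge with $C$ on the full interval; on morphisms of systems, $\phi$ simply reindexes components. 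Verifying the conditions of \cref{BM:iteratedSdotdefn} is then bookkeeping: vanishing at $i_k=j_k$ follows from pointedness of systems; cubical cofibrancy holds because wedge inclusions in $\Lambda(\cC,\omega)$ are cofibrations in $\cC$ (pushouts of $\ast\to X$) and the $\rho$'s are isomorphisms; and the pushout condition (3) is immediate since $\rho$ identifies $C_{\langle\dotsc,\{i_k+1,\dotsc,r\},\dotsc\rangle}$ with the chosen wedge of $C_{\langle\dotsc,\{i_k+1,\dotsc,j_k\},\dotsc\rangle}$ and $C_{\langle\dotsc,\{j_k+1,\dotsc,r\},\dotsc\rangle}$.

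Naturality of $\phi_{(\cC,\omega)}$ as a functor $\E\to\cat_*$ is checked on the generating morphisms of \cref{factorE}. Permutations act by permuting indices on both sides and match automatically. Extension morphisms $(\iota_r,\id)$ correspond to the extension isomorphisms of \cref{extension} and \cref{lemma:smcKextensionfunctor}, which agree after inspecting the empty-subset and zero-object formulas. Morphisms $(\id,\langle\hat\beta\rangle)$ in the $(\Delta^{\op})^r$-direction are compatible because the multisimplicial structure of $\sdot$ and the $\F$-action on $\overline{(-)}$ both factor through the combinatorics of $S^1\colon\Delta^{\op}\to\F$ from \cref{defn:simplicial_circle}. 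Basepoint preservation is automatic since the zero system (all $C_{\langle S\rangle}=\ast$) maps to the constant-at-$\ast$ functor, and when some $m_i=0$ both domain and codomain are the terminal category.

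Finally, to verify multinaturality for a $k$-exact $F\colon(\A_1,\dotsc,\A_k)\to\B$ in $\mcdewald$, the diagram of \cref{defn:multinaturaltrans} reduces to the equality $\phi_\B\circ\overline{\Lambda(F)}=\sdot(F)\circ(\phi_{\A_1},\dotsc,\phi_{\A_k})$ as $k$-ary morphisms in $\Ecat$. Unpacking, both sides send a tuple of systems to the functor on $\Ar[\mb{m}_1,\dotsc,\mb{m}_k]$ whose value at a tuple of arrows is $F$ applied to the tuple of $C^i$-values at the corresponding intervals. The structure maps agree because the distributivity isomorphism $\delta_i$ of $\Lambda(F)$ is the unique map out of the chosen pushout in $\B$ characterized by $\delta_i\circ\iota_s^\B=F(\iota_s^{\A_i})$ for $s=1,2$; this identity is precisely what makes the $\B$-side recipe $F(\rho^i)\circ\delta_i\circ\iota_s^\B$ equal to the $\A_i$-side recipe $F(\rho^i\circ\iota_s^{\A_i})$. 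Extension to $k$-ary cells is routine whiskering. I expect the main obstacle to be this on-the-nose (rather than just up-to-associator) agreement of the two structure-map recipes, which requires careful tracking of associativity isomorphisms in the strictly unital but non-strict symmetric monoidal setting; the cubical cofibrancy condition in the definition of $k$-exact functors, which makes the iterated $F$-images genuinely into iterated pushouts in $\B$, is what allows the $\delta$'s and $\rho$'s to be organized consistently.
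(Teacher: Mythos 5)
Your overall strategy is the paper's: identify an arrow $ij\in\Ar[m]$ with the interval $(i,j]=\{i+1,\dotsc,j\}$, define $A_{i_1j_1,\dotsc,i_nj_n}=C_{\langle(i_1,j_1],\dotsc,(i_n,j_n]\rangle}$, and then verify naturality in $\E$, basepoints, multinaturality on $k$-ary morphisms, and compatibility with cells, which is exactly the paper's proof outline. The gap is in the one place the construction has real content: the value of $A$ on morphisms. For a morphism $i_kj_k\to i'_kj'_k$ in $\Ar[m_k]$ one has $i_k\le i'_k$ and $j_k\le j'_k$, so your displayed decomposition $\{i'_k+1,\dotsc,j'_k\}=\{i'_k+1,\dotsc,i_k\}\sqcup\{i_k+1,\dotsc,j_k\}\sqcup\{j_k+1,\dotsc,j'_k\}$ is false: the first piece is empty because $i_k\le i'_k$, and the source interval $(i_k,j_k]$ is not contained in the target $(i'_k,j'_k]$ --- the two merely overlap in $(i'_k,j_k]$. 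More importantly, your recipe (``inclusion of the middle wedge summand followed by the iterated $\rho$-isomorphism'') only ever produces a split monomorphism into $C$ on a union interval; it never lands in $C_{(i'_k,j'_k]}$ and it can never be a genuine quotient. The correct map is the composite
$C_{(i_k,j_k]}\xrightarrow{\iota_1}C_{(i_k,j_k]}\vee C_{(j_k,j'_k]}\xrightarrow{\rho}C_{(i_k,j'_k]}\xrightarrow{\rho^{-1}}C_{(i_k,i'_k]}\vee C_{(i'_k,j'_k]}\xrightarrow{\pi_2}C_{(i'_k,j'_k]}$,
i.e., an extension on the right followed by a \emph{projection} on the left, using the maps $\pi_2$ supplied by the choice of wedges. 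Without the projection you cannot recover the vertical (quotient) maps of the flag in \cref{recollect1dim}, the maps $A_{i_kj_k}\to A_{j_kj_k}=\ast$ are not produced by your recipe, and condition (3) of \cref{BM:iteratedSdotdefn} --- which you call ``immediate'' --- cannot even be formulated, since the relevant square must be isomorphic to the pushout square $\ast\leftarrow C\rightarrow C\vee C'\rightarrow C'$ and that requires $\pi_2$.

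Two smaller points. First, having defined $A$ one coordinate at a time, you must check that applying a $k$-direction morphism and then an $l$-direction morphism agrees with the other order, so that $A$ is well defined on the product $\Ar[m_1,\dotsc,m_n]$; this uses the coherence axiom (5) of \cref{EMdefn} relating the $\rho$'s for different indices and is not automatic, especially once the maps involve both inclusions and projections. Second, your closing sentence misattributes the role of cubical cofibrancy: it is what guarantees that $\sdot F$ lands in $S^{(r)}_{\langle\mathbf{m}\rangle}\B$ at all, but the construction of the distributivity isomorphisms $\delta_i$ for $\Lambda(F)$ and the on-the-nose agreement of the two structure-map recipes in the multinaturality check use only exactness of $F$ in each variable (preservation of the relevant pushouts), together with the uniqueness clause of the universal property of the chosen wedges.
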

\begin{cor} Composition with the functor from $\Ecat$ to $\Spec(\scat)$ of \cref{thrm:multifunctorEcattoSpec} yields a multinatural transformation of categorically enriched multifunctors comparing Waldhausen $K$-theory and Segal $K$-theory:
\[\xymatrix{\mcdewald\ar[rr]^{\Lambda}\ar[dr]_-{S^{()}_{\bullet,\dotsc,\bullet}} \drrtwocell<\omit>{<-1>\phi}&& \mcsmc\ar[dl]^{\overline{(-)}}\\
&\Ecat\ar[d]_{I}&\\
&\Spec(\scat)
}
\]
\end{cor}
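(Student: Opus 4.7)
The plan is to derive this as a formal consequence of \cref{thrm:nattransmultifunctors} together with the multifunctoriality of $I$. The key general fact I would isolate first is that any categorically-enriched multifunctor $H\colon \bN\to\mathbf{P}$ preserves multinatural transformations: given $\psi\colon F\Rightarrow G$ between multifunctors $F,G\colon \bM\to\bN$, the components $(H\psi)_a := H(\psi_a)$ assemble into a multinatural transformation $H\psi\colon HF\Rightarrow HG$. To verify this, observe that the multinaturality square of \cref{defn:multinaturaltrans} for $H\psi$ at $(a_1,\dots,a_k;b)$ is obtained from the corresponding square for $\psi$ by applying the functor $H\colon \bN(F(a_1),\dots,F(a_k);F(b))\to\mathbf{P}(HF(a_1),\dots,HF(a_k);HF(b))$ (and likewise with $G$ in place of $F$), and then invoking that $H$ preserves composition and whiskering of multimorphisms so that both legs of the image square agree.

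Applying this general fact with $H=I\colon \Ecat\to\Spec(\scat)$ from \cref{thrm:multifunctorEcattoSpec} and with $\psi=\phi$ the multinatural transformation provided by \cref{thrm:nattransmultifunctors}, one obtains the desired $I\phi\colon I\circ \overline{(-)}\circ\Lambda\Rightarrow I\circ \sdot$. The diagram displayed in the corollary is the vertical pasting of the diagram of \cref{thrm:nattransmultifunctors} with the common terminal edge $I$; the 2-cell $\phi$ is transported to $I\phi$ by whiskering with $I$.

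This corollary is essentially formal, so there is no serious obstacle at this step; the real content is confined to \cref{thrm:nattransmultifunctors}. The only bookkeeping I would attend to is the identification of the upper composite $I\circ\overline{(-)}\circ\Lambda$ with the Elmendorf--Mandell spectrum $\EMk\circ\Lambda$ via \cref{EMKtheorydefn} and \cref{thrm:EMmultifunctortoEcat}, together with the observation that $I\circ \sdot$ is the variant of Waldhausen $K$-theory without restriction to weak equivalences, so that the comparison with $\waldk$ of \cref{prop:waldkcompositetospectra} is recovered by restricting along the inclusion $w\sdot\hookrightarrow \sdot$ before composing with $I$.
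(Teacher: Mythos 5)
Your proposal is correct and matches the paper's treatment: the paper offers no separate proof of this corollary, presenting it exactly as the formal consequence you describe, namely that whiskering the multinatural transformation $\phi$ of \cref{thrm:nattransmultifunctors} with the multifunctor $I$ of \cref{thrm:multifunctorEcattoSpec} again yields a multinatural transformation, since $I$ preserves composition of multimorphisms and hence carries the multinaturality squares for $\phi$ to those for $I\phi$. Your closing remark about the distinction between $I\circ\sdot$ and $\waldk=I\circ w\sdot$ is also accurate and is the point the paper returns to in \cref{sect:equivalence}.
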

Changing enrichments along the nerve and applying \cref{prop:changetosimpenrich} then yields a multinatural transformation of simplicially enriched multifunctors that land in $\Spec$.

A multinatural transformation between multifunctors relating multicategories whose objects are structured categories is inherently a rather complicated gadget, so before proving such a thing exists, we begin by unpacking the structure involved. This is an exegesis of \cref{defn:multinaturaltrans} for the particular multicategories and multifunctors of \cref{thrm:nattransmultifunctors}.

For each Waldhausen category with choice of wedges $(\cC,\omega)$, the component $\phi_{\cC}$ of the multinatural transformation $\phi$ is a $1$-ary morphism in $\Ecat$
\[ \phi_{\cC}\colon \overline{\Lambda\cC}\to S^{()}_{\bullet, \dots,\bullet}\cC.\]
Since the objects in $\Ecat$ are functors from $\E$ to $\cat_*$, such a map is itself a natural transformation of functors 
\[ \xymatrixcolsep{1.5cm}\xymatrix{  \E \rtwocell<5>^{\overline{\Lambda\cC}}_{S^{()}_{\bullet,\dots,\bullet}}{\ \phi_{\cC}} &\cat}\]
that satisfies appropriate basepoint conditions.

At an object $\langle\mb{m}\rangle=([m_1],\dotsc,[m_n])$ in $\E$, the component of this natural transformation is a functor 
\[ \overline{\Lambda\cC}(\fps{m_1},\dots,\fps{m_n})\to S^{(n)}_{m_1,\dots,m_n}\cC.\]

This means that to prove \cref{thrm:nattransmultifunctors}, we must do the  following:
\begin{outline}\label{proofoutline}
\begin{enumerate}
\item For every $n\geq 0$ and every $n$-tuple of objects of $\Delta^\op$, we must construct a functor
\[(\phi_{\cC})_{\langle \mb{m}\rangle}\colon \overline{\Lambda\cC}(\fps{m_1},\dots,\fps{m_n})\to S^{(n)}_{m_1,\dots,m_n}\cC.\]
\item We must verify that these functors are natural with respect to maps in $\E$, showing that $\phi_{\cC}$ is a map of functors from $\E$ to $\cat$.
\item We must verify that the basepoint condition in \cref{defn:Ecat} holds, so that each $\phi_{\cC}$ is a $1$-ary morphism in the multicategory $\Ecat$.
\item We must verify that the maps $\phi_{\cC}$ fit together to form a natural transformation of multifunctors from $\mcdewald$ to $\Ecat$.  That is, the components of $\phi$ satisfy the condition on $k$-ary morphisms of \cref{defn:multinaturaltrans}.
\item We must verify that this natural transformation of multifunctors respects the $\cat$-enrichment enjoyed by these multicategories.  That is, the components of $\phi$ satisfy the condition on $k$-ary cells of \cref{defn:multinaturaltrans}.
\end{enumerate}
\end{outline}
We prove each of these statements in turn.  First, the heart of the matter: the actual construction of the functor in \cref{proofoutline} (1).  The underlying construction is essentially a multidimensional version of Waldhausen's construction in \cite[Section 1.8]{Wald}.  Before giving the full construction, let us recall the idea of  Waldhausen's 1-dimensional version.
\begin{recoll}\label{recollect1dim}
For a Waldhausen category $\cC$ and $[m]\in \Delta^\op$, Waldhausen \cite[Section 1.8]{Wald} discusses a functor from Segal's version of $\overline{\Lambda\cC}(\underline{m}_*)$ to $S_{m}\cC$. The idea of this functor is as follows.  An object in $\overline{\Lambda\cC}(\underline{m}_*)$ boils down to the data of an $m$-tuple $\{C_i\}$ of objects of $\cC$, one for each non-basepoint element of the set $\underline{m}_*$, together with the maps $\rho$ that indicate how to glue these together to get the objects corresponding to larger subsets of $\underline{m}_*$.  From this data, Waldhausen produces a  functor $A\colon \Ar[m]\to\cC$ which is usually displayed as a ``flag''
\[\xymatrix{
\ast\ar[r] & C_1 \ar[r]^-{\iota}\ar[d]& C_1\vee C_2 \ar[r]^-{\iota}\ar[d]_{\pi}& \ \cdots\ \ar[r]^-{\iota} & C_1\vee\dots\vee C_m\ar[d]_-{\pi}\\
& \ast \ar[r]& C_2 \ar[r]^-{\iota}\ar[d]_-{\pi}&\ \cdots\ \ar[r]^-{\iota} & C_2\vee \dots\vee C_m\ar[d]_-{\pi}\\
&&\ast&\ddots&\vdots\ar[d]_-{\pi}\\
&&&&C_m
}
\]
where the horizontal arrows are the inclusions into the corresponding wedge products, hence are cofibrations, and all the squares are pushout squares. Thus this gives an object of $S_{m}\cC$.
\end{recoll}

The following construction of the functor in  \cref{proofoutline} (1) is the multidimensional version of \cref{recollect1dim}
\begin{const}\label{constructBMobjfromEMobj}
Let $(\cC,\omega)$ be a Waldhausen category with wedges.  Fix $n$ and fix an $n$-tuple $\langle \mb{m}\rangle=([m_1],\dots,[m_n])$ of objects in $\Delta^\op$.  Let $\{C_{\langle S\rangle},\rho_{\langle S\rangle,i, T,U}\}$ be an object in $\overline{\Lambda \cC}(\fps{m_1},\dots,\fps{m_n})$.  From this object, we build a functor
\[A\colon \Ar[m_1,\dotsc, m_n]\to \cC.\]

For numbers $0\leq i\leq j\leq m$, let $(i,j]$ be the set $\{i+1, i+2, \dotsc, j\}$. This is a subset of $\fps{m}$ that does not contain the basepoint.  Note that if $i=j$, this is the empty set; if $i+1=j$, this is the one element set $\{j\}$.

Given an object $i_1j_1,\dots,i_nj_n$ in $\Ar[m_1,\dots,m_n]$, set the value of $A$ at this object to be
\[A_{i_1j_1,\dotsc, i_nj_n}=C_{\langle(i_1,j_1],\dotsc, (i_n,j_n]\rangle}.\]

Morphisms in $\Ar[m_1]\times\dotsb\times\Ar[m_n]$ can be factored as composites of morphisms in each component.  We explicitly define the value of the functor $A\colon \Ar[m_1,\dotsc,m_n]\to \cC$ on a morphism $i_kj_k\to i_k'j_k'$ in component $k$ to be the following composite.  For clarity, we only indicate the set in the $k$th component in the definition below, so that $C_{(i_k,j_k]}$ is shorthand for $C_{\langle(i_1,j_1],\dotsc,(i_n,j_m]\rangle}$.
\[\xymatrixcolsep{1pc}
\xymatrix{ C_{(i_k,j_k]}\ar[r]^-{\iota_1} & C_{(i_k,j_k]}\vee C_{(j_k,j_k']} \ar[r]^-{\rho}& C_{(i_k,j_k']}\ar[d]_{\rho^\inv} \\ && C_{(i_k,i_k']}\vee C_{(i_k',j_k']}\ar[d]_{\pi_2}\\ & & C_{(i_k',j_k']}
}
\]
As in \cref{const:wald_to_smc}, the horizontal map $\iota_1$ is the inclusion of a wedge summand and the vertical map $\pi_2$ is the projection, which exist by the universal property of pushout over the zero object.  The maps $\rho$ and $\rho^\inv$ are particular cases of the maps $\rho_{\langle S\rangle, i, T, U}$ that are part of the structure of an object of $\overline{\Lambda\cC}(\fps{m_1},\dotsc,\fps{m_n})$; we have omitted the indices because they are deducible from context, but it is worth noting that the map labeled $\rho$ and the map labeled $\rho^\inv$ have different indices and are in general not inverses of each other.

This defines $A$ on all morphisms as long as we can show that different factorizations of a morphism in $\Ar[m_1,\dotsc,m_n]$ into component morphisms yield the same definition of $A$ on the composite.  In other words, we must check that $A$ applied to a map in component $k$ and then a map in component $l$ is the same as $A$ applied to the map in component $l$ and then the map in component $k$.  The commutativity of the necessary diagram
\[\xymatrixcolsep{1pc}
\xymatrix{
C_{\langle(i_1,j_1],\dotsc,(i_n,j_n]\rangle} \ar[r]\ar[d]& C_{\langle(i_1,j_1],\dotsc,(i_k',j_k'],\dotsc,(i_n,j_n]\rangle} \ar[d]\\
C_{\langle(i_1,j_1],\dotsc,(i_l',j_l'],\dotsc,(i_n,j_n]\rangle}\ar[r] & C_{\langle(i_1,j_1],\dotsc,(i_k',j_k'],\dotsc,(i_l',j_l'],\dotsc, (i_n,j_n]\rangle}
}
\]
follows from the coherence condition (5) of \cref{EMdefn}, which relates the maps $\rho$ for varying indices $k$. 
\end{const}

\begin{prop}  The functor $A\colon \Ar[m_1,\dotsc,m_n]\to \cC$ defined in \cref{constructBMobjfromEMobj} is an object of the category $S^{(n)}_{m_1,\dotsc,m_n}\cC$.
\end{prop}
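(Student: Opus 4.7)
My plan is to verify the three conditions of \cref{BM:iteratedSdotdefn} in turn, organized around a canonical wedge decomposition of the values of $A$. By iterating the isomorphisms $\rho$ of the given object $\{C,\rho\}$ of $\overline{\Lambda\cC}(\fps{m_1},\dots,\fps{m_n})$ in each coordinate separately, and invoking the coherence axiom (5) of \cref{EMdefn} to see that the result is independent of the order of iteration, one obtains for every index a canonical isomorphism
\[
A_{i_1j_1,\dots,i_nj_n} \;\cong\; \bigvee_{(a_1,\dots,a_n)\,\in\,\prod_k (i_k,j_k]} C_{\langle\{a_1\},\dots,\{a_n\}\rangle}.
\]
Under this identification, the structure maps defined in \cref{constructBMobjfromEMobj} act on the product index $\prod_k(i_k,j_k]$ by sub-wedge inclusion in the $j_k$-direction (via $\iota_1$) composed with projection onto a sub-wedge in the $i_k$-direction (via $\pi_2$).

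Condition (1) is then immediate: if $i_k=j_k$ then $(i_k,j_k]=\emptyset$, and Pointedness axiom (1) of \cref{EMdefn} forces $A_{i_1j_1,\dots,i_nj_n}=C_{\langle\dots,\emptyset,\dots\rangle}=e=\ast$. For condition (3), the bottom-left corner of the required square is $\ast$ for exactly the same reason ($(j_k,j_k]=\emptyset$). Applying $\rho^{-1}$ in the $k$th coordinate to rewrite the top-right as $C_{\dots,(i_k,j_k],\dots}\vee C_{\dots,(j_k,r],\dots}$, the top edge becomes $\iota_1$ and the right edge becomes $\pi_2$, so the square is precisely the defining pushout of the wedge in the Waldhausen category.

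Condition (2), cubical cofibrancy of $C(j_1,\dots,j_n)=A_{0j_1,\dots,0j_n}$, is the main technical obstacle. Because $i_k=0$ throughout, each structure map of this cube collapses to a composite $\iota_1\circ\rho$, and part (1) of the cubical cofibrancy definition follows from the observation that $\iota_1\colon X\to X\vee Y$ is a pushout of $\ast\hookrightarrow Y$ along $\ast\to X$, hence a cofibration by axioms (2)--(3) of \cref{defn:Wald_cat}. For the higher-dimensional conditions of cubical cofibrancy, I would induct on the dimension of the subcube, showing at each step that the pushout of a wedge-shaped diagram over $\ast$ is again a wedge, and hence that the pushout of any subcube missing its terminal vertex is identified with the sub-wedge of the terminal vertex that omits only the summand indexed by the ``most-increased'' corner; the induced map to the terminal vertex is then a sub-wedge inclusion, hence a cofibration. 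The subtle point is verifying that the $\rho$-decompositions in different coordinates agree coherently, which ultimately reduces to axiom (5) of \cref{EMdefn}, and this is where the bulk of the bookkeeping resides.
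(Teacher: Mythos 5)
Your proposal is correct and follows essentially the same route as the paper: conditions (1) and (3) are handled identically, and cubical cofibrancy is reduced to the same three facts, namely that $\iota_1$ is a cofibration, that pushouts of wedges over a common sub-wedge are again wedges, and that the coherence of the $\rho$'s makes the decompositions in different coordinates compatible. The only difference is organizational: you decompose each $A_{i_1j_1,\dotsc,i_nj_n}$ all the way into singleton summands indexed by $\prod_k(i_k,j_k]$, whereas the paper peels off only the newly adjoined elements $\{i_k+1\}$, $\{i_l+1\}$ when verifying the punctured-cube condition, which keeps the bookkeeping slightly lighter (note also that your canonical decomposition needs the associativity axiom (4) in addition to axiom (5), and that what is omitted from the sub-wedge is the collection of summands whose $k$th and $l$th indices are both maximal, not a single summand).
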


\begin{proof}
We must check that $A$ satisfies the three conditions of \cref{BM:iteratedSdotdefn}.  First, if there is some index $k$ so that $i_k=j_k$, the set $(i_k,j_k]$ is empty. Hence $C_{\langle(i_1,j_1],\dotsc,(i_k,j_k],\dotsc,(i_n,j_n]\rangle}$ is the zero object by axiom (1) of \cref{EMdefn}.

We next show the cubical cofibrancy condition of \cref{defn:cubicallycofibrant} holds for the subfunctor $A_{0j_1,\dotsc,0j_n}$.  For all $i_1\leq j_n, \dotsc, i_n\leq j_n$, the map
\[ A_{0i_1,\dotsc,0i_n}\to A_{0j_1,\dotsc,0j_n}\]
is a composite of maps of the form
\[ C_{(0,i_k]}\xto{\iota_1} C_{(0,i_k]}\vee C_{(i_k,j_k]} \xto{\rho} C_{(0,j_k]}. \]
The first map here is the pushout along the cofibration $\ast\to C_{(i_k,j_k]}$ and the second map is by definition an isomorphism; hence their composite is a cofibration.  This proves condition (1) of cubical cofibrancy.

Condition (2) of cubical cofibrancy requires that for all $1\leq k< l\leq n$, the induced map from the pushout of the diagram
\begin{equation}\label{diagramforcubicalcofibpushouts}
\xymatrix{  A_{0i_1,\dotsc,0i_n}\ar[d]\ar[r]& A_{0i_1,\dotsc,0(i_k+1),\dotsc,0i_n}\\ A_{0i_1,\dotsc,0(i_l+1),\dotsc, 0i_n} &&}
\end{equation}
to $A_{0i_1,\dotsc,0(i_k+1),\dotsc, 0(i_l+1),\dotsc,0i_n}$
be a cofibration. 
The structure isomorphisms $\rho$ of an object of $\overline{\Lambda\cC}(\fps{m_1},\dotsc,\fps{m_n})$ allow us to ``split off'' the last element of the sets $(,i_k+1]$ and $(0,i_l+1]$ and thus give an isomorphism between \cref{diagramforcubicalcofibpushouts} and the diagram
\[\xymatrix{C_{\langle S\rangle}\ar[r]^{\iota_1}\ar[d]_{\iota_1} &C_{\langle S\rangle}\vee C_{\langle S\lceil_k\{i_k+1\}\rangle}\\
{\hphantom{{}_{\lceil_l\{i_l+l\,\}}}C_{\langle S\rangle}\vee C_{\langle S\lceil_l\{i_l+1\}\rangle}}
}
\]
where $\langle S\rangle$ denotes the tuple $\langle (0,i_1],\dots,(0,i_n]\rangle$.
This is a diagram of the form $A\vee B_1\leftarrow A\to A\vee B_2$, 
so by inspection, a pushout of this diagram is 
\[ \left(C_{\langle S\rangle}\vee C_{\langle S\lceil_k\{i_k+1\}\rangle}\right)\vee C_{\langle S\lceil_l\{i_l+1\}\rangle}.\]
The structure isomorphisms $\rho$ combine to give an isomorphism between $C_{\langle S\lceil_k(0,i_k+1]\lceil_l(0,i_l+1]\rangle}$ and 
\[C_{\langle S\rangle}\vee
 C_{\langle S\lceil_k\{i_k+1\}\rangle}\vee C_{\langle S\lceil_l\{i_l+1\}\rangle}\vee C_{\langle S \lceil_k\{i_k+1\}\lceil_l\{i_l+1\}\rangle}.\]
The compatibility of the various $\rho$'s and the definition of the inducing map mean that induced map from the pushout of \cref{diagramforcubicalcofibpushouts} to this object is, up to isomorphism, the inclusion of a wedge summand, and therefore a cofibration.  A similar argument for higher dimensional cubes shows that the last condition of \cref{defn:cubicallycofibrant} also holds.

To complete the proof that \cref{constructBMobjfromEMobj} produces an object of $S^{(n)}_{m_1,\dotsc,m_n}\cC$, we observe that condition (3) of \cref{BM:iteratedSdotdefn} holds. For $0\leq i_k\leq j_k\leq r\leq m_k$, unpacking the definitions in the diagram of the form
\[\xymatrix{ 
A_{i_1j_1,\dotsc,i_kj_k,\dotsc i_nj_n}\ar[r]\ar[d] &A_{i_1j_1,\dotsc,i_kr,\dotsc,i_nj_n}\ar[d]\\
A_{i_1j_1,\dotsc,j_kj_k,\dotsc i_nj_n} \ar[r] &A_{i_1j_1,\dotsc,j_kr,\dotsc,i_nj_n}
}
\]
shows that it is isomorphic to the diagram
\[
\xymatrix{
C_{(i_k,j_k]} \ar[r]^-{\iota_1}\ar[d]& C_{(i_k,j_k]}\vee C_{(j_k,r]}\ar[d]^{\pi_2} \\
\ast\ar[r] & C_{(j_k,r]}
}
\]
where as before we have only indicated the sets in the $k$th coordinate.
The universal property defining the wedge product $\vee$ shows that this is a pushout diagram. 
\end{proof}

We have thus proved that \cref{constructBMobjfromEMobj} produces an object of $S^{(n)}_{m_1\dots,m_n}\cC$ from an object of $\overline{\Lambda \cC}(\fps{m_1},\dots,\fps{m_n})$.  To complete \cref{proofoutline} (1), we must observe that this assignment is actually a functor.  A morphism 
\[f\colon \{C,\rho\}\to \{C',\rho'\}\]
in $\overline{\Lambda \cC}(\fps{m_1},\dots,\fps{m_n})$ consists of a system of maps $\{f_{\langle S\rangle}\colon C_{\langle S\rangle}\to C'_{\langle S\rangle}\}$ that commute appropriately with the $\rho$'s and $\rho'$'s.  Given such maps, we must produce a natural transformation of functors 
\[\xymatrixcolsep{0pc} \xymatrix{ **[l]\Ar[m_1,\dots,m_n] \rtwocell<4>^{A}_{A'} &\cC .}\]
At each object $i_1j_1,\dots,i_nj_n$, this transformation has component
\[ f_{\langle(i_1,j_1],\dots,(i_n,j_n]\rangle}\colon C_{\langle (i_1, j_1],\dots,(i_n,j_n]\rangle}\to C'_{\langle (i_1,j_1],\dots,(i_n,j_n]\rangle}.\]
The fact that these are compatible with maps in $\Ar[m_1,\dots,m_n]$ follows from the fact that the $f$'s are compatible with the $\rho$'s as well as the naturality of the inclusion maps $\iota_1$ and the projection $\pi_2$ as in \cref{iotanatural} and \cref{pinatural}.  Hence \cref{constructBMobjfromEMobj} produces a functor 
\[(\phi_{\cC})_{\langle \mb{m}\rangle}\colon\overline{\Lambda \cC}(\fps{m_1},\dots,\fps{m_n})\to S^{(n)}_{m_1,\dots,m_n}\cC.\]

The next step in \cref{proofoutline} is to show (2):  that the functors $(\phi_{\cC})_{\langle\mb{m}\rangle}$ are natural with respect to maps in $\E$.
\begin{prop}\label{prop:naturalityinE}
For each $(\cC,\omega)$ in $\mcdewald$, the functors $(\phi_{\cC})_{\langle \mb{m}\rangle}$ are the components of a natural transformation 
\[ \xymatrixcolsep{1.5cm}\xymatrix{  \E \rtwocell<5>^{\overline{\Lambda\cC}}_{S^{()}_{\bullet,\dots,\bullet}\cC}{\ \,\phi_{\cC}} &\cat}.\]
\end{prop}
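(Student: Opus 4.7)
The plan is to exploit \cref{factorE}, which tells us that morphisms in $\E$ are generated under composition by three classes: the ordered inclusions $(\inc_r,\id)$, the permutations $(\sigma,\id)$, and the ``multisimplicial direction'' maps $(\id,\langle\hat{\beta}\rangle)$. Since naturality squares compose, it suffices to verify naturality of the family $(\phi_{\cC})_{\langle\mb{m}\rangle}$ on each of these three generating classes.

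The first two classes are straightforward. For permutations $(\sigma,\id)$, both $\overline{\Lambda\cC}(\sigma,\id)$ and $S^{()}_{\bullet,\dotsc,\bullet}\cC(\sigma,\id)$ act by permuting the $n$ coordinates of the input, and the formula $A_{i_1j_1,\dotsc,i_nj_n}=C_{\langle(i_1,j_1],\dotsc,(i_n,j_n]\rangle}$ in \cref{constructBMobjfromEMobj} is manifestly symmetric in the coordinates, so naturality is immediate. For an inclusion $(\inc_r,\id)$, I would compare the extension functor of \cref{lemma:smcKextensionfunctor} with the extension isomorphism of \cref{extension}: both send the ``trivial'' entries (indexed by $S_{n+1}=\emptyset$ on one side or last coordinate $00$ or $11$ on the other) to the zero object, and both copy the given data on the ``meaningful'' entries (last coordinate $\{1\}$ and $01$ respectively). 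Tracing definitions, the value of $(\phi_{\cC})_{(\langle\mb{m}\rangle,[1])}\circ e$ at $(i_1j_1,\dotsc,i_nj_n,01)$ is $C_{\langle(i_1,j_1],\dotsc,(i_n,j_n]\rangle}$, agreeing with $e\circ(\phi_{\cC})_{\langle\mb{m}\rangle}$ on objects and, by inspection, on morphisms as well.

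The heart of the argument is the multisimplicial case $(\id,\langle\hat{\beta}\rangle)$. By decomposing $\langle\hat{\beta}\rangle$ coordinatewise, I reduce to a map $\hat{\beta}$ in a single coordinate $k$, corresponding to $\beta\colon[m'_k]\to[m_k]$ in $\Delta$. The key observation is the combinatorial identity
\[(\beta^*)^{-1}\!\left((i'_k,j'_k]\right)=(\beta(i'_k),\beta(j'_k)]\]
as subsets of $\fps{m_k}\setminus\{0\}$, which follows directly from the description of $\beta^*$ in \cref{defn:simplicial_circle}: indeed $\beta^*(i)=j$ exactly when $i\in(\beta(j-1),\beta(j)]$, so $\beta^*(i)\in(i'_k,j'_k]$ iff $i\in(\beta(i'_k),\beta(j'_k)]$. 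Consequently, the value of $\overline{\Lambda\cC}(\id,\hat{\beta})(\{C,\rho\})$ at the tuple with $k$th subset $(i'_k,j'_k]$ is $C_{\langle\dotsc,(\beta(i'_k),\beta(j'_k)],\dotsc\rangle}$, which matches the value of $A=(\phi_{\cC})_{\langle\mb{m}\rangle}(\{C,\rho\})$ at $(\dotsc,\beta(i'_k)\beta(j'_k),\dotsc)$---i.e.\ the restriction of $A$ along the induced functor $\Ar[m'_k]\to\Ar[m_k]$, which is precisely the image of $(\id,\hat{\beta})$ under $S^{()}_{\bullet,\dotsc,\bullet}\cC$.

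The main obstacle will be the analogous verification on morphisms in $\Ar[m'_1,\dotsc,m'_n]$, where one must track the interaction of $\beta^*$ with the structural maps appearing in the definition of $A$ on morphisms---namely the inclusions $\iota_1$, projections $\pi_2$, and the isomorphisms $\rho,\rho^{-1}$. Concretely, one checks that a morphism $i'_kj'_k\to i''_kj''_k$ in the $k$th coordinate of $\Ar[m'_k]$, which under $(\phi_{\cC})_{\langle\mb{m'}\rangle}$ is sent to a composite $\iota_1\circ\rho\circ\rho^{-1}\circ\pi_2$ involving the sets $(i'_k,j'_k]$, $(j'_k,j''_k]$, $(i'_k,i''_k]$, and $(i''_k,j''_k]$, is matched with the corresponding composite for the sets $(\beta(i'_k),\beta(j'_k)]$, $(\beta(j'_k),\beta(j''_k)]$, etc.\ obtained after applying $\beta^*$. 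This follows from the associativity and coherence axioms for the $\rho$'s in \cref{EMdefn} together with the naturality of $\iota_1$ and $\pi_2$ in \cref{iotanatural,pinatural}, but the bookkeeping is unavoidable. With the preimage identity above in hand the verification is diagram-by-diagram routine, and finally naturality of $(\phi_{\cC})_{\langle\mb{m}\rangle}$ in the input object $\{C,\rho\}$ of $\overline{\Lambda\cC}(\fps{m_1},\dotsc,\fps{m_n})$ is inherited immediately from the compatibility of the morphisms $f_{\langle S\rangle}$ with the $\rho$'s, completing the proof.
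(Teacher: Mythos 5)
Your proposal is correct and follows essentially the same route as the paper: reduce to the three generating classes of morphisms in $\E$ from \cref{factorE}, match the two extension functors for $(\inc_r,\id)$, use symmetry of the formula for permutations, and for $(\id,\langle\hat{\beta}\rangle)$ establish the preimage identity $(\beta^*)^{-1}(i,j]=(\beta(i),\beta(j)]$ to compare the two composites on objects, then trace morphisms. The only difference is that you spell out the morphism-level bookkeeping (via the coherence of the $\rho$'s and naturality of $\iota_1,\pi_2$) slightly more explicitly than the paper, which simply asserts it.
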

\begin{proof} 
A morphism $([m_1],\dots,[m_r])\to ([n_1],\dots,[n_s])$ in $\E$ is given by a pair $(q,\langle\hat{\beta}\rangle)$ where $q\colon \underline{r}\to \underline{s}$ is a morphism in $\Inj$ and $\langle\hat{\beta}\rangle\colon q_*([m_1],\dots,[m_r])\to ([n_1],\dots,[n_s])$ is a morphism in $(\Delta^\op)^s$.  We must show that the diagram of categories
\begin{equation}\label{naturalityinE}
\xymatrixcolsep{2.5pc}
\xymatrix{\overline{\Lambda \cC}(\fps{m_1},\dots,\fps{m_r})\ar[r]^{\overline{\Lambda\cC}(q,\langle\hat{\beta}\rangle)}\ar[d]_{\phi_{\cC}} &  \overline{\Lambda \cC}(\fps{n_1},\dots,\fps{n_s})\ar[d]^{\phi_{\cC}}\\
S^{(r)}_{m_1,\dots,m_r}\cC \ar[r]_{S^{()}_{\bullet,\dots,\bullet}(q,\langle\hat{\beta}\rangle)}& S^{(s)}_{n_1,\dots,n_s}\cC
}
\end{equation} 
commutes. 

As mentioned in \cref{factorE}, any map $(q,\langle\hat{\beta}\rangle)$ in $\E$ factors as a composite of maps of three types: $(\inc_r,\id)$, where $\inc_r$ is the inclusion of $\underline{r}$ into $\underline{r+1}$ as the first $r$ elements; $(\sigma,\id)$ where $\sigma\colon \underline{r}\to \underline{r}$ is a permutation, and $(\id,\langle\hat{\beta}\rangle)$ where $\langle\hat{\beta}\rangle\colon [m_1]\times \dots\times [m_r]\to [n_1]\times \dots\times [n_r]$ is a morphism in $(\Delta^\op)^r$.
To show that the $\phi$'s are natural with respect to maps in $\E$, it suffices to consider each of these types of maps in $\E$ separately.

The simplest case to understand is that of a permutation $(\sigma,\id)$.  Since $\sigma$ acts by permuting the $r$-tuple both in the $\overline{\Lambda \cC}$ part and in the $S^{()}_{\bullet,\dots,\bullet}$ part, \cref{naturalityinE} commutes when $(q,\langle\hat{\beta}\rangle)=(\sigma,\id)$. More explicitly, we can view the $K$-theory constructions of \cref{prop:BMisiteratedSdot} and \cref{EMdefn}  as producing functions
\[ \Ob(\Delta^\op)^r \xto{\overline{\Lambda \cC}} \cat \text{\qquad and\qquad} \Ob(\Delta^\op)^r\xto{S^{(r)}_{\bullet,\dots,\bullet}\cC} \cat.\]
For either construction, the action of $\Sigma_r$ is given by first permuting the $r$ factors in $\Ob(\Delta^\op)^r$, from which one can readily verify the necessary commutativity.

We next consider a morphism of the form $(\inc_r,\id)$. By construction, the map across the top of Diagram \eqref{naturalityinE} is the extension functor of \cref{lemma:smcKextensionfunctor} and the map across the bottom of Diagram \eqref{naturalityinE} is the extension functor of \cref{extension}. A  straightforward check shows that Diagram \eqref{naturalityinE} commutes for $(\inc_r,\id)$.

Finally, consider a morphism of the form $(\id, \langle\hat{\beta}\rangle)$ where $\langle\hat{\beta}\rangle$ is a morphism in $(\Delta^\op)^r$.  Since $(\Delta^\op)^r$ is a product, $\langle\hat{\beta}\rangle=(\hat{\beta}_1,\dots,\hat{\beta}_r)$, and it suffices to consider the case where there is only one value of $i$ such that $\hat{\beta}_i$ is not the identity.

Let $\hat{\beta}\colon [m]\to [n]$ in $\Delta^\op$ be the opposite of $\beta\colon [n]\to [m]$ in $\Delta$.  Recall that the map $\Lambda \cC(\fps{m})\to \Lambda \cC(\fps{n})$ across the top of Diagram (\ref{naturalityinE}) is induced by the map of finite pointed sets $\fps{m}\to \fps{n}$ given by $\beta^*$ in $S^1$,  whereas the map along the bottom of the diagram is given by precomposing with the induced map $\Ar[\beta]\colon \Ar[n]\to \Ar[m]$ of $\beta$ on arrow categories.

Suppose $\{C,\rho\}$ is an object of $\Lambda \cC(\fps{m})$.   Going around the top and right maps of Diagram (\ref{naturalityinE}) sends $\{C,\rho\}$ to the functor $A_1\colon \Ar[n]\to \cC$ with $A_1(ij)=C_{(\beta^*)^\inv(i,j]}$, where again $\beta^*$ is the map in $S^1$.  As observed in  \cref{defn:simplicial_circle}, $\beta^*$ takes $s\in \fps{m}$ to the unique $t\in \fps{n}$ such that $\beta(t-1)<s\leq \beta(t)$.  Hence $(\beta^*)^\inv(i,j]$ is the set 
\[\{t\in [n]\mid \beta(i)<t\leq \beta(j)\}=(\beta(i),\beta(j)].\]
Unpacking the composite around the left and bottom maps of Diagram (\ref{naturalityinE}),  we see that this composite also sends $\{C,\rho\}$ to the functor $A_2\colon \Ar[n]\to \cC$ with value $A_2(ij)=C_{(\beta(i),\beta(j)]}$.

Therefore $A_1,A_2\colon \Ar[n]\to \cC$ are the same on objects, and tracing through the images of the morphisms shows that they are the same functor.  This shows that when $(q,\langle\hat{\beta}\rangle)$ is of the form $(\id,\langle\hat{\beta}\rangle)$, Diagram \eqref{naturalityinE} commutes at the level of objects; since the morphisms in $\overline{\Lambda \cC}(\fps{m_1},\dots,\fps{m_r})$ are systems of compatible morphisms, the diagram commutes on that level as well.

This completes the proof that the functors $\phi_C$ form a natural transformation  of  functors $\E\to \cat_\ast$ as desired.
\end{proof}

We next show step (3) of \cref{proofoutline}: that the natural transformations $\phi_{\cC}$ are actually $1$-ary morphisms in the multicategory $\Ecat$.  
\begin{prop} The natural transformations $\phi_{\cC}$  of \cref{prop:naturalityinE} satisfy the basepoint preservation conditions necessary to be  $1$-ary morphisms in $\Ecat$.  
\end{prop}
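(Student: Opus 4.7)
The plan is to verify two conditions. First, I need to confirm that each component functor $(\phi_\cC)_{\langle\mb{m}\rangle}$ sends the basepoint object of $\overline{\Lambda\cC}(\fps{m_1},\dotsc,\fps{m_n})$ to the basepoint object of $S^{(n)}_{m_1,\dotsc,m_n}\cC$ (and sends the identity on the basepoint to the identity on the basepoint). Second, I need to confirm that when $m_i=0$ for some $i$, the functor $(\phi_\cC)_{\langle\mb{m}\rangle}$ is the unique functor between trivial categories, as demanded by the definition of an $\E_\ast$-category. Since both the source and target of $\phi_\cC$ are already known to be $\E_\ast$-categories (this is built into the construction of $\overline{\Lambda\cC}$ from \cref{EMdefn} and into $S^{()}_{\bullet,\dotsc,\bullet}\cC$ by \cref{BM:iteratedSdotdefn}), the second condition reduces to the observation that any functor $\ast \to \ast$ between terminal categories is trivially based.

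For the first condition, the strategy is to identify the basepoints explicitly and then trace them through \cref{constructBMobjfromEMobj}. The basepoint object of $\overline{\Lambda\cC}(\fps{m_1},\dotsc,\fps{m_n})$ is the constant system $\{C,\rho\}$ in which $C_{\langle S\rangle}=e=\ast$ for every tuple $\langle S\rangle$ (where $e$ is the strict unit of $\Lambda\cC$, which is the zero object $\ast$ of $\cC$) and every structure isomorphism $\rho_{\langle S\rangle,i,T,U}$ is the identity. The basepoint of $S^{(n)}_{m_1,\dotsc,m_n}\cC$ is the constant functor $\Ar[m_1,\dotsc,m_n]\to \cC$ at $\ast$. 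Applying \cref{constructBMobjfromEMobj} to the basepoint system yields the functor $A$ with $A_{i_1j_1,\dotsc,i_nj_n}=C_{\langle(i_1,j_1],\dotsc,(i_n,j_n]\rangle}=\ast$ at every object; on morphisms, every generating composite collapses to the identity on $\ast$ because the factors $\iota_1$, $\rho$, $\rho^{-1}$ and $\pi_2$ all reduce to maps between copies of the zero object. Thus $A$ is precisely the basepoint of $S^{(n)}_{m_1,\dotsc,m_n}\cC$.

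The analogous check for morphisms is immediate: the identity morphism on the basepoint of $\overline{\Lambda\cC}(\fps{m_1},\dotsc,\fps{m_n})$ has every component equal to $\id_\ast$, and the natural transformation produced by \cref{constructBMobjfromEMobj} has components $f_{\langle(i_1,j_1],\dotsc,(i_n,j_n]\rangle}=\id_\ast$ at every object of $\Ar[m_1,\dotsc,m_n]$, which is the identity on the basepoint of $S^{(n)}_{m_1,\dotsc,m_n}\cC$. I do not anticipate any real obstacle here; the argument is entirely bookkeeping, made easy by the strict unit hypothesis which ensures that the unit of $\Lambda\cC$ really is the zero object $\ast$ of $\cC$ so that the two notions of ``basepoint'' coincide on the nose.
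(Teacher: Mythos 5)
Your proposal is correct and follows essentially the same route as the paper: identify the basepoint of $\overline{\Lambda\cC}(\fps{m_1},\dots,\fps{m_n})$ as the constant system at the unit $e=\ast$ (the zero object of $\cC$), observe that \cref{constructBMobjfromEMobj} sends it to the constant functor at $\ast$, which is the basepoint of $S^{(n)}_{m_1,\dots,m_n}\cC$, and note the analogous morphism-level check. Your extra remark about the case $m_i=0$ is harmless but not needed, since that condition is part of being an $\E_\ast$-category rather than part of the $1$-ary morphism basepoint condition.
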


\begin{proof}
As discussed in \cref{sect:Ecat}, in order to be a $1$-ary morphism, for each $\langle\mb{m}\rangle=([m_1],\dots,[m_r])\in \E$, the functor $\phi_{\cC}\colon \ol{\Lambda \cC}(\langle\mb{m}\rangle)\to S^{(r)}_{m_1,\dots,m_r}\cC$ must satisfy an object- and a morphism-level basepoint condition.  On objects, we require that  the functor $\phi_{\cC}$ take the basepoint of the category $\ol{\Lambda \cC}(\langle\mb{m}\rangle)$ to the basepoint of the category $S^{(r)}_{m_1,\dots,m_r}\cC$.  The basepoint in $\ol{\Lambda \cC}(\langle\mb{m}\rangle)$ is the constant system at the unit object $\ast\in\Lambda \cC$, which is the zero object in $\cC$. The map $\phi_\mathcal{C}$ sends this object  to the constant functor at the zero object $\ast\in \cC$, which is the basepoint in $S^{(r)}_{m_1,\dots,m_r}\cC$.  Thus the object-level basepoint condition holds.
On morphisms, an easy check shows that $\phi_{\cC}$ takes the identity morphism on the basepoint to the identity morphism on the basepoint.
\end{proof}

Step (4) in \cref{proofoutline} is to show that the maps $\phi_C$ form a multinatural transformation of multifunctors $\mcdewald \to \Ecat$.  
\begin{prop}\label{prop:multinatural}
The maps $\phi_C$ are multinatural on $k$-ary morphisms.  That is, given a $k$-exact functor $F\colon\cC_1\times \dots \times \cC_k\to \D$ of Waldhausen categories, we have an equality of $k$-ary morphisms
\[\phi_{\D}\circ \ol{\Lambda F} =\sdot F\circ (\phi_{\cC_1},\dots,\phi_{\cC_k})\]
in $\Ecat(\ol{\Lambda \cC_1},\dots,\ol{\Lambda \cC_k};\sdot\D)$.
\end{prop}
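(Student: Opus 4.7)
The plan is to unpack the equation into a statement about natural transformations between functors $\E^k \to \cat_\ast$. A $k$-ary morphism in $\Ecat$ with source $(\overline{\Lambda\cC_1},\dots,\overline{\Lambda\cC_k})$ and target $\sdot\D$ is, by construction, exactly such a natural transformation, so the identity $\phi_{\D}\circ \overline{\Lambda F} = \sdot F\circ (\phi_{\cC_1},\dots,\phi_{\cC_k})$ reduces to checking that, at every tuple $(\langle\mb{m}_1\rangle,\dots,\langle\mb{m}_k\rangle)$ in $\E^k$, the two composite functors
\[\overline{\Lambda\cC_1}(\langle\mb{m}_1\rangle)\times\cdots\times\overline{\Lambda\cC_k}(\langle\mb{m}_k\rangle) \longrightarrow S^{(r)}_{\langle\mb{m}\rangle}\D\]
agree, where $\langle\mb{m}\rangle = \langle\mb{m}_1\rangle \odot \cdots \odot \langle\mb{m}_k\rangle$ and $r=r_1+\cdots+r_k$. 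Naturality in $\E^k$ of the overall transformation then follows at once from the naturality already established for each $\phi_{\cC_i}$ and for $\phi_\D$ in \cref{prop:naturalityinE}.

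Agreement on objects I would verify by tracing both composites. Given a tuple of systems $(\{C^i,\rho^i\})_{i=1}^k$, the description of $\overline{(-)}$ on $k$-linear functors recalled in \cref{thrm:EMmultifunctortoEcat} shows that $\overline{\Lambda F}(\{C^1,\rho^1\},\dots,\{C^k,\rho^k\})$ is the system in $\overline{\Lambda\D}(\langle\mb{m}\rangle)$ whose value at a tuple of subsets $\langle S^1,\dots,S^k\rangle$ is $F(C^1_{\langle S^1\rangle},\dots,C^k_{\langle S^k\rangle})$. Applying $\phi_\D$ via \cref{constructBMobjfromEMobj} then yields the functor $\Ar[m^1_1,\dots,m^k_{r_k}]\to\D$ whose value at an index tuple is $F(C^1_{\langle(i^1_1,j^1_1],\dots\rangle},\dots,C^k_{\langle\dots,(i^k_{r_k},j^k_{r_k}]\rangle})$. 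The other composite first produces the functors $A^i=\phi_{\cC_i}(\{C^i,\rho^i\})$, which carry the same half-open-interval indexing by \cref{constructBMobjfromEMobj}, and then applies $\sdot F$ as $F\circ(A^1\times\cdots\times A^k)$, giving the same functor on objects.

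For morphisms in $\Ar[m^1_1,\dots,m^k_{r_k}]$, both composites are functors, so by factoring into single-coordinate morphisms it suffices to check agreement on a morphism moving only in coordinate $s=r_1+\cdots+r_{i_0-1}+j$ within block $i_0$. The right-hand composite applies $F$ in the $i_0$ position (with identities in other blocks) to the four-fold composite involving $\iota_1$, $\rho$, $\rho^{-1}$, $\pi_2$ in $\cC_{i_0}$ prescribed in \cref{constructBMobjfromEMobj}. The left-hand composite produces the formally parallel composite built from $\iota_1^\D$, $\sigma$, $\sigma^{-1}$, $\pi_2^\D$ in $\D$, where $\sigma$ is the structural isomorphism of the output system of $\overline{\Lambda F}$; unpacking $\overline{(-)}$ on $k$-linear functors shows $\sigma$ decomposes as $F(\rho)\circ\delta^F_{i_0}$, where $\delta^F_{i_0}$ is the distributivity isomorphism of $\Lambda F$ arising from the multifunctor $\Lambda$ built in \cref{const:wald_to_smc}. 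The two composites agree by the defining property of $\delta^F_{i_0}$: the universal property of the pushout $F(\cdots)\vee_\D F(\cdots)$ chosen by $\omega_\D$ forces $\delta^F_{i_0}\circ\iota_\ell^\D = F(\iota_\ell^{\cC_{i_0}})$ for $\ell=1,2$, and a dual uniqueness argument for the projections $\pi_\ell^\D$ gives $F(\pi_\ell^{\cC_{i_0}})\circ\delta^F_{i_0} = \pi_\ell^\D$. Substituting these two identities collapses the left-hand composite into $F$ applied to the right-hand composite.

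Finally, for morphisms in the source product, a tuple $(f^1,\dots,f^k)$ of morphisms of systems is sent by both sides to the natural transformation of functors $\Ar[m^1_1,\dots,m^k_{r_k}]\to\D$ whose component at an index tuple is $F$ applied componentwise to the values $(f^1_{\langle(i^1_1,j^1_1],\dots\rangle},\dots,f^k_{\langle\dots,(i^k_{r_k},j^k_{r_k}]\rangle})$, since $\overline{\Lambda F}$ on morphisms of systems is defined componentwise through $F$ and $\sdot F$ is likewise defined componentwise on tuples of natural transformations. The main obstacle in the whole argument is the bookkeeping around the distributivity isomorphisms $\delta^F_{i_0}$: these are manufactured in \cref{const:wald_to_smc} precisely as the comparison maps between the chosen pushouts, so the two pushout identities above are built into the construction, and once they are in hand the remainder of the proof reduces to diagrammatic verification that mirrors the argument already given for each $\phi_{\cC_i}$ being a $1$-ary morphism.
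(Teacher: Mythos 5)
Your proposal is correct and follows essentially the same route as the paper: reduce the equality of $k$-ary morphisms to a componentwise check at each object of $\E^k$, then trace both composites on objects using the half-open-interval indexing. The one place you go further is the morphism-level check in $\Ar[m^1_1,\dots,m^k_{r_k}]$, which the paper dismisses as "similarly straightforward"; your use of the pushout identities $\delta^F_{i_0}\circ\iota_\ell^{\D}=F(\iota_\ell)$ and $F(\pi_\ell)\circ\delta^F_{i_0}=\pi_\ell^{\D}$ to collapse the $\D$-side composite into $F$ applied to the $\cC_{i_0}$-side composite is exactly the right verification and is consistent with \cref{const:wald_to_smc}.
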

\begin{proof}
A $k$-ary morphism in $\Ecat$ is a natural transformation satisfying extra conditions.  To prove two $k$-ary morphism are equal, it thus suffices to check that the two composite  natural transformations in the following diagram are equal:
\begin{equation}\label{diagram:multinaturality}
\xymatrixcolsep{2cm}\xymatrix{ \overline{\Lambda\cC_1}\times \dots\times \overline{\Lambda\cC_k} \ar[r]^-{\overline{\Lambda F}}\ar[d]_{\phi_{\cC_1}\times\dots\times \phi_{\cC_k}} & \overline{\Lambda\D}\ar[d]^{\phi_{\D}}\\
S^{()}_{\bullet,\dots,\bullet}\cC_1\times \dots\times S^{()}_{\bullet,\dots,\bullet}\cC_k \ar[r]_-{S^{()}_{\bullet,\dots,\bullet} F} &S^{()}_{\bullet,\dots,\bullet}\D
}
\end{equation}
To verify this, we simply check that the components of both composite natural transformations at an object of $\E^k$ are the same.

For $1\leq i\leq k$, let $\langle\mb{m}_i\rangle=([m_{i1}],\dots,[m_{ir_i}])$ be an object in $\E$. Let $\langle\mathbf{m}_1\odot\dots\odot\mathbf{m}_k\rangle\in \E$ be the concatenation of the $\langle\mathbf{m}_i\rangle$'s and let $r=\sum_{i}r_i$.  We must show that two composite functors are the same in the following diagram of categories:
\[
\xymatrix{\overline{\Lambda \cC_1}(\langle\mb{m}_1\rangle)\times \dots\times \overline{\Lambda \cC_k}(\langle\mb{m}_k\rangle) \ar[r]\ar[d]& \overline{\Lambda \D}(\langle\mb{m}_1\odot\dots\odot\mb{m}_k\rangle)\ar[d] \\
S^{(r_1)}_{m_{11},\dots,m_{1r_1}}\cC_1\times\dots\times S^{(r_k)}_{m_{k1},\dots,m_{kr_k}}\cC_k \ar[r]& S^{(r)}_{m_{11},\dots,m_{kr_k}}\D
}
\]
Here and in what follows we write $\overline{\Lambda \cC_1}(\langle \mb{m}_1\rangle)$ for $\overline{\Lambda\cC_1}(\fps{m_{11}},\dots,\fps{m_{1r_{1}}})$ and so on for space considerations.

Let $(\{C^1, \rho^1\},\dots,\{C^k,\rho^k\})$ be an object in $\overline{\Lambda \cC_1}(\langle\mb{m}_1\rangle)\times \dots\times \overline{\Lambda \cC_k}(\langle\mb{m}_k\rangle)$.  The image of this object under the top right functors in the diagram above is the object $A\colon \Ar[\mathbf{m}_1,\dots,\mathbf{m}_k]\to \cD$ in $S^{(r)}_{m_{11},\dots,m_{kr_k}}\D$ whose value at an $r$-tuple of arrows $(i_{11}j_{11},\dots,i_{kr_k}j_{kr_k})$ is $D_{\langle (i_{11},j_{11}],\dots,(i_{kr_k},j_{kr_k}]\rangle}$ where $\{D,\rho\}$ is the system in $\overline{\Lambda D}$ defined as follows:  Given $\langle T\rangle$, which is a concatenation of lists $\langle S_i\rangle$ of subsets of $\{1,\dots,m_{ir_i}\}$ for $i=1,\dots k$, define 
\[D_{\langle T\rangle}=F(C^1_{\langle S_1\rangle},\dots,C^k_{\langle S_k\rangle}).\]

The image of $(\{C^1, \rho^1\},\dots,\{C^k,\rho^k\})$ around the left and lower maps in the above diagram is the functor $A'\colon \Ar[\mathbf{m}_1,\dots,\mathbf{m}_k]\to \D$ given by the composite
\[ \Ar[\mathbf{m}_1]\times\dots\times \Ar[\mathbf{m}_k] \xto{A_1\times\dots\times A_k} \cC_1\times \dots\times \cC_k\xto{F}\D\]
where the functor $A_\ell\colon \Ar[\mathbf{m}_\ell]\to \cC_i$ evaluated at $(i_{\ell 1}j_{\ell 1},\dots,i_{\ell r_\ell}j_{\ell r_\ell})$ is the object $C^\ell_{\langle(i_{\ell 1},j_{\ell 1}],\dots,(i_{\ell r_\ell},j_{\ell r_\ell }]\rangle}$.
Hence the functors $A$ and $A'$ coincide on objects and it is similarly straightforward to see they are also equal on morphisms.  

A similar check also shows that the necessary diagram commutes on morphisms in the category $\overline{\Lambda \cC_1}(\langle\mb{m}_1\rangle)\times \dots\times \overline{\Lambda \cC_k}(\langle\mb{m}_k\rangle)$.

This shows that $\phi$ is  respects $k$-ary morphisms.
\end{proof}

The final step in \cref{proofoutline} is to show that this multinatural transformation preserves the categorical enrichments; that is, the $k$-ary cells.  Since the objects and morphisms in $\Ecat(X_1,\dots,X_k;Y)$ are natural transformations and modifications, respectively, we must prove the following:

\begin{prop}\label{prop:modificationswork}
Given a natural transformation $\mu$ between $k$-exact functors $F,G\in \mcdewald(\cC_1,\dots,\cC_k;\D)$, the modifications
\[\phi_{\D}\circ \overline{\Lambda \mu}\text{\quad and \quad} S^{()}_{\bullet,\dots,\bullet}\mu \circ(\phi_{\cC_1}\times\dots\times\phi_{\cC_k})\]
 in $\Ecat(\ol{\Lambda \cC_1},\dots,\ol{\Lambda\cC_k}; \sdot \D)$ are equal.
\end{prop}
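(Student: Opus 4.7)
The plan is to reduce the question to an equality of components by repeatedly unpacking what a $k$-ary cell in $\Ecat$ actually amounts to, and then observing that both sides, when fully evaluated, agree on the nose because each traces back to a single instance of the natural transformation $\mu$.

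First I would recall that a $k$-ary cell in $\Ecat$ between $k$-ary morphisms is a modification of functors $\E^k \to \cat_*$, so it is determined by its components at objects of $\E^k$. Thus it suffices to fix a tuple $(\langle\mb{m}_1\rangle,\dots,\langle\mb{m}_k\rangle) \in \E^k$ and check that the two resulting natural transformations between functors
\[\overline{\Lambda\cC_1}(\langle\mb{m}_1\rangle)\times\dots\times\overline{\Lambda\cC_k}(\langle\mb{m}_k\rangle) \to S^{(r)}_{m_{11},\dots,m_{kr_k}}\D\]
(where $r = \sum_i r_i$) agree. Note that by \cref{prop:multinatural} these two natural transformations go between the same pair of functors, so the question of equality of 2-cells is well-posed.

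Next I would evaluate at an object $(\{C^1,\rho^1\},\dots,\{C^k,\rho^k\})$ in the product category. The result on each side is a morphism in $S^{(r)}_{m_{11},\dots,m_{kr_k}}\D$, i.e.\ a natural transformation between two functors $\Ar[\mb{m}_1,\dots,\mb{m}_k] \to \D$, and such a transformation is determined by its components at each object $(i_{11}j_{11},\dots,i_{kr_k}j_{kr_k})$ of the arrow category. Unpacking the left-hand composite $\phi_{\D}\circ \overline{\Lambda \mu}$: the cell $\overline{\Lambda \mu}$ at $(\{C^1,\rho^1\},\dots,\{C^k,\rho^k\})$ is the system whose $\langle T\rangle$-component (for $\langle T\rangle = \langle S_1,\dots,S_k\rangle$) is $\mu_{(C^1_{\langle S_1\rangle},\dots,C^k_{\langle S_k\rangle})}$, and whiskering with $\phi_\D$ reads off the component at $\langle T\rangle = \langle (i_{11},j_{11}],\dots,(i_{kr_k},j_{kr_k}]\rangle$, giving
\[\mu_{(C^1_{\langle (i_{11},j_{11}],\dots\rangle},\,\dots,\,C^k_{\langle \dots,(i_{kr_k},j_{kr_k}]\rangle})}.\]
Unpacking the right-hand composite $\sdot\mu \circ (\phi_{\cC_1}\times\dots\times\phi_{\cC_k})$: first apply the functor $\phi_{\cC_1}\times\dots\times\phi_{\cC_k}$ to obtain the tuple of functors $(A_1,\dots,A_k)$ described in the proof of \cref{prop:multinatural}, then whisker with $\sdot\mu$, whose component at $(i_{11}j_{11},\dots,i_{kr_k}j_{kr_k})$ is, by \cref{thrm:waldkismultifunctor}, exactly $\mu$ evaluated at the tuple $(A_1(i_{11}j_{11},\dots),\dots,A_k(\dots,i_{kr_k}j_{kr_k})) = (C^1_{\langle(i_{11},j_{11}],\dots\rangle},\dots,C^k_{\langle\dots,(i_{kr_k},j_{kr_k}]\rangle})$. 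The two components thus coincide.

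Finally I would observe that naturality in $(\{C^1,\rho^1\},\dots,\{C^k,\rho^k\})$ is automatic since both sides are modifications, and the basepoint condition of \cref{defn:Ecat} for $k$-ary cells holds on both sides because $\mu$ preserves identities on the unit by virtue of being a map of $k$-exact functors (both $F$ and $G$ send tuples containing $\ast$ to $\ast$, and $\mu$'s component there is forced to be $\id_\ast$). The main ``obstacle'' here is really just bookkeeping with the concatenation of multi-indices; there is no substantive homotopical content, as the argument is essentially the 2-cell analog of the 1-cell compatibility already verified in \cref{prop:multinatural}, and the equality holds strictly on the nose.
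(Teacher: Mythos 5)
Your proposal is correct and follows essentially the same route as the paper's proof: reduce the equality of modifications to an equality of components at objects of $\E^k$, and then observe that at each object of $\Ar[\mb{m}_1,\dots,\mb{m}_k]$ both sides are given by the same component of $\mu$. Your version is somewhat more explicit in tracing the multi-indices and in checking the basepoint condition, but there is no substantive difference in strategy.
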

\begin{proof}
To show that these modifications agree, we again just have to check that the components at objects in $\E^k$ agree.  Each of these components is a natural transformation whose components come from the components of the original natural transformation $\mu$.  Let $(\langle\mathbf{m}_1\rangle,\dots,\langle\mathbf{m}_k\rangle)$ be an object in $\E^k$. Then the component of the modification $\phi_D\circ \overline{\Lambda\mu}$ at $(\langle\mathbf{m}_1\rangle,\dots,\langle\mathbf{m}_k\rangle)$ is a natural transformation between functors
\[\xymatrixcolsep{-1pc}
\xymatrix{**[l]S^{(r_1)}_{\mathbf{m}_1} \cC_1\times\dots\times S^{(r_k)}_{\mathbf{m}_k} \cC_k \rtwocell<4>^{<1.5>\phi_\D\circ \overline{\Lambda F}}_{<1.5>\phi_\D\circ \overline{\Lambda G}} & **[r]S^{(r)}_{\mathbf{m}_1,\dots,\mathbf{m}_k}\D
}
\]
and the component of $\sdot\circ (\phi_{\cC_1}\times \dots\times\phi_{\cC_k})$ at $(\langle\mathbf{m}_1\rangle,\dots,\langle\mathbf{m}_k\rangle)$ is a natural transformation of functors
\[
\xymatrix{**[l]S^{(r_1)}_{\mathbf{m}_1} \cC_1\times\dots\times S^{(r_k)}_{\mathbf{m}_k} \cC_k \rtwocell<5>^{<1.5>S^{()}_{\bullet,\dots,\bullet}F\circ(\phi_{\cC_1}\times\dots\times\phi_{\cC_k}) }_{<1.5>S^{()}_{\bullet,\dots,\bullet}G\circ (\phi_{\cC_1}\times \phi_{\cC_k})} &  **[r] S^{(r)}_{\mathbf{m}_1,\dots,\mathbf{m}_k}\D.
}
\]
In both cases, the component of the natural transformation in question at  functors $A_i\colon\Ar[\mathbf{m}_i]\to \cC_i$ is ultimately given by composition with the components of the original natural transformation $\mu\colon F \Rightarrow G$.   Hence the two modifications agree.
\end{proof}

\section{Weak equivalences and the multinatural equivalence}
\label{sect:equivalence}

In Theorem \ref{thrm:nattransmultifunctors}, we constructed a multinatural transformation between the multifunctors $\EMk \circ \Lambda$ and $\waldk$, as functors from Waldhausen categories with choices of wedges to $\Ecat$. However, we have not yet taken into account the weak equivalences nor have we shown that this transformation is an equivalence in reasonable cases.  In this section, we remedy these omissions.

Let $\wecat$ be the category of categories-with-weak-equivalences, that is, the category of pairs $(\cC, w\cC)$ where $\cC$ is a category and $w\cC$ is a subcategory of weak equivalences in $\cC$.  The subcategory  $w\cC$ must at least contain all objects of $\cC$ and might additionally be required to satisfy other properties.  For our purposes, we only require that all isomorphisms are contained in $w\cC$.   Morphisms in $\wecat$ are required to be ``exact'' in the sense of sending weak equivalences to weak equivalences.

By neglect of structure, every Waldhausen category $\cC$ is an object in $\wecat$.  Moreover, as in \cref{BM:iteratedSdotdefn}, for each $([m_1],\dots,[m_r])$ in $\E$, $S^{(r)}_{m_1,\dots,m_r}\cC$ has a subcategory of weak equivalences that consists of natural transformations each of whose components is a weak equivalence.  This makes $S^{(r)}_{m_1,\dots,m_r}\cC$ an object of $\wecat$.  Furthermore, the morphisms in $\E_*$ induce exact functors between the categories $S^{(r)}_{m_1,\dots,m_r}\cC$, so in fact  \cref{thrm:waldkismultifunctor} can be improved to the statement that $\sdot$ is a multifunctor
\[\sdot\colon \mcwald \to \Ewecat.\]
Indeed, the second statement of \cref{thrm:waldkismultifunctor}, that restriction to subcategories of weak equivalences also yields a multifunctor to $\Ecat$, encodes this statement: for each $\cC$, restriction to the subcategories of weak equivalences is given by the composite
\[ \E_*\xto{\sdot\cC}\wecat \xto{w} \cat\]
where $w\colon \wecat\to \cat$ forgets down to the subcategory of weak equivalences.  Making such a composite requires that $\sdot\cC$ take morphisms in $\E$ to exact functors between categories with weak equivalences.

Now we turn to the case of the $K$-theory of a symmetric monoidal category.  Symmetric monoidal categories don't naturally come with a subcategory of weak equivalences (aside from the trivial choice of the \emph{core}, the wide subcategory of all isomorphisms).  This means that the natural landing point for $\EMk$ is $\Ecat$, rather than $\Ewecat$ as in the Waldhausen case. However, it's worth observing that in practice one actually wants to apply $\EMk$ to the core of a symmetric monoidal category, as in \cref{remark:EMktheoryusuallyforisos}, rather than an arbitrary symmetric monoidal category.

Consider $(\cC,\omega)\in \mcdewald$. As in \cref{sect:multifunctorWaldKtheory}, $\Lambda\cC$ is a symmetric monoidal category, but it is more than that.  $\Lambda\cC$ is in fact a symmetric monoidal category that comes equipped with a sub-symmetric monoidal category of weak equivalences $w\Lambda\cC$, given by the restricting to the weak equivalences in $\cC$.  The gluing axioms ensure that $w\Lambda\cC$ is again symmetric monoidal.  One can thus apply $\EMk$ to obtain the $\E_*$-category $\EMk(w\cC)$.  This is the functor whose value at an object $([m_1],\dots,[m_r])\in \E_*$ is the category $\overline{w\Lambda\cC}(\fps{m_1},\dots,\fps{m_r})$ of systems of objects in  $w\Lambda\cC$.

However, one can also obtain this $\E_*$-category as the restriction to weak equivalences of an $\E_*$-category-with-weak-equivalences $\EMk(\cC)$ as follows.  View $(\Lambda(\cC),w\Lambda\cC)$ as an object in $\mcsmcwe$, the multicategory of strictly unital symmetric monoidal categories with unit equipped with a sub-symmetric monoidal category of weak equivalences.  Now the composite $\EMk(\Lambda-)$ takes values not just in $\Ecat$ but in $\Ewecat$.  For each $([m_1],\dots,[m_r])\in \E_*$,   the category$\overline{\Lambda\cC}(\fps{m_1},\dots,\fps{m_r})$ has a natural subcategory of weak equivalences, namely, the maps of systems $\{f\}\colon \{C,\rho\}\to \{C', \rho'\}$ so that all the components $f_{\langle S\rangle}$ are weak equivalences. It is routine to verify again that morphisms in $\E_*$ induce exact functors.  We thus have an $\E_*$-category given by the composite
\begin{equation}\label{weakequivforsmcktheory} w\overline{\Lambda\cC}\colon\E_* \xto{\overline{\Lambda\cC}} \wecat \xto{w} \cat.\end{equation}

The key observation is that since all the maps $\rho$ appearing in the systems of $\overline{\Lambda\cC}(-)$ are isomorphisms and hence weak equivalences, for any object $\langle [m_1],\dots,[m_r]\rangle$ in $\E$ we have an equality
\[ w\overline{\Lambda\cC}(\fps{m_1},\dots,\fps{m_r})=\overline{w\Lambda\cC}(\fps{m_1},\dots,\fps{m_r}).\]
Hence, passing to the subcategory of weak equivalences after taking $K$-theory is the same as taking $K$-theory after passing to the subcategory of weak equivalences.  That is, the functor of \cref{weakequivforsmcktheory} is the $\E_*$-category $\EMk(w\Lambda\cC)$.

In summary, we see that in fact we have the following diagram at the level of multicategories enriched in sets:
\[\xymatrix{\mcdewald\ar[rr]^{\Lambda}\ar[dr]_-{\sdot} \drrtwocell<\omit>{<-1>\phi}&& \mcsmcwe\ar[dl]^{\overline{(-)}}\\
&\Ewecat\ar[d]_{w}&\\
&\Ecat\ar[d]_{\abs{\cdot}\circ N\circ \diag}\\
&\Spec
}
\]

One observes here that our transformation $\phi$ takes weak equivalences in $\overline{\Lambda\cC}(\fps{m_1},\dots,\fps{m_r})$ to weak equivalences in $S^{(r)}_{m_1,\dots,m_r}\cC$ by construction: if $f$ is a map between systems in $\overline{\Lambda\cC}(\fps{m_1},\dots,\fps{m_r})$ all of whose components are weak equivalences, then the map $\phi_{\cC}(f)$ is a natural transformation all of whose components are weak equivalences.

In this situation, we have an equivalence of $K$-theory spectra.

\begin{thrm}\label{thrm:comparisonisequivalence}
Suppose $\cC$ is a Waldhausen category with split cofibrations.  Then $\left|N\circ\diag\circ w\circ \phi_\cC\right|$ is an equivalence of $K$-theory spectra.
\end{thrm}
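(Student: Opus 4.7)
The plan is to prove that $\phi_\cC$ induces a levelwise weak equivalence of the underlying symmetric spectra, which suffices for a stable equivalence of $K$-theory spectra.  After changing enrichment via $N$, at each spectrum level $n \geq 0$ we must show that the map of diagonal simplicial sets
\[
\diag N\bigl(w\overline{\Lambda\cC}(\underbrace{S^1,\dots,S^1}_{n})\bigr) \longrightarrow \diag N\bigl(wS^{(n)}_{\bullet,\dots,\bullet}\cC\bigr)
\]
is a weak equivalence after geometric realization.  By the realization lemma for multisimplicial sets applied $n-1$ times, it suffices to show that for each fixed tuple $(q_2,\dots,q_n)$, the simplicial map in the remaining direction $q_1 \mapsto N(w\phi_\cC)_{q_1,q_2,\dots,q_n}$ is a weak equivalence of simplicial sets.

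For each such fixed tuple, I would identify both sides with one-dimensional $K$-theory constructions applied to the intermediate Waldhausen category $S^{(n-1)}_{q_2,\dots,q_n}\cC$.  The target side is handled by \cref{prop:BMisiteratedSdot}, giving $wS^{(n)}_{q_1,q_2,\dots,q_n}\cC \cong wS_{q_1}(S^{(n-1)}_{q_2,\dots,q_n}\cC)$.  For the source, inspection of \cref{EMdefn} shows that $w\overline{\Lambda\cC}(\fps{q_1},\fps{q_2},\dots,\fps{q_n})$ can be reindexed as $w\overline{\Lambda(S^{(n-1)}_{q_2,\dots,q_n}\cC)}(\fps{q_1})$, with the wedge structure on $S^{(n-1)}_{q_2,\dots,q_n}\cC$ induced from $\omega$ by taking pushouts pointwise in $\Ar[q_2,\dots,q_n]$.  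Under these identifications, $\phi_\cC$ at fixed $(q_2,\dots,q_n)$ corresponds to the one-variable map $\phi_{S^{(n-1)}_{q_2,\dots,q_n}\cC}$ described in \cref{recollect1dim}, sending a system $\{C,\rho\}$ to the flag
\[
\ast \hookrightarrow C_{\{1\}} \hookrightarrow C_{\{1\}} \wed C_{\{2\}} \hookrightarrow \dotsb \hookrightarrow C_{\{1\}} \wed \dotsb \wed C_{\{q_1\}}.
\]

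The base case $n=1$ is then precisely Waldhausen's classical comparison \cite[Section 1.8]{Wald}, which he proves is a weak equivalence under the hypothesis that cofibrations in the underlying Waldhausen category split.  To complete the argument, I would verify that split cofibrations in $\cC$ imply split cofibrations in each $S^{(n-1)}_{q_2,\dots,q_n}\cC$: since a cofibration in the iterated $S_\bullet$-construction is built from componentwise cofibrations in $\cC$ satisfying cubical cofibrancy, and split cofibrations are preserved under pushouts and compatible assembly, vertex-wise splittings for each object of $\Ar[q_2,\dots,q_n]$ can be chosen compatibly with the functoriality of the cofibration data.  The main obstacle is precisely this identification of $\phi_\cC$ at fixed indices with an iterated instance of Waldhausen's map on the intermediate categories, together with the verification that the induced wedge structure on $S^{(n-1)}_{q_2,\dots,q_n}\cC$ matches the structure implicitly used by $\phi_\cC$ when tracing through \cref{constructBMobjfromEMobj}; this requires careful bookkeeping of the $\rho$-structure and the combinatorics of cubically cofibrant functors.
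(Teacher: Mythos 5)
Your overall strategy (prove a levelwise equivalence and reduce, via the realization lemma, to Waldhausen's one-variable comparison) founders on the step you describe as a ``reindexing.'' The category $w\overline{\Lambda\cC}(\fps{q_1},\dots,\fps{q_n})$ is \emph{not} isomorphic to $w\overline{\Lambda(S^{(n-1)}_{q_2,\dots,q_n}\cC)}(\fps{q_1})$: in the first, the data in directions $2,\dots,n$ is indexed by basepoint-free subsets of $\fps{q_2},\dots,\fps{q_n}$ together with splitting isomorphisms $\rho$ (the Segal-type indexing of \cref{EMdefn}), whereas in the second it is indexed by the arrow categories $\Ar[q_2]\times\dots\times\Ar[q_n]$ subject to the filtration and pushout conditions of \cref{BM:iteratedSdotdefn} (the Waldhausen-type indexing). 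Passing from the former to the latter in those $n-1$ directions is precisely the content of the comparison map $\phi_\cC$, which is not an isomorphism and whose being an equivalence is what the theorem asserts; so as written, your identification assumes the conclusion in all but one simplicial direction. A correct induction would have to interleave the two constructions one direction at a time, and would in addition require your final claim---that $S^{(n-1)}_{q_2,\dots,q_n}\cC$ again has split cofibrations---which you assert but do not prove. That inheritance is a genuine subtlety (cofibrations in $S_m\cC$ are natural transformations subject to latching-type conditions, and producing splittings up to weak equivalence compatibly across $\Ar[q_2,\dots,q_n]$ is not automatic), and it is exactly the sort of difficulty the standard argument is designed to avoid.

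The paper's proof sidesteps all of this: both $\waldk\cC$ and $\EMk(\Lambda\cC)$ are almost $\Omega$-spectra, so a map between them is a stable equivalence as soon as it is an equivalence at level $1$; and at level $1$ the map $\left|Nw(\overline{\cC}(S^1_\bullet))\right| \to \left|Nw(S^{(1)}_\bullet\cC)\right|$ is literally Waldhausen's comparison of Segal's construction with the $S_\bullet$-construction, which is an equivalence for split cofibrations by \cite[\S 1.8]{Wald}. If you want the levelwise statement for $n\geq 1$, it then follows formally from connectivity and the almost-$\Omega$ property, rather than by iterating the argument of \cite[\S 1.8]{Wald} across the simplicial directions.
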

\begin{proof}
Since both $K$-theory constructions produce almost $\Omega$-spectra, the map in question, $\left|N\circ\diag\circ w\circ\phi_\cC\right|$, is a map of almost $\Omega$-spectra.  Hence it suffices to show that it is an equivalence at level 1, where we have the map
\[ \left|Nw(\overline{\cC}(S^1_\bullet))\right| \to \left|Nw(S^{(1)}_\bullet\cC)\right|\]
On the right-hand side, we have precisely Waldhausen's original $S_{\bullet}$ construction and on the left, the subcategory of weak equivalences in Segal's original construction.  Thus, this map is precisely the map shown to be an equivalence in \cite[\S 1.8]{Wald} in the case where the cofibrations in $\cC$ are split up to weak equivalence.
\end{proof}

Theorems \ref{thrm:nattransmultifunctors} and \ref{thrm:comparisonisequivalence} yield the following corollaries.  (This is the point of these results!)  For further details, see \cite[\S8--9]{EM2006}.  Recall our convention in \cref{itsalwayssymmetric} that ``multicategory'' means ``symmetric multicategory;'' correspondingly our operads are always symmetric as well.  In particular, any categorical $E_\infty$-operad $\cO$ is an example to which the following corollary applies.

\begin{cor}\label{cor:operadalg} Let $\cO$ be an operad in $\cat$ and let $\A$ be a $\cO$-algebra in Waldhausen categories.   Then there is a map of $\cO$ algebras in $\Spec(\scat)$  \[ \EMk(\Lambda \A)\to \waldk(\A).\]  After changing from the categorical enrichment to the simplicial enrichment, this produces a map of $N\cO$-algebra in spectra
  \[ N\EMk(\Lambda\A)\to N\waldk(\A)\]
  that is an equivalence when $\A$ has split cofibrations.
  \end{cor}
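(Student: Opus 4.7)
The approach is to reformulate the corollary using the standard correspondence between operad algebras and multifunctors, reducing it to a formal consequence of \cref{thrm:nattransmultifunctors} and \cref{thrm:comparisonisequivalence}. The strategy is essentially bookkeeping: once the algebra structure is encoded as a multifunctor out of $\cO$, everything follows from whiskering $\phi$.

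First, I would translate the data. By standard operadic reasoning, an $\cO$-algebra in Waldhausen categories is the same as a categorically enriched multifunctor $\widehat{\A} : \cO \to \mcwald$ (viewing $\cO$ as a one-object multicategory) that sends the unique object to the underlying Waldhausen category $\A$ and that sends $\cO(n)$ into the category $\mcwald(\A,\dots,\A;\A)$ of $n$-exact functors. Choosing any wedges $\omega$ on $\A$ -- which one can do by the remark following \cref{defn:adjective} -- the full and faithful multifunctor $U : \mcdewald \to \mcwald$ then admits a unique lift $\widetilde{\A} : \cO \to \mcdewald$ of $\widehat{\A}$ sending the unique object to $(\A,\omega)$.

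Next, post-composing with the multifunctors $\EMk \circ \Lambda : \mcdewald \to \Spec(\scat)$ and $\waldk \circ U : \mcdewald \to \Spec(\scat)$ produces multifunctors $\cO \to \Spec(\scat)$, which equip $\EMk(\Lambda\A)$ and $\waldk(\A)$ with $\cO$-algebra structures in $\Spec(\scat)$. The multinatural transformation $\phi$ of \cref{thrm:nattransmultifunctors}, composed with the multifunctor $I : \Ecat \to \Spec(\scat)$ of \cref{thrm:multifunctorEcattoSpec}, gives a multinatural transformation between these two multifunctors from $\mcdewald$ to $\Spec(\scat)$. Whiskering this transformation with $\widetilde{\A}$ yields a multinatural transformation between the two composite multifunctors $\cO \to \Spec(\scat)$ -- which, by the same operadic correspondence, is precisely a map $\EMk(\Lambda\A) \to \waldk(\A)$ of $\cO$-algebras in $\Spec(\scat)$ whose underlying 1-ary morphism in $\Spec(\scat)$ is $I(\phi_\A)$.

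For the change of enrichment, the lax symmetric monoidal functor $N$ of \cref{prop:changetosimpenrich} carries $\cO$-algebras in $\Spec(\scat)$ to $N\cO$-algebras in $\Spec$ and preserves morphisms, yielding the desired map $N\EMk(\Lambda\A) \to N\waldk(\A)$ of $N\cO$-algebras. Finally, a map of $N\cO$-algebras is an equivalence if and only if the underlying map of spectra is a stable equivalence, so the equivalence claim reduces immediately to \cref{thrm:comparisonisequivalence} applied to $\A$ under the split-cofibrations hypothesis. The only real subtlety is verifying that the multinaturality condition of \cref{defn:multinaturaltrans} genuinely produces a morphism of $\cO$-algebras after whiskering, but this is immediate once one adopts the multifunctor-out-of-$\cO$ viewpoint on operad algebras; that is where the multicategorical formalism of the paper pays off.
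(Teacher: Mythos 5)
Your proposal is correct and follows essentially the same route the paper intends: the paper treats this corollary as an immediate consequence of Theorems \ref{thrm:nattransmultifunctors} and \ref{thrm:comparisonisequivalence}, deferring to the Elmendorf--Mandell formalism (operad algebras as multifunctors out of a one-object multicategory, with the algebra map obtained by whiskering the multinatural transformation, and equivalences detected on underlying spectra). You have simply written out the bookkeeping the paper leaves implicit, including the harmless lift along the fully faithful, surjective-on-objects $U\colon\mcdewald\to\mcwald$ via a choice of wedges.
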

Here we use \cref{prop:changetosimpenrich} to identify  $N\EMk$ and $N\waldk$ as the usual versions of Elmendorf--Mandell and Waldhausen $K$-theory producing symmetric spectra in simplicial sets.

For example, this corollary implies that when $\A\in \mcwald$ is an $E_\infty$-algebra in $\mcwald$, the two $E_\infty$-ring spectra $\EMk(\Lambda \A)$ and $\waldk(\A)$ are equivalent \emph{as $E_\infty$-ring spectra} when $\A$ has split cofibrations.  

In fact, this corollary follows from a more general result. Recall that \cite{EM2006} defines small multicategories $\mathbf{M}$ that parametrize ring objects, $E_\infty$-objects and modules over ring objects.  They also show that for any such $\mathbf{M}$ there is a simplicial model structure on $\Spec^\mathbf{M}$ such that equivalences are objectwise stable equivalences of spectra \cite[Theorem 1.3]{EM2006}.  Theorems \ref{thrm:nattransmultifunctors} and \ref{thrm:comparisonisequivalence} then immediately imply the following corollary.

\begin{cor}\label{cor:multicategalgebra} Let $\mathbf{M}$ be a small categorically-enriched multicategory and let $\A\colon \mathbf{M}\to \mcwald$ be a multifunctor.   Then there is a commutative diagram 
\[\xymatrix{
\mathbf{M} \ar[r]^-{\A} & \mcwald \rtwocell<5>^{\EMk\circ \Lambda}_{\waldk} & \Spec(\scat_*).
}
\]
After applying the nerve at the level of morphisms to change to simplicial enrichment, we thus have a commutative diagram
\[\xymatrix{
N_\bullet\mathbf{M} \ar[r]^-{\A} & N_\bullet\mcwald \rtwocell<5>^{N\EMk\circ \Lambda}_{N\waldk} & \Spec.
}
\]
that is an equivalence in $\Spec^\mathbf{M}$ when the image of each object in $\A$ has split cofibrations.
\end{cor}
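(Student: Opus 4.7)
The plan is that this corollary follows by whiskering the multinatural transformation of \cref{thrm:nattransmultifunctors} on both sides and then reading off the equivalence statement objectwise from \cref{thrm:comparisonisequivalence}; the real content already lives in those two theorems. A small bookkeeping point I would address first: $\EMk\circ\Lambda$ is defined on $\mcdewald$ rather than $\mcwald$. As explained after \cref{multicatofWaldcats}, the forgetful multifunctor $U\colon \mcdewald\to\mcwald$ is full and faithful and surjective on objects, and any two choices of wedges on a fixed Waldhausen category yield isomorphic objects of $\mcdewald$. I would therefore choose wedges on each object in the image of $\A$ to obtain a lift $\widetilde\A\colon \mathbf{M}\to \mcdewald$ with $U\widetilde\A = \A$; up to canonical isomorphism this lift is independent of the choice, so the composite $\EMk\circ\Lambda\circ\A$ is well-defined as a multifunctor $\mathbf{M}\to\Spec(\scat)$.

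To construct the first diagram, I would start with the multinatural transformation $\phi\colon \overline{(-)}\circ\Lambda \Rightarrow S^{()}_{\bullet,\dots,\bullet}$ of \cref{thrm:nattransmultifunctors}, postcompose with the categorically enriched multifunctor $I\colon \Ecat\to \Spec(\scat)$ from \cref{thrm:multifunctorEcattoSpec}, and precompose with $\widetilde\A$. Whiskering a multinatural transformation by a multifunctor on either side yields a multinatural transformation, as is immediate from \cref{defn:multinaturaltrans}: both the hexagon of that definition and the compatibility of component $1$-ary morphisms with composition pull back and push forward along any multifunctor. This produces the desired categorically enriched diagram with parallel multifunctors $\EMk\circ\Lambda\circ\A$ and $\waldk\circ\A$ and a multinatural transformation between them. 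For the second diagram I would change enrichments via the nerve: by \cref{prop:changetosimpenrich}, $N$ induces a simplicially enriched lax symmetric monoidal multifunctor $\Spec(\scat)\to \Spec$, and applying $N$ at the level of hom-categories to $\mathbf{M}$, $\mcwald$, and the multinatural transformation just constructed turns the previous categorically enriched diagram into the required simplicially enriched one.

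It remains to verify the equivalence claim. By hypothesis, each $\A(b)$ for $b\in \mathbf{M}$ has split cofibrations, so \cref{thrm:comparisonisequivalence} ensures that the component of the multinatural transformation at $b$, namely the map $N\EMk(\Lambda\A(b))\to N\waldk(\A(b))$, is a stable equivalence of symmetric spectra. The model structure on $\Spec^{\mathbf{M}}$ recalled from \cite{EM2006} detects weak equivalences objectwise, so the induced map of $\mathbf{M}$-algebras is an equivalence in $\Spec^{\mathbf{M}}$. There is no genuine obstacle at this stage; the proof is entirely formal given \cref{thrm:nattransmultifunctors,thrm:comparisonisequivalence}, and the only care required is the straightforward verification that whiskering preserves multinaturality together with the existence of the lift of $\A$ through $U$.
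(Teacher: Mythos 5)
Your proposal is correct and takes essentially the same route as the paper, which simply asserts that the corollary follows immediately from \cref{thrm:nattransmultifunctors,thrm:comparisonisequivalence} together with the objectwise detection of equivalences in the model structure on $\Spec^{\mathbf{M}}$ from \cite{EM2006}. Your explicit treatment of the lift of $\A$ through the equivalence $U\colon\mcdewald\to\mcwald$ and the verification that whiskering preserves multinaturality are details the paper leaves implicit, but they do not constitute a different argument.
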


For example, if $\mathbf{M}$ is the multicategory of \cite[Def 2.4]{EM2006} that parametrizes modules over an $E_\infty$-object, then this multinatural transformation is an equivalence of module spectra.

Theorems \ref{thrm:nattransmultifunctors} and \ref{thrm:comparisonisequivalence} also yield equivalences of spectrally-enriched categories constructed from Waldhausen categories via these two approaches. If $\cC$ is a category enriched in $\mcwald$ and $F\colon \mcwald\to \Spec$ is any multifunctor, we denote by $F_\bullet\cC$ the spectrally-enriched category whose objects are the objects of $\cC$ and whose morphism spectra $(F_\bullet\cC)(c,d)$ are given by applying $F$ to the morphism Waldhausen categories in $\cC$: that is,
\[
(F_\bullet\cC)(c,d)=F(\cC(c,d)).
\]
Multifunctoriality of $F$ is precisely the condition needed to give a well-defined composition pairing of spectra in $F_\bullet\cC$.

\begin{cor} Let $\cC$ be a category enriched in $\mcwald$.   Then there is a spectrally-enriched functor 
\[(\EMk\circ \Lambda)_\bullet\cC\to \waldk_{\bullet}\cC\]
that is an equivalence of spectrally enriched categories if each morphism Waldhausen category in $\cC$ has split cofibrations.
\end{cor}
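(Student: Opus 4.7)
The plan is to obtain the spectrally-enriched functor directly from the multinatural transformation $\phi$ of \cref{thrm:nattransmultifunctors}. First, for each ordered pair of objects $c,d\in \Ob\cC$, choose a wedge structure $\omega_{c,d}$ on the Waldhausen category $\cC(c,d)$, so that $(\cC(c,d),\omega_{c,d})$ becomes an object of $\mcdewald$. Because $U\colon \mcdewald\to\mcwald$ is fully faithful on multimorphism categories, the composition $2$-exact functors $\cC(c,d)\times\cC(d,e)\to\cC(c,e)$ automatically lift to $2$-ary morphisms in $\mcdewald$, and the chosen identities $\ast\to \cC(c,c)$ lift to $0$-ary morphisms in $\mcdewald$. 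Thus $\cC$ inherits an enrichment in $\mcdewald$ whose image under $U$ is the given enrichment in $\mcwald$.

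Next, post-compose $\phi$ with the multifunctor $I\colon \Ecat\to\Spec(\scat)$ of \cref{thrm:multifunctorEcattoSpec} and change enrichment from $\cat$ to $\sset$ via the nerve, using \cref{prop:changetosimpenrich}. The result is a multinatural transformation of simplicially-enriched multifunctors $\mcdewald\to\Spec$. Define the spectrally-enriched functor on objects to be the identity, and on hom-spectra to be the component $\phi_{\cC(c,d)}\colon (\EMk\circ\Lambda)(\cC(c,d))\to\waldk(\cC(c,d))$. Compatibility with composition is exactly the multinaturality condition of \cref{defn:multinaturaltrans} applied to the $2$-ary composition morphism of $\cC$ in $\mcdewald$ (compare \cref{prop:multinatural}): the diagram
\[
\xymatrix{
(\EMk\Lambda)(\cC(c,d))\sma(\EMk\Lambda)(\cC(d,e))\ar[r]\ar[d]_{\phi\sma\phi}&(\EMk\Lambda)(\cC(c,e))\ar[d]^{\phi}\\
\waldk(\cC(c,d))\sma\waldk(\cC(d,e))\ar[r]&\waldk(\cC(c,e))
}
\]
commutes strictly. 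Compatibility with units is the analogous condition on $0$-ary morphisms, which picks out the identity object of $\cC(c,c)$ and is preserved by $\phi$ because $\phi$ is a $1$-ary morphism in $\Ecat$ and hence sends basepoints to basepoints.

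The equivalence statement is immediate from \cref{thrm:comparisonisequivalence}: since the spectrally-enriched functor is the identity on objects, it is an equivalence of spectrally-enriched categories precisely when every component $\phi_{\cC(c,d)}$ is a stable equivalence, and this holds under the hypothesis that each $\cC(c,d)$ has split cofibrations.

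The main obstacle is bookkeeping rather than mathematical: the content of the corollary is really that the multinatural structure of $\phi$, formulated for categorically-enriched multifunctors between $\mcdewald$ and $\Ecat$, survives the composite change of enrichment passing through $\Spec(\scat)$ and $N$, and then translates into the on-the-nose compatibility diagrams demanded by the definition of a spectrally-enriched functor. Once this translation is made, the equivalence claim is essentially a pointwise application of \cref{thrm:comparisonisequivalence} and requires no further argument.
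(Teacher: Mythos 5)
Your proposal is correct and follows the same route the paper intends: the corollary is stated as an immediate consequence of the preceding discussion of $F_\bullet\cC$ together with Theorems \ref{thrm:nattransmultifunctors} and \ref{thrm:comparisonisequivalence}, with the functor being the identity on objects, $\phi_{\cC(c,d)}$ on hom-spectra, composition-compatibility coming from multinaturality at the $2$-ary composition morphisms, and the equivalence claim being a pointwise application of \cref{thrm:comparisonisequivalence}. Your explicit lifting of the $\mcwald$-enrichment to $\mcdewald$ via choices of wedges and full faithfulness of $U$ is exactly the bookkeeping step the paper leaves implicit; the only small imprecision is that unit-compatibility is the $k=0$ instance of multinaturality (as you first say) rather than a consequence of basepoint preservation.
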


In the forthcoming \cite{waldmackey}, we use this corollary to show that Mackey functors of Waldhausen categories produce equivariant spectra, analogously to the construction of \cite{BO2015}.

\bibliographystyle{alpha}
\bibliography{references}

\end{document}